\newtheorem*{rep@theorem}{\rep@title}
\newcommand{\newreptheorem}[2]{%
\newenvironment{rep#1}[1]{%
 \def\rep@title{#2 \ref{##1}}%
 \begin{rep@theorem}}%
 {\end{rep@theorem}}}
\newtheorem{theorem}{Theorem}[section]
\newtheorem{lemma}[theorem]{Lemma}
\newtheorem{definition}[theorem]{Definition}
\newtheorem{definitions}[theorem]{Definitions}
\newtheorem{proposition}[theorem]{Proposition}
\newtheorem{corollary}[theorem]{Corollary}
\newtheorem{conjt}[theorem]{Conjecture}
\newtheorem{proj}[theorem]{Project}
\newtheorem{projs}[theorem]{Projects}
\newtheorem{remark}[theorem]{Remark}
\newtheorem{remarks}[theorem]{Remarks}
\title[Pentagonal Tilings]{Perimeter-minimizing Tilings by Convex and Non-convex Pentagons} %title of the paper
\author[W. Ghang]{Whan Ghang}
\author[Z. Martin]{Zane Martin}
\author[S. Waruhiu]{Steven Waruhiu}
\begin{document}
\maketitle

\begin{abstract}
We study the presumably unnecessary convexity hypothesis in the theorem of Chung et al. \cite{pen11} on perimeter-minimizing planar tilings by convex pentagons. We prove that the theorem holds without the convexity hypothesis in certain special cases, and we offer direction for further research. 
\end{abstract}

%%%%%%%%%%%%%%%%%%%%%%%%
%%%%%%%%%%%%%%%%%%%%%%%% INTRODUCTION
%%%%%%%%%%%%%%%%%%%%%%%%

\section{Introduction}
\label{intro} %label is used as a reference
 
\subsection{Tilings of the plane by pentagons}
Chung et al. \cite[Thm. 3.5]{pen11} proved that certain ``Cairo'' and ``Prismatic'' pentagons provide least-perimeter tilings by (mixtures of) convex pentagons, and they conjecture that the restriction to convex pentagons is unnecessary. In this paper we consider tilings by mixtures of convex and non-convex pentagons, and we prove that under certain conditions the convexity hypothesis in the results of Chung et al. can in fact be removed. The conjecture remains open. 

\begin{figure}
	\centering
	\includegraphics[scale=.4]{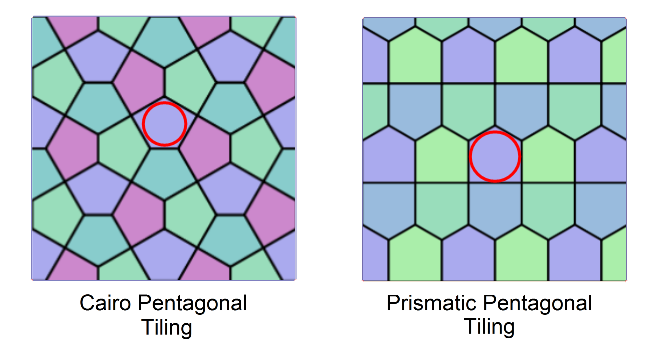}
		\caption{Tilings by Cairo and Prismatic pentagons provide least-perimeter tilings by unit-area convex pentagons. Can the convexity hypothesis be removed?}
	\label{fig:cairoprismatic}
\end{figure}

Throughout the paper, we assume all tilings are unit area and edge-to-edge. We focus on tilings of flat tori, although Section 5 begins the extension of our results to the plane by limit arguments. Our main results are Theorems \ref{oneeffn&mTorus} and \ref{dihedraltype2n&mTorus}. The first shows that tilings by an efficient pentagon and non-convex quadrilaterals cannot have less perimeter than Cairo or Prismatic tilings. The second shows that dihedral tilings by efficient pentagons and Type 2 non-convex pentagons cannot have less perimeter than Cairo or Prismatic tilings. 

The general strategy employed in our main results begins with the assumption that a mixed tiling with convex pentagons and non-convex pentagons (or in Section 3, quadrilaterals) exists that has less perimeter than a Cairo or Prismatic tiling. The first step in the proof is to show that such tilings must have at least one degree-four efficient vertex (Props. \ref{gendihedralSomeDeg4Verts} and \ref{type2dihedralSomeDeg4Verts}).

We then obtain a large lower bound on the ratio of  efficient to non-convex pentagons (or quadrilaterals in Section 3; Props. \ref{generalQuadRatio} and \ref{type2-ratio}). This is primarily done by showing that in order to tile the plane, the convex pentagons must have perimeter substantially higher than the regular pentagon, though a high bound on the perimeter of the non-convex pentagons would also suffice. Second we show (Thms. \ref{oneeffn&mTorus} and \ref{dihedraltype2n&mTorus}) that the ratio of convex pentagons to non-convex pentagons (or quadrilaterals) has an upper bound by bounding the number of efficient vertices and counting the number of angles appearing at such vertices. We derive a contradiction by showing that the upper bound is less than the lower bound, and thus conclude that a tiling cannot have perimeter less than that of a Cairo/Prismatic tiling.

In addition to our main results, we categorize non-convex pentagons in Proposition \ref{3typesnonconvex}, and we bound the angles and edge-lengths of efficient pentagons (Props. \ref{minmaxangle} and \ref{min-edgelength-pent}). We further restrict the behavior of efficient pentagons in perimeter-minimizing tilings in Proposition \ref{four-angles-don't-tile}, which shows that some efficient pentagons in the tiling must have five angles that tile with the efficient pentagons' angles. 

Definition \ref{def:perimeterRatio} generalizes the concept of the perimeter of a tiling to the planar case by defining the perimeter ratio as the limit supremum of the perimeters of the tiling restricted to increasingly large disks. Lemma \ref{truncation-lemma} shows that the limit infimum of the perimeter to area ratio of tiles completely contained within disks of radius $R$ centered at the origin does not exceed the perimeter ratio of a tiling. Propositions \ref{plane-ext-ratio1} and \ref{plane-ext-stronger-ratio1} generalize our results on the lower bound of the ratio of convex to non-convex pentagons in the general case, and in the special case when all the convex pentagons in the tiling are efficient. Proposition \ref{plane-ext-angles-convex-between} shows that planar tilings by non-convex pentagons and pentagons with angles strictly between $\pi/2$ and $2\pi/3$ have a perimeter ratio higher than that of a Cairo/Prismatic tiling, generalizing Proposition \ref{angles-convex-between}. 
Finally, Proposition \ref{equilateralPentagonMin} finds the perimeter-minimizing unit-area equilateral convex pentagon that tiles the plane monohedrally.

\subsection{Organization}
Section 2 explores tilings of large flat tori by efficient and non-convex pentagons. It provides results restricting the angles and edge-lengths of efficient pentagons and describes particular efficient pentagons of interest. Additionally it limits the ways in which efficient and non-convex pentagons interact in mixed tilings with perimeter less than Cairo/Prismatic, if such tilings exist, and considers efficient and non-convex pentagons outside the context of a tiling. The propositions in Section 2 are used to prove the main results in Sections 3 and 4. 
Section 3 shows that a tiling of a large, flat torus by an efficient pentagon and any number of non-convex quadrilaterals cannot have perimeter less than a Cairo/Prismatic tiling. 
Section 4 shows that a dihedral tiling of a large flat torus by an efficient pentagon and so-called Type 2 non-convex pentagons cannot have perimeter less than a Cairo/Prismatic tiling. 
Section 5 generalizes results on large, flat tori to similar results on the plane by limit arguments. 
Section 6 considers special cases of the main conjecture, such as dihedral tilings by efficient non-convex pentagons, where it may be easier to show that Cairo and Prismatic tilings are perimeter minimizing.
The final appendix section provides the perimeter-minimizing equilateral pentagon that tiles the plane monohedrally.

\subsection{Acknowledgements}
This paper is work of the 2012 ``SMALL'' Geometry Group, an undergraduate research group at Williams College, continued in Martin's thesis \cite{ZaneThesis}. Thanks to our advisor Frank Morgan, for his patience, guidance, and invaluable input. Thanks to Professor William Lenhart for his excellent comments and suggestions. Thanks to Andrew Kelly for contributions to the summer work that laid the groundwork for this paper. Thanks to the National Science Foundation for grants to the Williams College ``SMALL'' Research Experience for Undergraduates, and Williams College for additional funding. Additionally thank you to the Mathematical Association of America (MAA), MIT, the University of Chicago, the University of Texas at Austin, Williams College, and the NSF for a grant to Professor Morgan for funding in support of trips to speak at MathFest 2012, the MAA Northeastern Sectional Meeting at Bridgewater State, the Joint Meetings 2013 in San Diego, and the Texas Undergraduate Geometry and Topology Conference at UT Austin (texTAG).

%%%%%%%%%%%%%%%%%%%%%%%%%%%%%%%%%%%%%%%%%%%%%%%%%%
%%%%%%%%%%%%%%%%%%%%%%%%%%%%%%%%%%%%%%%%%%%%%%%%%%% SECTION 3 %%%%%%%%%%
%%%%%%%%%%%%%%%%%%%%%%%%%%%%%%%%%%%%%%%%%%%%%%%%%%

\setcounter{equation}{0}
\section{Pentagonal Tilings}
\label{penta}
In 2001, Thomas Hales \cite[Thm.1-A]{hales} proved that the regular hexagon provides a most efficient unit-area tiling of the plane. Of course, for triangles and quadrilaterals the perimeter-minimizing tiles are the equilateral triangle and the square. Unfortunately, the regular pentagon doesn't tile. There are, however, two nice pentagons which do tile.

\begin{definitions}
\label{def:CairoPrismaticEfficient}

\emph{While the terms are sometimes used in a broader sense, we define a pentagon as }Cairo \emph{or} Prismatic \emph{if it has three angles of $2\pi/3$, two right angles, nonadjacent or adjacent, respectively, and is circumscribed about a circle, as in Figure \ref{fig:cairoprismatic}. For unit area, both have perimeter $2\sqrt{2+\sqrt{3}} \approx 3.86$.} 

\emph{In this paper, we assume that all tilings by polygons are} edge-to-edge; \emph{ that is, if two tiles are adjacent they meet only along entire edges or at vertices.}

\emph{We say that a unit-area pentagon is} efficient \emph{if it has a perimeter less than or equal to that of a Cairo pentagon's, and that a tiling is} efficient \emph{if it has a perimeter per tile less than half the perimeter of a Cairo pentagon's. Note that a non-convex pentagon can never be efficient because it has more perimeter than a square, the optimal quadrilateral.} \emph{An} efficient vertex \emph{is a vertex in a tiling which is surrounded exclusively by efficient pentagons.}

\emph{Finally, given a sequence of angles $a_i$, we say that an angle $a_j$} tiles \emph{if for some positive integers $m_i$ including $m_j$, $\sum m_i a_i = 2 \pi$.}
\end{definitions}

\begin{remarks}
\emph{Note that an efficient pentagon cannot tile monohedrally. If it did, it would violate Theorem \ref{chungthm}. But an efficient pentagon could have five angles that tile.}

\emph{An efficient pentagon cannot have more than two edges greater than $\sqrt{2}$ because by definition its perimeter is less than a Cairo pentagon's, about 3.86, which is less than $3\sqrt{2}$.}

\emph{While} isoperimetric \emph{tilings by pentagons have been considered only recently \cite{pen11},
there has been extensive research on pentagonal tilings in
general. There are 14 known types of convex
pentagons which tile the plane monohedrally, but no proof that these types form a complete list,
despite notable recent progress by Bagina \cite{bagina11} and Sugimoto and Ogawa \cite{sugi2006}, \cite{sugi1}, \cite{sugi2}, \cite{sugi3}. There is a
complete list for equilateral convex pentagons (\cite{hirsch&hunt}, see also \cite{bagina04}) and apparently for all equilateral pentagons \cite{hirhunt}. These sources provide partial results regarding the properties of convex pentagons which tile, and focus their attention on showing that the known list of 14 types of pentagonal tiles is complete. Hirschhorn and Hunt consider non-convex equilateral pentagons which tile the plane (\cite{hirhunt}), but more general studies of types of non-convex pentagons which tile are absent from the literature, as are any in-depth considerations of tilings by mixtures of convex and non-convex pentagons.}

\emph{Chung et al. \cite[Thm. 3.5]{pen11} proved that Cairo and Prismatic pentagons provide optimal ways to tile the plane using (mixtures of) \emph{convex} pentagons, but were unable to remove the convexity assumption. We conjecture that their results hold without the convexity assumption, and rule out certain tilings with mixtures of convex and non-convex pentagons, though the main conjecture remains open. We begin with the main result from Chung et al.}
\end{remarks}

\begin{theorem} \cite[Thm 3.5]{pen11}
\label{chungthm}
Perimeter-minimizing planar tilings by unit-area convex polygons with at most five sides are given by Cairo and Prismatic tiles.
\end{theorem}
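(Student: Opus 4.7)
The plan is to pass first to a large flat torus (by a standard limit argument, as discussed in Section 5 below) so that one works with a finite tiling without boundary effects, and then to bound the total perimeter in two stages: a per-tile isoperimetric inequality in terms of the tile's angles, followed by a global combinatorial minimization that uses the vertex constraint $\sum \alpha = 2\pi$ together with the assumption that each tile is convex with at most five sides.

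For the per-tile step I would show that any unit-area convex $n$-gon with interior angles $\alpha_1,\ldots,\alpha_n$ satisfies
$$P \;\ge\; 2\sqrt{\sum_{i=1}^n \cot(\alpha_i/2)},$$
with equality if and only if the polygon is tangential (circumscribed about a circle). The tangential case is a direct calculation: if the inradius is $r$ then the two tangent segments at the vertex of angle $\alpha_i$ have length $r\cot(\alpha_i/2)$, so $P = 2r\sum\cot(\alpha_i/2)$, and substituting into $A = rP/2 = 1$ yields the formula. The inequality itself follows from a standard Lagrange/variational argument showing that among unit-area convex polygons with prescribed angles, the tangential one minimizes perimeter. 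Note that substituting the angle tuple $(\tfrac{2\pi}{3},\tfrac{2\pi}{3},\tfrac{2\pi}{3},\tfrac{\pi}{2},\tfrac{\pi}{2})$ recovers exactly the Cairo/Prismatic perimeter $2\sqrt{2+\sqrt{3}}$, and one can check that this is the unique minimizer of the right-hand side over angle tuples of a convex $n$-gon with $3\le n\le5$ satisfying combinatorial admissibility at their vertices.

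For the global step, I would sum the per-tile bound over all tiles, producing a sum of square roots that we want to bound below by $\tfrac{1}{2}(\text{number of tiles})\cdot 2\sqrt{2+\sqrt{3}}$ (the factor of $\tfrac12$ accounts for each edge being shared by two tiles). The key move is to reorganize the right-hand side into a sum over vertices weighted by the angles meeting at each vertex. Concavity of $\sqrt{\cdot}$ lets one exchange the sum and the root at a controlled cost, after which the bound becomes an affine functional of $\sqrt{\cot(\alpha/2)}$ evaluated at each tile-angle pair. The problem then reduces to a constrained optimization at each vertex: minimize the vertex contribution subject to $\sum_i \alpha_i = 2\pi$, convexity ($\alpha_i<\pi$), and the pentagon bound $\alpha_i\le 3\pi/5$ for five-sided tiles. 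Jensen's inequality together with a case analysis on the vertex degree then identifies the degree-three profile $(\tfrac{2\pi}{3},\tfrac{2\pi}{3},\tfrac{2\pi}{3})$ and the degree-four profile $(\tfrac{\pi}{2},\tfrac{\pi}{2},\tfrac{2\pi}{3},\tfrac{2\pi}{3})$ as the unique minimizers---precisely the vertex types that appear in the Cairo and Prismatic tilings.

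The main obstacle is the interplay between the per-tile bound (a square root of a sum of $\cot(\alpha_i/2)$ values) and the per-vertex constraint (a linear relation on angles). The square root does not commute with summation, so one must choose the rearrangement carefully to avoid losing the equality case, and one must verify that the resulting slack at each non-Cairo/non-Prismatic vertex configuration is strictly positive. The mixed-tile aspect introduces a second subtlety: one has to rule out that triangles or quadrilaterals inserted among pentagons could locally reduce perimeter, which amounts to checking that any vertex angle strictly outside $[\pi/2,\, 2\pi/3]$ carries a strictly positive penalty against the Cairo/Prismatic benchmark. Once these sharp inequalities are in place, the equality-case analysis forces every tile to be tangential with either the Cairo or the Prismatic angle profile, completing the proof.
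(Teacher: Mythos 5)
First, a point of order: the paper does not prove this statement. Theorem \ref{chungthm} is imported verbatim from Chung et al.\ \cite[Thm 3.5]{pen11} and used as a black box throughout, so there is no ``paper's own proof'' to compare yours against; the comparison has to be with the argument in the cited reference. Your per-tile step is fine and is exactly Lindel\"of's inequality as restated in Lemma \ref{cot-perimeter-lemma}: a unit-area convex $n$-gon with angles $\alpha_i$ has perimeter at least $2\sqrt{\sum_i\cot(\alpha_i/2)}$, with equality for tangential polygons, and the Cairo/Prismatic profile gives $2\sqrt{2+\sqrt{3}}$. That much is standard and correct.

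The genuine gap is in your global step, and it sits precisely at the point you yourself flag as ``the main obstacle.'' You propose to ``reorganize the right-hand side into a sum over vertices'' and invoke concavity of $\sqrt{\cdot}$ together with Jensen to ``exchange the sum and the root at a controlled cost.'' But concavity gives $\frac{1}{N}\sum_P\sqrt{S_P}\le\sqrt{\frac{1}{N}\sum_P S_P}$, i.e.\ an \emph{upper} bound on the average of the square roots in terms of the square root of the average; what you need is a \emph{lower} bound on $\sum_P\sqrt{S_P}$, and Jensen points the wrong way. Even if you establish that $\sum_P S_P=\sum_{v}\sum_{\alpha\ni v}\cot(\alpha/2)$ is minimized by the Cairo/Prismatic vertex profiles, this does not control $\sum_P\sqrt{S_P}$: a mixture of tiles with small and large $S_P$ can have the correct average $S_P$ but a strictly smaller average $\sqrt{S_P}$. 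This is exactly why the hexagonal analogue required Hales's work. The device that actually closes this gap in Chung et al.\ is not Jensen but a globally valid \emph{affine} lower bound for the perimeter as a function of the tile's angles --- valid over the whole admissible angle region for convex polygons with at most five sides and tight at the Cairo and Prismatic profiles --- which, being linear, \emph{does} distribute over vertices via $\sum\alpha=2\pi$. Establishing that affine bound (and checking it fails for hexagons, which is why the theorem stops at five sides) is the real content of the proof, and your sketch does not supply it. A secondary, smaller issue: you cannot simply ``pass to a large flat torus'' from an arbitrary planar tiling; the reference works on the plane directly and controls boundary effects with a truncation lemma of the type quoted here as Lemma \ref{truncation-lemma}.
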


Various times throughout the paper we use the following planar case of a theorem of Lindel\"{o}f (\cite{lind}, see Florian \cite[pp. 174-180]{florian} and Chung et al. \cite[Prop 3.1]{pen11} from before the authors knew about Lindel\"{o}f):

\begin{theorem}[Lindel\"{o}f's Theorem \cite{lind}]
\label{lindelof-lemma}
For $n$ given angles, the $n$-gon circumscribed about a circle is uniquely perimeter minimizing for its area. 
\end{theorem}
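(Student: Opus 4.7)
The plan is to deduce Lindel\"{o}f's theorem from Minkowski's mixed-area inequality for convex polygons that share a prescribed set of edge normals. Once the $n$ interior angles are fixed in cyclic order, the unit outward normals $u_1,\ldots,u_n$ to the edges are determined, so every admissible polygon $P$ is encoded (up to translation) by its support numbers $h_i(P)>0$, the distances from a chosen interior reference point to the line carrying edge $i$. I would exploit that the edge lengths $\ell_i(P)$ and the perimeter $L(P)=\sum_i\ell_i(P)$ are linear in the $h_i(P)$, while the triangle decomposition from the reference point gives
\[
2A(P)=\sum_i h_i(P)\,\ell_i(P).
\]

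Next I would introduce the mixed area $M(P,Q)$ of two admissible polygons via $A(P+tQ)=A(P)+2tM(P,Q)+t^2A(Q)$, and derive from the expansion above, together with the reciprocity $\sum_i h_i(P)\ell_i(Q)=\sum_i h_i(Q)\ell_i(P)$, the symmetric formula
\[
2M(P,Q)=\sum_i h_i(P)\,\ell_i(Q).
\]
Specializing to the polygon $P_0$ with the prescribed angles circumscribed about a unit circle centered at the reference point yields $h_i(P_0)=1$ for every $i$, so $2A(P_0)=L(P_0)$ and $2M(P,P_0)=L(P)$ for any competitor $P$ with the same angles. The mixed-area inequality $M(P,P_0)^2\ge A(P)A(P_0)$ then forces $L(P)^2\ge 4A(P)A(P_0)$; rescaling $P_0$ by $c=\sqrt{A(P)/A(P_0)}$ produces the circumscribed polygon $P_0^{\ast}$ of area $A(P)$ with perimeter $2\sqrt{A(P)A(P_0)}$, so $L(P)\ge L(P_0^{\ast})$, with equality iff $P$ is a translate of $P_0^{\ast}$.

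The main obstacle is the mixed-area inequality $M(P,P_0)^2\ge A(P)A(P_0)$, which I would obtain from the Brunn--Minkowski inequality as follows. For $t\ge 0$ the function $f(t)=\sqrt{A(P+tP_0)}=\sqrt{A(P)+2tM(P,P_0)+t^2A(P_0)}$ satisfies $f''\le 0$ precisely when $M(P,P_0)^2\ge A(P)A(P_0)$, and the concavity of $f$ on $[0,\infty)$ is exactly Brunn--Minkowski applied to the one-parameter family $P+tP_0$. The equality case of Brunn--Minkowski for planar convex bodies forces $P$ and $P_0$ to be homothetic, which in our setting of common edge normals means $P$ is a translate of some $cP_0$, giving the uniqueness assertion of Lindel\"{o}f's theorem.
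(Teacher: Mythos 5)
The paper offers no proof of this statement at all: Theorem \ref{lindelof-lemma} is imported as a known result of Lindel\"{o}f \cite{lind}, with pointers to Florian \cite{florian} and to Chung et al.\ \cite{pen11}, so there is no in-paper argument to compare yours against. Your mixed-area proof is the standard modern route to Lindel\"{o}f's theorem and is essentially correct: once the angles (hence the edge normals, in a fixed cyclic order) are prescribed, support numbers and edge lengths are linear under Minkowski combination, $2A=\sum_i h_i\ell_i$ is the associated quadratic form, the circumscribed polygon $P_0$ has $h_i\equiv 1$ so that $2M(P,P_0)=L(P)$ and $2A(P_0)=L(P_0)$, and Minkowski's first inequality $M(P,P_0)^2\ge A(P)A(P_0)$ --- which, exactly as you say, is the concavity of $t\mapsto\sqrt{A(P)+2M(P,P_0)t+A(P_0)t^2}$, i.e.\ Brunn--Minkowski along $P+tP_0$ --- yields $L(P)\ge 2\sqrt{A(P)A(P_0)}=L(P_0^{\ast})$, with equality forcing homothety and hence $P$ a translate of $P_0^{\ast}$. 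This buys more than a bare variational argument would: the uniqueness clause comes for free from the equality case of Brunn--Minkowski, and the identity $2A(P_0)=L(P_0)$ (inradius $1$) is what specializes the bound to the cotangent formula of Lemma \ref{cot-perimeter-lemma}.

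Two points should be made explicit in a final write-up, though neither is a genuine gap. First, the reciprocity $\sum_i h_i(P)\ell_i(Q)=\sum_i h_i(Q)\ell_i(P)$ is the symmetry of the matrix of the quadratic form $2A$ in the variables $h_i$; it does need the one-line check that the cross-coefficients $\partial\ell_i/\partial h_j$ and $\partial\ell_j/\partial h_i$ agree (for adjacent edges both equal the cosecant of the angle between the normals $u_i$ and $u_j$). Second, your parametrization fixes both the cyclic order of the given angles and the convexity of the competitor (all angles less than $\pi$, all $h_i>0$ after translating a reference point into the interior). For a different cyclic order the argument repeats verbatim, and the resulting circumscribed minimizers all have the same unit-area perimeter $2\sqrt{\sum_i\cot(a_i/2)}$, so the minimizing value is order-independent even though ``the'' minimizer is unique only within each ordering; this is precisely the form in which the paper uses the theorem.
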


Chung et al. give an explicit formula for finding the perimeter of an $n$-gon circumscribed about a circle and add an immediate corollary to the result:

\begin{lemma}\cite[Prop. 3.1]{pen11}
\label{cot-perimeter-lemma}
Scaled to unit area, an $n$-gon with  angles $0 < a_i \leq \pi$ has perimeter greater than or equal to
\begin{equation}
2\sqrt{\sum_{i=1}^n \cot(a_i/2)}  ,
\end{equation}
with equality holding if and only if the $n$-gon is circumscribed about a circle.
For convex $n$-gons, since cotangent is strictly convex up to $\pi/2$, the more nearly equal the angles, the smaller the perimeter.
\end{lemma}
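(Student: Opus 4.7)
The plan is to apply Lindelöf's theorem (Theorem \ref{lindelof-lemma}) to reduce the inequality to an equality for the tangential polygon, and then compute that equality explicitly via elementary trigonometry. By Lindelöf, among all $n$-gons with prescribed angles $a_1,\dots,a_n$ and prescribed area, the perimeter is minimized uniquely by the one circumscribed about a circle, so it suffices to evaluate the perimeter of such a polygon and then rescale to unit area.

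For the first step, I would fix a polygon with angles $a_i$ circumscribed about a circle of radius $r$. Drawing segments from the incenter to each vertex and to each tangency point decomposes the polygon into $2n$ right triangles. At a vertex with angle $a_i$, the segment to the incenter bisects $a_i$, so the tangent length from that vertex to either adjacent touching point equals $r\cot(a_i/2)$. Each edge is the sum of the two adjacent tangent lengths, and so summing over all edges and counting each vertex contribution twice gives
\begin{equation*}
P \;=\; 2r\sum_{i=1}^{n}\cot(a_i/2).
\end{equation*}
Next, decomposing the polygon into $n$ triangles, one over each edge with apex at the incenter and height $r$, gives $A = \tfrac{1}{2}rP = r^{2}\sum_{i}\cot(a_i/2)$.

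Setting $A=1$ solves $r = \bigl(\sum_i\cot(a_i/2)\bigr)^{-1/2}$, and substituting into the formula for $P$ yields $P = 2\sqrt{\sum_{i}\cot(a_i/2)}$. Combined with Lindelöf's theorem, this gives the claimed inequality, with equality precisely for the circumscribed polygon.

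For the final assertion about convex $n$-gons, I would verify that $f(x)=\cot(x/2)$ is strictly convex on $(0,\pi)$ by computing $f''(x)=\tfrac{1}{2}\cos(x/2)/\sin^{3}(x/2)>0$ on that interval. Since convex pentagons have all $a_i\in(0,\pi)$ and the angle sum $\sum a_i=(n-2)\pi$ is fixed, Jensen's inequality (or a direct two-variable smoothing argument) shows that $\sum_i\cot(a_i/2)$, and hence the lower bound on the perimeter, decreases as the angles are made more nearly equal. The only subtlety I anticipate is the strict-convexity claim, which is immediate from the second-derivative computation but should be noted so that the "uniquely" in Lindelöf translates correctly into the equality case; the rest is bookkeeping.
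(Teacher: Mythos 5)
The paper itself gives no proof of this lemma; it is quoted verbatim from Chung et al.\ \cite[Prop.~3.1]{pen11}, whose derivation is exactly the one you give: tangent lengths $r\cot(a_i/2)$ from the incircle decomposition yield $P = 2r\sum\cot(a_i/2)$ and $A = \tfrac12 rP$, and Lindel\"{o}f's theorem supplies the inequality and the equality case. Your argument is correct and complete, including the second-derivative check $f''(x) = \tfrac12\cos(x/2)/\sin^3(x/2) > 0$ on $(0,\pi)$ that justifies the final sentence of the statement.
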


The following proposition follows directly from the above, and will be useful later on in proving our main results.

\begin{proposition}
\label{efficient-pentagon-one-small-angle}
If two angles in a pentagon average less than $\pi/2$ then the pentagon cannot be efficient. If two angles average exactly $\pi/2$, the pentagon is efficient only if it is Cairo or Prismatic.
\end{proposition}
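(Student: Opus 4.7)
The plan is to reduce to the convex case and then combine Lemma \ref{cot-perimeter-lemma} with Jensen's inequality. Non-convex pentagons are automatically not efficient by Definition \ref{def:CairoPrismaticEfficient}, so I may assume all five angles lie in $(0, \pi]$. Label the two distinguished angles as $a_1, a_2$ with sum $s := a_1 + a_2 \leq \pi$, and the remaining three as $a_3, a_4, a_5$ with sum $3\pi - s \geq 2\pi$.

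Since $\cot(x/2)$ is strictly convex on $(0, \pi)$, applying Jensen's inequality separately to the pair and to the triple gives
\[
\cot(a_1/2) + \cot(a_2/2) \ \geq\ 2\cot(s/4), \qquad \sum_{i=3}^{5}\cot(a_i/2) \ \geq\ 3\cot\!\Big(\tfrac{\pi}{2} - \tfrac{s}{6}\Big) \ =\ 3\tan(s/6).
\]
Set $g(s) := 2\cot(s/4) + 3\tan(s/6)$. A direct check gives $g(\pi) = 2 + \sqrt{3}$, which via Lemma \ref{cot-perimeter-lemma} is exactly what yields the Cairo/Prismatic perimeter $2\sqrt{2+\sqrt{3}}$. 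The key analytic step is to verify that $g$ is strictly decreasing on $(0, \pi]$: since $g'(s) = \tfrac{1}{2}\bigl(\sec^2(s/6) - \csc^2(s/4)\bigr)$, negativity of $g'$ is equivalent to $\cos(s/6) > \sin(s/4)$, i.e.\ $\sin(\tfrac{\pi}{2} - \tfrac{s}{6}) > \sin(s/4)$, which for arguments in $(0, \pi/2)$ reduces to $s < 6\pi/5$, a fortiori true on $(0, \pi]$.

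Therefore $\sum_{i=1}^{5}\cot(a_i/2) \geq g(s) \geq g(\pi) = 2 + \sqrt{3}$, with strict inequality whenever $s < \pi$. Lemma \ref{cot-perimeter-lemma} then yields perimeter strictly greater than $2\sqrt{2+\sqrt{3}}$ in that case, proving inefficiency and giving the first assertion. In the borderline case $s = \pi$, efficiency forces equality in each step above: Jensen equality pins the angles down as $\{\pi/2, \pi/2, 2\pi/3, 2\pi/3, 2\pi/3\}$, and equality in Lemma \ref{cot-perimeter-lemma} forces the pentagon to be circumscribed about a circle. Up to whether the two right angles are adjacent or nonadjacent, such a pentagon is precisely Prismatic or Cairo, which gives the second assertion. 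I expect the main technical hurdle to be the monotonicity of $g$, but as indicated above it reduces to a short trigonometric comparison rather than any genuinely difficult optimization.
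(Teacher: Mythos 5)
Your proof is correct, and at its core it is the same approach as the paper's: reduce to convex pentagons, invoke Lemma \ref{cot-perimeter-lemma}, and use convexity of $\cot(x/2)$ to push the angle configuration toward two right angles and three equal angles. The difference is one of completeness rather than strategy. The paper's proof is a two-sentence appeal to ``the more nearly equal the angles, the smaller the perimeter,'' and as written it only treats the case where both distinguished angles are individually at most $\pi/2$, not the stated hypothesis that their \emph{average} is at most $\pi/2$. Your version actually closes that gap: the two-block Jensen step reduces everything to the one-variable function $g(s)=2\cot(s/4)+3\tan(s/6)$, and the monotonicity computation $g'(s)=\tfrac12\bigl(\sec^2(s/6)-\csc^2(s/4)\bigr)<0$ for $s<6\pi/5$ is exactly the optimization the paper leaves implicit. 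The equality analysis ($s=\pi$ forcing angles $\pi/2,\pi/2,2\pi/3,2\pi/3,2\pi/3$ and, via the equality case of Lemma \ref{cot-perimeter-lemma}, circumscription about a circle) correctly recovers the Cairo/Prismatic characterization of Definitions \ref{def:CairoPrismaticEfficient}. In short: same route, but your write-up is the rigorous version of it.
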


\begin{proof}
Suppose that at least two angles are each less than or equal to $\pi/2$. By Lemma \ref{cot-perimeter-lemma}, the perimeter is uniquely minimized when exactly two angles equal $\pi/2$, the other angles are equal, and the pentagon is circumscribed about a circle, i.e. for Cairo and Prismatic. Therefore the pentagon is not efficient if the average of two angles is less than $\pi/2$, and if the average is equal to $\pi/2$ the pentagon is Cairo Prismatic.
\end{proof}

We begin our analysis of non-convex pentagons, first by categorizing them into two types.

\begin{proposition}
\label{3typesnonconvex}
There are two types of non-convex pentagons, as in Figure \ref{fig:nonconvexfig}:

\begin{enumerate}
\item a non-convex pentagon with one interior angle larger than $\pi$, 
\item a non-convex pentagon with two interior angles (these can be adjacent or non-adjacent) larger than $\pi$ whose average is less than $3\pi/2$,
\end{enumerate}
A unit-area Type 1 pentagon has perimeter greater than a square's (4). A unit-area Type 2 pentagon has perimeter greater than an equilateral triangle's (about 4.559).
\end{proposition}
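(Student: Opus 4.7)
The plan is to handle the classification and the two perimeter bounds in three stages. For the classification, I would start from the fact that a pentagon's interior angles sum to $3\pi$ and each lies in $(0, 2\pi)$. Suppose $k$ angles exceed $\pi$. Since the remaining $5-k$ angles are strictly positive, the total angle sum strictly exceeds $k\pi$, forcing $k\pi < 3\pi$, i.e.\ $k \leq 2$. A non-convex pentagon has $k \geq 1$, so $k \in \{1, 2\}$, giving precisely Types 1 and 2. For Type 2, the two reflex angles sum to $3\pi$ minus the three remaining positive angles, so their sum is strictly less than $3\pi$ and their average is strictly less than $3\pi/2$, as claimed.

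For the perimeter bounds, the main idea is to compare the pentagon $P$ to its convex hull $Q := \mathrm{conv}(P)$. Three observations drive the argument: (i) the vertices of $Q$ are a subset of the \emph{convex} vertices of $P$, because any reflex vertex is not an extreme point of $Q$; (ii) $\mathrm{Perimeter}(P) > \mathrm{Perimeter}(Q)$, since passing from $P$ to $Q$ replaces each indented chain of edges with its chord, strictly decreasing length by iterated triangle inequality; and (iii) $\mathrm{Area}(Q) > \mathrm{Area}(P) = 1$, since each dent contributes strictly positive area.

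For Type 1, $P$ has four convex vertices, so $Q$ has at most four vertices and is therefore a quadrilateral or (degenerate case) a triangle. The quadrilateral isoperimetric inequality gives $\mathrm{Perimeter}(Q) \geq 4\sqrt{\mathrm{Area}(Q)}$, and the triangle case gives $\mathrm{Perimeter}(Q) \geq 2 \cdot 3^{3/4}\sqrt{\mathrm{Area}(Q)} \approx 4.559\sqrt{\mathrm{Area}(Q)}$; since $\mathrm{Area}(Q) > 1$, in either case $\mathrm{Perimeter}(Q) > 4$, hence $\mathrm{Perimeter}(P) > 4$. For Type 2, $P$ has only three convex vertices, so $Q$ has at most three vertices, and the condition $\mathrm{Area}(Q) \geq 1 > 0$ forces $Q$ to be a nondegenerate triangle. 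The triangle isoperimetric inequality then yields $\mathrm{Perimeter}(P) > \mathrm{Perimeter}(Q) > 2 \cdot 3^{3/4} \approx 4.559$.

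The main technical point to verify carefully is the strict inequality $\mathrm{Perimeter}(P) > \mathrm{Perimeter}(Q)$ in the presence of reflex vertices: one must show each reflex vertex lies strictly interior to $Q$ so that the associated polygon inequality is strict, which follows from the angle at such a vertex being strictly greater than $\pi$. A secondary check is the degenerate Type 1 case in which three convex vertices of $P$ happen to be collinear, making $Q$ a triangle; this only strengthens the bound since $2\cdot 3^{3/4} > 4$. No case analysis of adjacent versus non-adjacent reflex vertices in Type 2 is needed, since the convex hull argument treats both subcases uniformly.
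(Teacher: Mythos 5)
Your proposal is correct and follows essentially the same route as the paper: the classification comes from the $3\pi$ angle-sum bound, and the perimeter bounds come from passing to the convex hull (a quadrilateral for Type 1, a triangle for Type 2) and invoking the isoperimetric minimality of the square and equilateral triangle. Your write-up is simply more explicit about strictness and the degenerate collinear case than the paper's one-line justification.
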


\begin{proof}
If a pentagon has more than two interior angles larger than $\pi$, then the sum of the interior angles will be greater than $3\pi$, which is a contradiction since the sum of all the interior angles of a pentagon is always $3\pi$. Therefore, either it has one angle larger than $\pi$, as in Case 1, or it has two. If it has two angles larger than $\pi$, and the average of the two angles is greater than $3\pi/2$, then they will sum to more than $3\pi$, which is a contradiction. Hence, the average of the two angles must be less than $3\pi/2$. These large angles are either adjacent or not adjacent, so we have Case 2.

To prove the final statement, just note that taking the convex hull and then scaling down to unit area reduces perimeter, and that the square and equilateral triangle minimize perimeter for given area and number of sides.
\end{proof}

\begin{figure}
		\centering
	\includegraphics[scale=0.4]{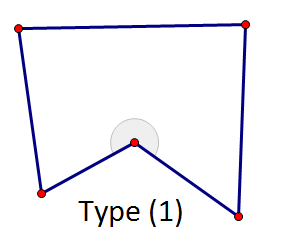}
	\includegraphics[scale=0.4]{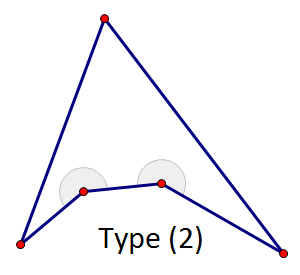}
	\includegraphics[scale=0.4]{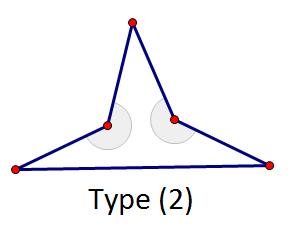}
    \caption{A non-convex pentagon can have one or two interior angles greater than $\pi$.}
    \label{fig:nonconvexfig}
\end{figure}

\begin{remark}
\emph{By Proposition \ref{3typesnonconvex} a unit-area Type 1 non-convex pentagon must have at least one edge with length at least 4/5.}
\end{remark}

We now bound the edge-lengths of unit-area non-convex quadrilaterals and then extend this bound to Type 2 non-convex pentagons.

\begin{proposition}
\label{quad-min-biggest-edge}
The quadrilateral formed by taking a right, isosceles triangle and adding a vertex at the midpoint of the hypotenuse, minimizes longest edge-length for given area among quadrilaterals with an angle measuring $\pi$.
\end{proposition}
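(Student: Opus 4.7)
The plan is to recast the problem as one about triangles with an interior point on a side. A quadrilateral with an interior angle equal to $\pi$ is (combinatorially) a triangle $T$ together with an extra vertex $P$ placed on one of its sides; the four edges of the quadrilateral are then the two sides $a, b$ of $T$ meeting at the vertex opposite $P$'s side, together with the two segments $c_1, c_2$ into which $P$ splits the third side. In particular the area of the quadrilateral equals the area of $T$.

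Write $M := \max(a, b, c_1, c_2)$. The main step is the trigonometric bound
\begin{equation}
\text{area} = \tfrac{1}{2} a b \sin\theta \le \tfrac{1}{2} a b \le \tfrac{1}{2} M^2,
\end{equation}
where $\theta$ is the angle between the sides of lengths $a$ and $b$. For unit area this immediately gives $M \ge \sqrt{2}$, and equality forces $a = b = M = \sqrt{2}$ together with $\theta = \pi/2$; that is, $T$ is the right isosceles triangle with legs $\sqrt{2}$.

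To verify the bound is achieved I would then exhibit the extremal: the right isosceles triangle with legs $\sqrt{2}$ has hypotenuse of length $2$, and placing $P$ at its midpoint gives $c_1 = c_2 = 1 \le \sqrt{2}$, so indeed all four edges are at most $\sqrt{2}$ while the area is $1$. This confirms that the quadrilateral described in the statement attains the minimum longest edge-length $\sqrt{2}$.

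The argument is essentially two lines of trigonometry once the setup is written down; there is no real obstacle. The only mild subtlety is that the placement of $P$ at the midpoint is not forced by the optimization — any placement with $c_1, c_2 \le \sqrt{2}$ works — but the midpoint is a canonical choice, and the point of the proposition is to identify one such minimizer (for later use in bounding edge-lengths of Type 2 non-convex pentagons).
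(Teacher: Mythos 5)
Your proof is correct and takes essentially the same route as the paper's: the paper's entire argument is the observation that ``given two sides of length at most $a$, a right isosceles triangle with sides of length $a$ maximizes area,'' which is exactly your inequality $\mathrm{area} = \tfrac12 ab\sin\theta \le \tfrac12 M^2$. Your write-up just makes explicit the reduction to a triangle with a marked point on one side and the check that the midpoint placement keeps the two remaining edges at most $\sqrt{2}$, both of which the paper leaves to ``the result follows.''
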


\begin{proof}
Given two sides of length at most $a$, a right isosceles triangle with sides of length $a$ maximizes area. The result follows.
\end{proof}

\begin{lemma}
\label{quad-quadrilateral-max-edge}
For a unit-area non-convex quadrilateral, some edge must be greater than or equal to $\sqrt{2}$.
\end{lemma}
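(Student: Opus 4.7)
The plan is to exploit the fact that a simple non-convex quadrilateral admits a clean decomposition through its convex hull. Label the quadrilateral $ABCD$ so that the unique reflex angle (the interior angle exceeding $\pi$) sits at $C$. Then $C$ lies strictly inside the triangle $ABD$ spanned by the remaining three vertices, this triangle is the convex hull of $ABCD$, and the planar region enclosed by the quadrilateral equals the triangle $ABD$ with the triangle $BCD$ carved out.

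From this decomposition one immediately obtains
\[
1 = \mathrm{area}(ABCD) = \mathrm{area}(ABD) - \mathrm{area}(BCD) \leq \mathrm{area}(ABD) \leq \tfrac{1}{2}\, AB \cdot AD,
\]
where the last inequality is the elementary bound $\mathrm{area}(ABD) = \tfrac{1}{2} AB \cdot AD \sin(\angle BAD) \leq \tfrac{1}{2} AB \cdot AD$. Letting $\ell$ denote the longest edge of the quadrilateral, both $AB$ and $AD$ are edges of $ABCD$, so $\ell^{2} \geq AB \cdot AD \geq 2$, i.e., $\ell \geq \sqrt{2}$.

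An alternative route, which mirrors the framing of Proposition \ref{quad-min-biggest-edge}, is to slide $C$ onto the segment $BD$: by the triangle inequality $BD \leq BC + CD$ there exists a point $C' \in BD$ with $BC' \leq BC$ and $C'D \leq CD$, so the modified quadrilateral $ABC'D$ has an angle of $\pi$ at $C'$, area equal to $\mathrm{area}(ABD) \geq 1$, and maximum edge at most $\ell$. Proposition \ref{quad-min-biggest-edge}, applied after rescaling to unit area, then forces $\ell \geq \sqrt{2}$. I do not anticipate any real obstacle: the only geometric fact used beyond basic algebra is that the reflex vertex of a simple non-convex quadrilateral lies inside the triangle spanned by the other three, which is a standard consequence of simplicity.
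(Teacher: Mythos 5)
Your proposal is correct, and it actually contains two valid arguments. Your second route is essentially the paper's proof: the paper replaces the reflex vertex with a vertex on the segment joining its neighbors (placed at the midpoint, whereas you choose $C'$ so that $BC'\leq BC$ and $C'D\leq CD$, a slightly more careful placement), notes the area only increases, rescales to unit area, and invokes Proposition \ref{quad-min-biggest-edge}. Your first route is genuinely different and arguably cleaner: rather than reducing to the extremal $\pi$-angle quadrilateral, you use the convex-hull decomposition $\mathrm{area}(ABCD)=\mathrm{area}(ABD)-\mathrm{area}(BCD)$ together with the elementary bound $\mathrm{area}(ABD)\leq\tfrac12 AB\cdot AD$ to get $AB\cdot AD\geq 2$ directly. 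This is self-contained (it does not lean on Proposition \ref{quad-min-biggest-edge}, whose own proof in the paper is quite terse), and it yields slightly more information: it pins down that one of the two edges meeting at the vertex \emph{opposite} the reflex vertex must already be at least $\sqrt{2}$, not merely that some edge is. The paper's route, by contrast, has the advantage of reusing the extremal characterization of Proposition \ref{quad-min-biggest-edge}, which fits the paper's general strategy of reducing edge-length bounds to degenerate configurations (the same trick is repeated in Lemma \ref{ncvx-pent-max-edge}). Both arguments are sound; the only fact you use beyond algebra, that the reflex vertex of a simple non-convex quadrilateral lies inside the triangle spanned by the other three vertices, is standard and correctly invoked.
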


\begin{proof}
Assume to the contrary that there existed a unit-area non-convex quadrilateral with four edges less than $\sqrt{2}$. Replacing one of the non-convex angle with a straight line with a vertex in the center and scaling down to unit area would contradict Proposition \ref{quad-min-biggest-edge}.
\end{proof}

\begin{lemma}
\label{ncvx-pent-max-edge}
For a unit-area Type 2 non-convex pentagon, some edge must be greater than or equal to $\sqrt{2}$.
\end{lemma}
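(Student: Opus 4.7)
My plan is to argue by contradiction, in the spirit of the proof of Lemma~\ref{quad-quadrilateral-max-edge}. Suppose $P$ is a Type~2 non-convex pentagon with unit area and all five edges strictly less than $\sqrt{2}$, and label the vertices $v_1,\ldots,v_5$ in cyclic order. Since a reflex vertex of a simple polygon cannot lie on the boundary of its convex hull, the three convex vertices of $P$ are the vertices of the hull, so the hull is a triangle. The argument then splits according to whether the two reflex vertices of $P$ are adjacent or non-adjacent in the cyclic ordering.

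Suppose first that the reflex vertices are adjacent, say $v_1$ and $v_2$. Then the hull triangle is $v_3v_4v_5$, and two of its edges, namely those meeting at $v_4$, are pentagon edges of $P$. Writing $\theta_4$ for the interior angle of $P$ at $v_4$, the hull area equals
\[
\tfrac12\,|v_3v_4|\,|v_4v_5|\sin\theta_4 \;<\; \tfrac12(\sqrt{2})(\sqrt{2}) = 1,
\]
since both edges in the product are strictly less than $\sqrt{2}$. This contradicts the fact that $P$ lies inside its convex hull and has area $1$.

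For the non-adjacent subcase (reflex at $v_1$ and $v_3$), only one edge of the hull triangle $v_2v_4v_5$ is a pentagon edge, so the argument above does not close. To handle it I would use the ``straighten and subdivide'' trick from the proof of Lemma~\ref{quad-quadrilateral-max-edge}: replace $v_1$ by the midpoint $v_1'$ of the chord $v_5v_2$, producing a pentagon that still has a reflex angle at $v_3$, has area strictly greater than $1$, and has all its edges still less than $\sqrt{2}$. Viewed as a non-convex quadrilateral $v_5v_2v_3v_4$ with a collinear vertex $v_1'$ on one of its sides, Lemma~\ref{quad-quadrilateral-max-edge} forces $|v_5v_2|>\sqrt{2}$, since the other three edges of this quadrilateral are pentagon edges of $P$. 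The analogous straightening at $v_3$ gives $|v_2v_4|>\sqrt{2}$.

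To close the non-adjacent case I would then combine these chord bounds with the triangle inequalities $|v_5v_2|\le|v_5v_1|+|v_1v_2|$ and $|v_2v_4|\le|v_2v_3|+|v_3v_4|$, with the notch-area bounds $T_i\le\tfrac12|v_{i-1}v_i|\,|v_iv_{i+1}|$ and the identity $|v_5v_2|^2\ge 2(1+T_1)$ coming from Lemma~\ref{quad-quadrilateral-max-edge}, and with the Type~2 angle constraint $\theta_1+\theta_3<3\pi$ from Proposition~\ref{3typesnonconvex}, to force an individual pentagon edge (not merely a chord) to reach $\sqrt{2}$. The main obstacle, as I see it, will be exactly this final step: the chord bound $|v_5v_2|>\sqrt{2}$ alone only gives $|v_5v_1|+|v_1v_2|>\sqrt{2}$, which is strictly weaker than what is needed. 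The crux of the proof will be to exploit the reflex-angle geometry at the two notches --- which simultaneously controls their areas and ties the lengths of the two pentagon edges flanking each reflex vertex to each other --- in combination with the hull-containment of $P$, in order to promote the sum bounds into bounds on individual pentagon edges.
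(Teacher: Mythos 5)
Your adjacent-reflex case is correct, complete, and in fact cleaner than anything in the paper: the hull is the triangle $v_3v_4v_5$, two of whose sides are pentagon edges meeting at $v_4$, so its area is less than $\tfrac12(\sqrt2)(\sqrt2)=1$, contradicting hull containment. For comparison, the paper's entire proof is the one-line reduction ``straighten one reflex vertex to the midpoint of its chord and invoke Lemma~\ref{quad-quadrilateral-max-edge}''; that reduction has exactly the defect you flag, since the resulting quadrilateral $v_5v_2v_3v_4$ has the \emph{full} chord $v_5v_2$ as an edge, and nothing forces that chord below $\sqrt2$ (only the two half-chords are short), so Lemma~\ref{quad-quadrilateral-max-edge} is not actually contradicted. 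Your observation that the chord bounds $|v_5v_2|\ge\sqrt2$, $|v_2v_4|\ge\sqrt2$ cannot be promoted to bounds on individual pentagon edges is precisely where both your plan and the paper's argument stop working.

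That final step cannot be carried out, because the statement is false in the non-adjacent case. Take the triangle with vertices $(0,0)$, $(1.4,0)$, $(0.7,1.6)$: it has area $1.12$, base $1.4<\sqrt2$, and two equal sides of length $\sqrt{3.05}\approx 1.7464$. Replace each long side by a shallow inward notch made of two edges of length $0.875$ (possible since $2(0.875)=1.75>1.7464$); each notch removes area about $0.0488$, leaving a simple pentagon of area about $1.0223$ with two non-adjacent reflex angles of about $187^\circ$ each (average far below $3\pi/2$, hence Type~2 by Proposition~\ref{3typesnonconvex}) and edge lengths $1.4$, $0.875$, $0.875$, $0.875$, $0.875$, all below $\sqrt2$; rescaling to unit area only shrinks the edges further. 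The hull argument in this configuration forces the longest edge to exceed only about $2/\sqrt[4]{15}\approx 1.016$, not $\sqrt2$, and not even the $1.081$ needed where the lemma is applied in Proposition~\ref{nonconvex-type2-not-surrounded}. So the correct conclusion of your analysis is not that a cleverer combination of notch-area and angle constraints is needed, but that Lemma~\ref{ncvx-pent-max-edge} must either be restricted to Type~2 pentagons with adjacent reflex angles (where your proof works) or restated with a much weaker constant, with the downstream uses rechecked.
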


\begin{proof}
Assume to the contrary that there existed a unit-area non-convex pentagon with five edges less than $\sqrt{2}$. Replacing the non-convex angle with a straight line with a vertex in the center and scaling down to unit area would contradict Lemma \ref{quad-quadrilateral-max-edge}.
\end{proof}

Chung et al. \cite[Prop. 2.11]{tile11} prove that in a pentagonal tiling of a flat torus with perimeter per tile less than a Prismatic tiling, the ratio of convex to non-convex pentagons is greater than 2.6. We can immediately infer from their proof that the ratio of \emph{efficient} pentagons to non-convex pentagons is greater than 2.6. We further strengthen this result.

\begin{proposition}
\label{ratio1}
Let $T$ be a tiling of a flat torus by unit-area pentagons, with perimeter per tile less than or equal to half the perimeter of a Prismatic pentagon.
Then the fractions $C_1$, $N_1$, and $N_2$ of efficient, Type 1 non-convex, and Type 2 non-convex pentagons in the tiling satisfy $C_1 > 2.6N_1 + 13.4N_2$. 
\end{proposition}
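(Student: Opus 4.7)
The plan is to re-express the hypothesis on perimeter per tile as an average-tile-perimeter bound and then lower-bound the per-tile perimeter in each of the four possible tile classes. Since each edge is shared by two tiles, writing $P_P := 2\sqrt{2+\sqrt{3}}$ for the Prismatic perimeter, the hypothesis ``perimeter per tile at most $P_P/2$'' is equivalent to ``the average perimeter of the individual tiles is at most $P_P$.'' Letting $C_2 := 1 - C_1 - N_1 - N_2$ denote the fraction of convex but non-efficient pentagons and $p_c, p_{nc}, p_1, p_2$ the corresponding average tile perimeters, this reads
\begin{equation*}
C_1 \, p_c + C_2 \, p_{nc} + N_1 \, p_1 + N_2 \, p_2 \;\leq\; P_P.
\end{equation*}

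Next I would lower-bound each of the four averages. By Lemma~\ref{cot-perimeter-lemma}, every convex unit-area pentagon has perimeter at least $P_R := 2\sqrt{5\cot(3\pi/10)}$, the perimeter of the regular unit-area pentagon; since efficient pentagons are convex, this gives $p_c \geq P_R$. By the definition of ``efficient'' we have $p_{nc} > P_P$, and by Proposition~\ref{3typesnonconvex} we have $p_1 > 4$ and $p_2 > 2\cdot 3^{3/4} \approx 4.559$. Substituting these bounds and eliminating $C_2$ via $C_2 = 1 - C_1 - N_1 - N_2$ reduces the inequality to
\begin{equation*}
C_1 (P_R - P_P) + N_1 (4 - P_P) + N_2 (4.559 - P_P) \;<\; 0.
\end{equation*}
The strictness is automatic whenever $C_2 + N_1 + N_2 > 0$, because the strict lower bounds on $p_{nc}$, $p_1$, or $p_2$ propagate into the chain of inequalities; and in the remaining case $C_2 = N_1 = N_2 = 0$ the desired bound is trivial, since then $C_1 = 1 > 0$. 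Because $P_R < P_P < 4 < 4.559$, dividing by $P_P - P_R > 0$ yields
\begin{equation*}
C_1 \;>\; \frac{4 - P_P}{P_P - P_R}\, N_1 \;+\; \frac{4.559 - P_P}{P_P - P_R}\, N_2.
\end{equation*}

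Numerically $P_P \approx 3.864$ and $P_R \approx 3.812$, so the two coefficients are approximately $2.63$ and $13.43$, comfortably exceeding the claimed $2.6$ and $13.4$. The derivation is essentially a single arithmetic rearrangement once the four perimeter lower bounds are in place, and the only mildly delicate point is tracking strictness, which is handled as above. No serious obstacle is anticipated.
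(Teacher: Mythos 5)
Your proposal is correct and follows essentially the same route as the paper: both halve the edge-counting identity to bound the average tile perimeter by the Prismatic perimeter, lower-bound each of the four tile classes by the regular pentagon, the Cairo/Prismatic pentagon, the square, and the equilateral triangle respectively, and rearrange to get coefficients $\approx 2.63$ and $\approx 13.43$. Your extra care with strictness and the degenerate case $C_2=N_1=N_2=0$ is a minor refinement the paper leaves implicit.
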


\begin{proof}
We follow a similar proof given by Chung et al. \cite[Prop. 2.11]{tile11}. The perimeters of a regular pentagon, a Cairo/Prismatic pentagon, the unit square, and the unit-area equilateral triangle are $P_0 = 2\sqrt{5} \sqrt[4]{5 - 2\sqrt{5}}$, $P_1 = 2\sqrt{2 + \sqrt{3}}$, $P_2 = 4$ and $P_3 = 3 \sqrt{4/\sqrt{3}}$. Since each edge appears in the perimeter of two tiles, twice the perimeter per tile is at least 
$$
C_1 P_0 + C_2 P_1 + N_1 P_2 + N_2 P_3 \leq P_1 = (C_1 + C_2 + N_1 + N_2)P_1.
$$
Therefore,
$$
C_1 \geq N_1 \frac{P_2 - P_1}{P_1 - P_0} + N_2 \frac{P_3 - P_1}{P_1 - P_0} > 2.6 N_1 + 13.4 N_2.
$$
\end{proof}

Under certain conditions, the convexity hypothesis is easy to rule out, as in the following proposition.

\begin{proposition}
\label{angles-convex-between}
A unit-area tiling of a flat torus by non-convex pentagons and pentagons with angles strictly between $\pi/2$ and $2\pi/3$ has more perimeter per tile than half the perimeter of a Prismatic pentagon.
\end{proposition}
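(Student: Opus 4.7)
The plan is to combine a vertex-counting argument with Lindel\"of's lower bound. The key observation is that at any vertex of the tiling the corner angles sum to $2\pi$, while every corner contributed by a convex pentagon lies strictly in $(\pi/2,\,2\pi/3)$. If all $k$ corners at a degree-$k$ vertex were supplied by convex pentagons, their sum would lie strictly in $(k\pi/2,\,2k\pi/3)$: this is strictly less than $2\pi$ when $k=3$ and strictly greater than $2\pi$ when $k\geq 4$, so no $k\geq 3$ works. Hence every vertex of the tiling must contain at least one corner from a non-convex pentagon.

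Counting globally, equating the total vertex-angle sum $2\pi V$ to the total tile-angle sum $3\pi T$ gives $V=3T/2$. Each non-convex pentagon contributes exactly five corners and every vertex demands at least one non-convex corner, so the number $N$ of non-convex tiles satisfies $5N\geq V=3T/2$; hence the fraction $n:=N/T$ of non-convex tiles is at least $3/10$.

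Next I would lower-bound the average tile perimeter. By Lemma~\ref{cot-perimeter-lemma} and convexity of $\cot$ on $(0,\pi/2)$, a unit-area convex pentagon whose five angles lie in $(\pi/2,2\pi/3)$ and sum to $3\pi$ has perimeter at least that of the regular unit-area pentagon, $P_0=2\sqrt{5}\sqrt[4]{5-2\sqrt 5}\approx 3.8124$ (Jensen applied to the angles $a_i/2\in(\pi/4,\pi/3)$). By Proposition~\ref{3typesnonconvex} every unit-area non-convex pentagon has perimeter strictly greater than $4$. Since the tiling is edge-to-edge, twice the perimeter per tile equals the average tile perimeter, hence is at least $cP_0+4n$ with $c=1-n$. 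Because $4>P_0$, this is minimized at $n=3/10$, giving
\[
2\cdot\frac{\mathrm{perimeter}(T)}{T} \;\geq\; \frac{7P_0+12}{10}.
\]

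The main---and essentially only---obstacle is numerical: the argument reduces to checking $7P_0+12>10P_1$ where $P_1=2\sqrt{2+\sqrt 3}\approx 3.8637$. Numerically $7P_0+12\approx 38.687$ while $10P_1\approx 38.637$, a margin of only about $0.13\%$, so the inequality does hold but is genuinely tight. It can be certified cleanly by three-decimal-place bounds on $\sqrt 3$ and $\sqrt{5-2\sqrt 5}$, or by squaring twice to reduce to a polynomial inequality in these surds. The strict inequality then yields perimeter per tile strictly larger than $P_1/2$, as required.
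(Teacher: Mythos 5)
Your proof is correct, and it rests on the same key observation as the paper's: since every convex corner lies strictly in $(\pi/2,2\pi/3)$, no vertex can be filled entirely by convex pentagons, so every vertex must see a non-convex corner. Where you diverge is in the bookkeeping. The paper bounds the number of convex pentagons \emph{per vertex} (at most one at a vertex containing a reflex angle, at most three elsewhere, with at least $1/5$ of vertices reflex), concludes that the ratio of convex to non-convex pentagons is at most $2.6$, and contradicts Proposition \ref{ratio1}, which gives ratio strictly greater than $2.6$. You instead count non-convex corners against vertices ($5N\geq V$) and combine with the Euler/angle-sum relation $V=3T/2$ to get the density bound $N/T\geq 3/10$ (equivalently $C/N\leq 7/3$, in fact slightly stronger than the paper's $2.6$), then feed that directly into the edge-sharing perimeter inequality rather than routing through Proposition \ref{ratio1}. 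Your version is more self-contained and the counting is cleaner, since it needs no case split on reflex versus non-reflex vertices; the price is that the final numerical comparison $7P_0+12>10P_1$ is razor-thin and must be certified carefully --- but the paper's own comparison ($2.6$ from counting versus $(4-P_1)/(P_1-P_0)\approx 2.63$ from Proposition \ref{ratio1}) is equally tight, so this is not a defect relative to the original. All the supporting facts you invoke (perimeter of any unit-area convex pentagon at least $P_0$ by Lemma \ref{cot-perimeter-lemma}, perimeter of any unit-area non-convex pentagon greater than $4$ by Proposition \ref{3typesnonconvex}, and the factor of two from edge-to-edge sharing) are available in the paper and used correctly.
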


\begin{proof}
Assume, on the contrary, that there exists a unit-area tiling of a flat torus by non-convex pentagons and convex pentagons with angles strictly between $\pi/2$ and $2\pi/3$ which has less perimeter per tile than half the Prismatic pentagon's. By Proposition \ref{ratio1}, the ratio of convex pentagons to non-convex pentagons must be greater than 2.6. Since all the angles of the convex pentagons are strictly between $\pi/2$ and $2\pi/3$, there is at least one non-convex pentagon at each vertex. By definition, a non-convex pentagon has at least one angle greater than $\pi$. Thus at least $1/5$ of the vertices must contain an angle greater than $\pi$. At such vertices there is at most one convex pentagon. At the remaining vertices, there are at most three convex pentagons, because their angles are greater than $\pi/2$. Thus the ratio of convex pentagons to non-convex pentagons is at most $3(4/5)+1(1/5) = 2.6$.  This is a contradiction of Proposition \ref{ratio1}, which says the ratio of convex to non-convex pentagons must be strictly greater than 2.6.
\end{proof}

\begin{remark}
\emph{The reason we need the angles to be strictly between $\pi/2$ and $2\pi/3$ is that our argument depends on having no vertices completely covered by convex pentagons. We deal with other cases separately. Some special cases are easy to eliminate. For example, if a pentagon has two $3\pi/4$ angles and three $\pi/2$ angles, then the perimeter-minimizing pentagon has perimeter equal to about 3.91, which is more than the Prismatic pentagon's.}
\end{remark}

The next few propositions better describe efficient pentagons by bounding their angles and edge-lengths.

\begin{proposition}
\label{minmaxangle}
The interior angles $a_i$ of an efficient pentagon satisfy $80.91^\circ < a_i < 142.29^\circ$.
\end{proposition}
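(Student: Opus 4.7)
The plan is as follows. First I would reduce to the convex case: any non-convex pentagon has perimeter strictly greater than that of the unit square ($P=4$), hence greater than the Cairo/Prismatic perimeter $P_1 = 2\sqrt{2+\sqrt{3}} \approx 3.86$, so every efficient pentagon is convex and its angles lie in $(0,\pi)$ with $\sum_{i=1}^5 a_i = 3\pi$. Combining Lemma \ref{cot-perimeter-lemma} with the definition of efficient then gives the single scalar constraint
\[
\sum_{i=1}^{5} \cot(a_i/2) \;\leq\; 2+\sqrt{3}.
\]

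Next I would fix one angle $a_1 = a$ and minimize the remaining contribution over the other four angles. Differentiating $h(x) = \cot(x/2)$ twice yields $h''(x) = \tfrac{1}{2}\csc^2(x/2)\cot(x/2) > 0$ on $(0,\pi)$, so $h$ is strictly convex there. By Jensen's inequality applied to the four angles $a_2,\dots,a_5$ constrained to sum to $3\pi-a$,
\[
\sum_{i=2}^{5}\cot(a_i/2) \;\geq\; 4\cot\!\left(\tfrac{3\pi-a}{8}\right),
\]
with equality when all four are equal. Hence a necessary condition for $a$ to appear as an angle of an efficient pentagon is
\[
g(a) \;:=\; \cot(a/2) + 4\cot\!\left(\tfrac{3\pi-a}{8}\right) \;\leq\; 2+\sqrt{3}.
\]

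Then I would analyze $g$. A direct computation gives $g'(a) = -\tfrac{1}{2}\csc^2(a/2) + \tfrac{1}{2}\csc^2\!\bigl((3\pi-a)/8\bigr)$, which vanishes exactly when $\sin(a/2)=\sin((3\pi-a)/8)$ on the relevant range, i.e., $a/2 = (3\pi-a)/8$, giving the unique critical point $a = 3\pi/5 = 108^{\circ}$ (the regular-pentagon angle), where $g$ attains its minimum $5\cot(3\pi/10) \approx 3.633 < 2+\sqrt{3} \approx 3.732$. Since $g$ is strictly decreasing before this point and strictly increasing after, the admissible set $\{a : g(a)\leq 2+\sqrt{3}\}$ is a single interval, bounded by the two solutions of $g(a) = 2+\sqrt{3}$.

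Finally, I would solve the transcendental equation $g(a) = 2+\sqrt{3}$ numerically; one checks that the two roots are approximately $80.91^{\circ}$ and $142.29^{\circ}$, proving the proposition. The main obstacle is essentially computational rather than conceptual: one must verify carefully that the numerical endpoints $80.91^{\circ}$ and $142.29^{\circ}$ really bracket the admissible interval, and also confirm that the Jensen reduction is sharp in the sense that some actual efficient pentagon realizes (or nearly realizes) each extreme angle, so that the stated bounds cannot be tightened.
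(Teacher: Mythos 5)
Your proposal is correct and follows essentially the same route as the paper: both reduce, via the convexity of $\cot(x/2)$ in Lemma \ref{cot-perimeter-lemma}, to the circumscribed pentagon with one angle fixed and the other four equal, and then verify numerically that the threshold $2+\sqrt{3}$ is crossed at approximately $80.91^\circ$ and $142.29^\circ$. Your write-up merely makes explicit the Jensen step and the monotonicity of $g$ on either side of $108^\circ$ (hence that the admissible set is a single interval), details the paper leaves implicit; the sharpness check in your last sentence is not needed, since the proposition only asserts the bounds, not that they are attained.
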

  
\begin{proof}
By Lemma \ref{cot-perimeter-lemma}, it is enough to check the proposition when the smallest angle is 80.92 and the others are equal and, similarly, when the largest angle is 142.29 and the others are equal. At these values, the perimeter is about 3.8638 (to four decimal places), greater than the Prismatic perimeter of about 3.8637. 

\begin{figure}
	\centering
	\includegraphics[scale=0.6]{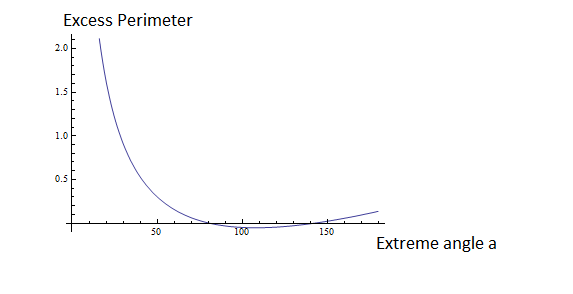}
	\caption{A pentagon with angles far from $108^\circ$ has lots of perimeter.}
	\label{fig:pentsmallangle}
\end{figure}

\end{proof}

\begin{corollary}
\label{convex-vertices-degnot-2}
An efficient vertex in a tiling must have degree equal to three or four.
\end{corollary}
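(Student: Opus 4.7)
My plan is a direct angle-sum argument using the bounds from Proposition \ref{minmaxangle}. At an efficient vertex of degree $n$, the $n$ surrounding angles come from efficient pentagons, so each lies in the open interval $(80.91^\circ, 142.29^\circ)$, and they sum to $360^\circ$.

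First I would rule out $n \leq 2$. Since each angle is strictly less than $142.29^\circ < 180^\circ$, two such angles sum to less than $360^\circ$, so degrees 1 and 2 are impossible. Next I would rule out $n \geq 5$. Since each angle is strictly greater than $80.91^\circ$, five such angles sum to more than $5 \cdot 80.91^\circ = 404.55^\circ > 360^\circ$, so no vertex of degree $5$ or higher can be efficient. This leaves $n \in \{3,4\}$, and both are consistent with the angle bounds (the averages $120^\circ$ and $90^\circ$ lie inside $(80.91^\circ, 142.29^\circ)$).

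There is no real obstacle here; the corollary is an immediate numerical consequence of Proposition \ref{minmaxangle}. The only thing to double-check is the strictness of the inequalities in that proposition, which rules out the borderline cases $n=2$ (which would force both angles to equal $180^\circ$) and $n=5$ at the lower extreme.
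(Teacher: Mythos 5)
Your argument is correct and is exactly the reasoning the paper intends: the corollary is stated without proof as an immediate consequence of Proposition \ref{minmaxangle}, and your angle-sum computation (two angles sum to less than $2\cdot 142.29^\circ < 360^\circ$, five angles sum to more than $5\cdot 80.91^\circ > 360^\circ$) fills in precisely that omitted step. No differences to report.
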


Figure \ref{fig:pentsmallangle} shows the excess perimeter over the Prismatic perimeter for pentagons circumscribed about a circle with one angle $a$ and the other angles equal.

We now provide an alternate proof of a proposition of Chung et al.:

\begin{proposition}
\label{min-edgelength-pent}
\cite[Lem. 3.6]{tile11}
The perimeter-minimizing unit-area pentagon with a given edge-length $e$ is the one inscribed in a circle with one edge of length $e$ and the other four equal.
\end{proposition}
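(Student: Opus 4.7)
The plan is a two-step variational argument. Label the pentagon's vertices $v_1, \ldots, v_5$ in cyclic order with $|v_1 v_2| = e$, and call the other four sides ``free.''

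The first step is to show that the four free sides are equal. Suppose two consecutive free sides, say $v_i v_{i+1}$ and $v_{i+1} v_{i+2}$, have unequal lengths. Translate $v_{i+1}$ along the line $\ell$ through its current position parallel to $v_i v_{i+2}$: this preserves the area of the triangle $v_i v_{i+1} v_{i+2}$ and hence the pentagon's total area. The level sets of $P\mapsto |P v_i|+|P v_{i+2}|$ are confocal ellipses with foci $v_i$ and $v_{i+2}$ and major axis along $v_i v_{i+2}$. Since $\ell$ is parallel to the major axis, the symmetry of each such ellipse across its minor axis forces any ellipse tangent to $\ell$ to be tangent at the point of $\ell$ on the perpendicular bisector of $v_i v_{i+2}$. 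Thus the sum $|v_i v_{i+1}|+|v_{i+1} v_{i+2}|$ is uniquely minimized along $\ell$ at the point equidistant from $v_i$ and $v_{i+2}$. At the optimizer this minimum must already be attained, so adjacent free edges are equal; chaining the argument over the three adjacent pairs yields $|v_2 v_3| = |v_3 v_4| = |v_4 v_5| = |v_5 v_1| =: s$.

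The second step is to show the pentagon is inscribed in a circle. I invoke the classical theorem (Cauchy, Steiner) that, among polygons with prescribed side lengths, the cyclic polygon strictly maximizes the enclosed area. Let $A(s)$ denote this maximum area for the side sequence $(e, s, s, s, s)$. Since $A$ is continuous and strictly increasing in $s$ (enlarging $s$ while holding $e$ fixed enlarges the circumscribing circle, hence the area), there is a unique $s_0$ with $A(s_0) = 1$. Any pentagon satisfying our constraints has area at most $A(s)$, forcing $s \geq s_0$, with equality if and only if the pentagon is the cyclic one with sides $(e, s_0, s_0, s_0, s_0)$. Hence the perimeter $e + 4s$ is uniquely minimized by this cyclic pentagon, proving the proposition.

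The main obstacle is Step 2, which rests on the classical but nontrivial cyclic-polygon theorem (proved, for example, by Cauchy's induction on diagonals or by a direct Lagrange-multiplier analysis on the interior angles). The strict monotonicity of $A(s)$ is a minor subsidiary point that can be verified by differentiating the central-angle/chord formulas for a cyclic pentagon, or by a short scaling-comparison argument. A minor technical point in Step 1 is ensuring that the small perturbation of $v_{i+1}$ keeps the pentagon simple, which it does for a sufficiently small translation; compactness/coercivity of the perimeter functional guarantees that a minimizer exists in the first place.
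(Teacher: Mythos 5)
Your proof is correct, but it routes around the problem differently from the paper. The paper first forces the minimizer to be cyclic (if not, inscribe it in a circle keeping all five edges, which strictly increases area at fixed perimeter, then rescale to unit area while holding $DE$ fixed by cutting the corner at $A$ --- replacing $EA\cup AA_i$ by the chord $EA_i$ --- which strictly decreases perimeter), and only afterwards equalizes the four free edges by the same increase-area-then-rescale mechanism, replacing a triangle $ABC$ by the isosceles triangle with the same base and the same sum of the other two sides. You reverse the order and change the mechanism at both steps. Your edge-equalization step is arguably cleaner: sliding the apex parallel to the base is an exactly area-preserving perturbation that strictly decreases the two-edge sum (your ellipse argument, or equivalently Heron's reflection), so no rescaling trick is needed. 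Your cyclicity step, by contrast, buys its conclusion at the price of an auxiliary lemma the paper does not need: you must know that the maximal (cyclic) area $A(s)$ for the side sequence $(e,s,s,s,s)$ is continuous and \emph{strictly increasing} in $s$, in order to convert ``cyclic maximizes area at fixed sides'' into ``cyclic minimizes $s$, hence perimeter, at fixed area.'' That monotonicity is true and checkable from the central-angle equations $e=2r\sin(\alpha/2)$, $s=2r\sin(\beta/2)$, $\alpha+4\beta=2\pi$, but as written it is asserted rather than proved; the paper's corner-cutting rescale is the device that lets it avoid this point entirely. Both arguments lean on the same classical fact that the cyclic polygon uniquely maximizes area among polygons with prescribed sides, both need a compactness argument for existence of a minimizer (which you state and the paper omits), and both wave away the possibility that a perturbation makes the pentagon non-simple or that the minimizer is degenerate. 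Net: a valid alternative proof; if you want it self-contained, supply the short monotonicity computation for $A(s)$.
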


\begin{figure}
		\includegraphics[scale=0.6]{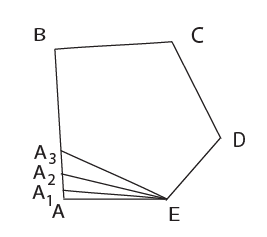}
	\caption{A method for scaling pentagon ABCDE down to unit-area, keeping edge DE fixed - replacing line segment $AE$ with $A_3E$ and decreasing perimeter.}
	\label{fig:MinGivenEdge}
\end{figure}

\begin{proof}
Let $ABCDE$ be a perimeter-minimizing pentagon with $DE$ of given length $l$. 
It is well known that for given edge-lengths, the $n$-gon inscribed in a circle uniquely maximizes area. If the $ABCDE$ were not inscribed in a circle, we can increase its area by inscribing it, keeping the edges, and thus the perimeter, fixed. We then scale back down to unit area keeping $DE$ fixed. One method to perform this is shown in Figure \ref{fig:MinGivenEdge} -- we simply replace line segment $EA$ with line segment $EA_i$ until we arrive at unit area. This decreases the perimeter, contradicting that $ABCDE$ is perimeter-minimizing. Therefore $ABCDE$ must be inscribed in a circle. 

Now we show that the other four edges must be equal. Suppose two such adjacent edges, say $AB$ and $BC$, have different lengths. We can replace triangle $ABC$ with an isosceles triangle $AB'C$ such that $|AB'| + |B'C| = |AB| + |AC|$, but $AB'C$ has greater area than $ABC$. We do not need to worry about $AB'C$ overlapping or bumping into another edge or vertex. Scaling down to unit-area but keeping $DE$ fixed (using the method in Figure \ref{fig:MinGivenEdge}) we have decreased the perimeter, a contradiction as $ABCDE$ is perimeter minimizing. Therefore the other four edge-lengths must be equal, and the proposition follows.
\end{proof}

\begin{lemma}
\label{quad-pentagon-min-edge}
A unit-area efficient pentagon cannot have an edge greater than 1.081 or less than .4073.
\end{lemma}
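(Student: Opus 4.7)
The plan is to apply Proposition \ref{min-edgelength-pent} to reduce the lemma to a one-parameter optimization problem. Given an edge of length $e$, the perimeter-minimizing unit-area pentagon with that edge is inscribed in a circle with the other four edges equal, say of length $s$. I would parameterize this family by the half-angle $\alpha \in (0,\pi/2)$ that the edge $e$ subtends at the center of the circumscribed circle. Then the other four edges each subtend half-angle $\beta = (\pi - \alpha)/4$, giving (for circumradius $R$)
\begin{equation}
e = 2R\sin\alpha, \qquad s = 2R\sin\beta, \qquad A = \tfrac{1}{2}R^2\bigl(\sin 2\alpha + 4\sin 2\beta\bigr).
\end{equation}
Setting $A = 1$ determines $R = R(\alpha)$, and hence determines both the edge-length $e(\alpha)$ and the perimeter $P(\alpha) = 2R(\sin\alpha + 4\sin\beta)$ as explicit functions of $\alpha$.

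Next I would observe that, by Lemma \ref{cot-perimeter-lemma} or by direct differentiation, $P(\alpha)$ attains its minimum value $P_0 \approx 3.81$ at $\alpha = \pi/5$, the regular pentagon, and increases monotonically as $\alpha$ moves away from $\pi/5$ in either direction, while $e(\alpha)$ is strictly increasing on $(0, \pi/2)$. Thus the set of $e$ for which the minimum perimeter does not exceed the Cairo/Prismatic perimeter $P_1 = 2\sqrt{2+\sqrt{3}} \approx 3.8637$ is a single interval $[e_{\min}, e_{\max}]$, whose endpoints are obtained by solving $P(\alpha) = P_1$ numerically for the two roots on either side of $\pi/5$.

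The remaining step is a straightforward numerical check: plug $e = 1.081$ and $e = 0.4073$ into the formulas above, solve for the corresponding $\alpha$, and verify that in both cases the minimum perimeter strictly exceeds $P_1$. Since by Proposition \ref{min-edgelength-pent} no unit-area pentagon with edge $e$ can have smaller perimeter than $P(\alpha(e))$, any pentagon with $e > 1.081$ or $e < 0.4073$ has perimeter greater than a Cairo pentagon's and is therefore not efficient.

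The main obstacle is merely the numerical verification at the two endpoints, since $R(\alpha)$ is only implicitly given by the area constraint and the equation $P(\alpha) = P_1$ must be inverted numerically; there is no difficulty of principle. One mild subtlety to flag is monotonicity of $P(\alpha)$ away from $\pi/5$: this should be established (or at least stated) to ensure that the exceptional set of ``efficient'' edge-lengths really is a single interval rather than a more complicated set, so that checking the two endpoints suffices.
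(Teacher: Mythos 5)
Your proposal is correct and takes essentially the same route as the paper: both reduce to the one-parameter family of inscribed pentagons with four equal sides via Proposition \ref{min-edgelength-pent}, impose unit area, numerically solve for the central angles at which the minimum perimeter equals the Cairo/Prismatic value, and convert back to edge-lengths. Your explicit flagging of the unimodality of $P(\alpha)$, needed to conclude that the efficient edge-lengths form a single interval, is a point the paper leaves implicit but does not change the argument.
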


\begin{proof}
By Proposition \ref{min-edgelength-pent},  for a given edge-length $e$, the pentagon $X$ inscribed in a circle with four equal sides and one side $e$ minimizes perimeter. Let $r$ be the radius of the circle and $\alpha$ be the angle between rays from the center of the circle to adjacent vertices of the four equal sides. Then the perimeter and area formulae for $P$ are:
$$
P = 8r \sin(\alpha/2) + 2r \sin(2\alpha),
$$
$$
A = 2r^2 \sin(\alpha) - (1/2)r^2 \sin(4\alpha).
$$

By assumption the area is one, and the perimeter of a Cairo/Prismatic pentagon is approximately $3.86$, our value for $P$.
Then solving for $r^2$ we get
$$
r^2 = \frac{3.86^2}{(8 \sin(\alpha/2) + 2 \sin(2\alpha))^2}
$$
and therefore
$$\alpha \approx 62.8942, 81.0705. 
$$
From this we conclude that the pentagon is efficient only when
$$
62.8941 < \alpha < 81.0706.
$$
Then $e = 2r\sin(2\alpha)$,
which implies that $.4073 < e < 1.081$.
\end{proof}

Recall that an angle $a_j$ tiles if given a sequence of angles $a_i$, there exist some positive integers $m_i$ including $m_j$ such that $\sum m_i a_i = 2 \pi$. Then for tilings by efficient pentagons and Type 2 non-convex pentagons we have the following:

\begin{proposition}
\label{four-angles-don't-tile}
Consider a unit-area tiling of a flat torus by efficient pentagons and Type 2 non-convex pentagons. Assume that each efficient pentagon has at most four angles which tile with efficient pentagons. Then the tiling has more perimeter per tile than half the perimeter of a Prismatic pentagon.
\end{proposition}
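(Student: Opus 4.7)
The plan is to argue by contradiction: assume the tiling has perimeter per tile at most half the perimeter of a Prismatic pentagon. Let $E$ and $M$ denote the numbers of efficient and Type 2 non-convex pentagons in the tiling. Since no Type 1 non-convex pentagons are present, Proposition~\ref{ratio1} specializes to the ratio lower bound $E > 13.4\,M$.

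I would then exploit the hypothesis. Because each efficient pentagon has at most four angles tiling with efficient angles, it has at least one \emph{non-tiling} angle, one that cannot be written as $2\pi$ minus a sum of efficient angles. Such an angle cannot appear at any efficient vertex, so it must appear at a non-efficient vertex (one containing at least one non-convex pentagon corner). Thus, letting $a$ denote the number of efficient pentagon corners located at non-efficient vertices, $a \geq E$.

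The next step is to bound $a$ from above by analyzing what can happen at a non-efficient vertex. I would split these vertices into those containing a reflex corner ($V_r$) and those whose non-convex corners are all convex ($V_c$). By Proposition~\ref{3typesnonconvex}, each Type 2 pentagon contributes $2$ reflex and $3$ convex corners, so $|V_r| \leq 2M$ (two reflex angles cannot share a vertex, since each exceeds $\pi$) and $|V_c| \leq 3M$ (each such vertex requires at least one of the $3M$ non-convex convex corners). Combined with the efficient-angle bound $>80.91^\circ$ from Proposition~\ref{minmaxangle}, at most $2$ efficient corners fit at each $V_r$ vertex (since the reflex angle leaves less than $\pi$ of angular room) and at most $4$ at each $V_c$ vertex, giving the naive bound $a \leq 4M + 12M = 16M$.

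Since $16M$ does not quite contradict $a \geq E > 13.4M$, I would sharpen the count using Proposition~\ref{efficient-pentagon-one-small-angle}: any two angles of an efficient pentagon average at least $\pi/2$, so each efficient pentagon has at most one angle strictly below $90^\circ$. At a $V_c$ vertex with four efficient corners, those angles average strictly below $90^\circ$, so at least one is a small ($<90^\circ$) angle; similarly, at a $V_r$ vertex with two efficient corners, both efficient angles lie below $90^\circ$. Globally, small efficient angles number at most $E$ (one per pentagon), while the total convex-angle mass of the Type 2 pentagons is strictly less than $180^\circ M$ (since each Type 2 pentagon's three convex angles sum to $540^\circ$ minus its reflex sum, which exceeds $360^\circ$). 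Double-counting these globally scarce resources against the per-vertex demands, in particular restricting the number of $V_c$ vertices that can saturate the four-efficient-corner count, should lower the upper bound on $a$ strictly below $13.4M$ and produce the contradiction. I expect this final reconciliation of the vertex-by-vertex angle count with the two global budget constraints to be the main technical obstacle.
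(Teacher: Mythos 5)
Your skeleton matches the paper's strategy: assume perimeter per tile at most half the Prismatic's, get $E > 13.4M$ from Proposition \ref{ratio1}, use the at-most-four-tiling-angles hypothesis to force every efficient pentagon to meet a non-efficient vertex, and then contradict the ratio bound with an upper bound on how many efficient pentagons the non-convex pentagons can absorb. The gap is in that upper bound. Your count of efficient corners at non-efficient vertices gives $a \le 16M$, which, as you note, does not contradict $E > 13.4M$, and the sharpening you sketch is precisely the step you leave undone (``should lower the upper bound \dots the main technical obstacle''). As written it also contains a false sub-claim: at a reflex vertex carrying two efficient corners the two efficient angles sum to less than $\pi$, so at least one is below $90^\circ$, but not necessarily both (e.g.\ $85^\circ + 94^\circ$), so the global budget of small angles is not charged twice at such vertices, and it is not clear the double-counting scheme pushes the bound below $13.4M$.

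The paper closes the count much more simply by organizing it per non-convex pentagon rather than per vertex, and by counting efficient \emph{pentagons} rather than efficient \emph{corners}. Around one Type 2 pentagon, Proposition \ref{minmaxangle} allows at most $4$ efficient pentagons at each of its $3$ convex corners and at most $2$ at each of its $2$ reflex corners; since the tiling is edge-to-edge, an efficient pentagon sharing an edge with the non-convex one appears at both endpoints of that edge, so $5$ of these are double-counted, giving at most $4(3)+2(2)-5 = 11$ efficient pentagons surrounding each non-convex pentagon. Since every efficient pentagon must be among those surrounding some non-convex pentagon, $E \le 11M < 13.4M$, contradicting Proposition \ref{ratio1}. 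If you replace your corner count by this pentagon count with the edge-to-edge correction, your argument closes without any appeal to the small-angle or angle-mass budgets.
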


\begin{proof}
Because the efficient pentagon has at most four angles which tile, it cannot be surrounded entirely by efficient pentagons. Therefore the ratio of efficient pentagons to non-convex pentagons is at most the maximum number of efficient pentagons which can surround a non-convex pentagon.

At the three angles of the non-convex pentagon which are less than $\pi$, there are at most four efficient pentagons. If there were five or more, then the angles of the efficient pentagon would be too small, in violation of Proposition \ref{minmaxangle}. By the same logic, there are at most two efficient pentagons at the two angles in the non-convex pentagon which is greater than $\pi$. Since the tiling is edge-to-edge, five of the efficient pentagons surrounding the non-convex pentagon appear at two vertices, and we must avoid double counting these. So the total number of efficient pentagons surrounding the non-convex pentagon is $4(3) + 2(2) - 5 = 11$. Then the ratio of efficient pentagons to non-convex pentagons is at most eleven to one. Therefore by Proposition \ref{ratio1} the tiling has more perimeter per tile than half that of a Prismatic pentagon.
\end{proof}

We have a similar result for certain Type 1 non-convex pentagons, though the proposition does not hold for all Type 1 non-convex pentagons.

\begin{proposition}
Consider a tiling of a flat torus by efficient pentagons and Type 1 non-convex pentagons with perimeter greater than 4.537. Assume that each efficient pentagon has at most four angles which tile with efficient pentagons. Then the tiling has more perimeter per tile than half the perimeter of a Prismatic pentagon.
\end{proposition}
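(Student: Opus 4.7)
My plan is to follow the argument of Proposition \ref{four-angles-don't-tile}, with the Type 1 case substituted for Type 2 and the perimeter hypothesis upgraded from the square's $P_2 = 4$ to $4.537$. Assume for contradiction that the tiling has perimeter per tile at most half that of a Prismatic pentagon.

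The first step is to derive a strong lower bound on the ratio $C_1/N_1$. Running the computation in Proposition \ref{ratio1} with $C_2 = N_2 = 0$ and using $4.537$ as the perimeter lower bound for Type 1 tiles (in place of $P_2 = 4$), I obtain
\[
\frac{C_1}{N_1} > \frac{4.537 - P_1}{P_1 - P_0},
\]
with $P_0 = 2\sqrt{5}\sqrt[4]{5-2\sqrt{5}} \approx 3.8119$ and $P_1 = 2\sqrt{2+\sqrt{3}} \approx 3.8637$. A direct check gives $14P_1 - 13P_0 \approx 4.5366 < 4.537$, so the right-hand side strictly exceeds $13$; the constant $4.537$ is evidently calibrated precisely to push the ratio past this integer threshold.

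Next I would establish a matching upper bound $C_1/N_1 \leq 13$ by the vertex-incidence argument of Proposition \ref{four-angles-don't-tile}. The hypothesis that each efficient pentagon has at most four tiling angles forces every efficient pentagon to share a vertex with at least one non-convex pentagon, so $C_1/N_1$ is bounded above by the largest number of distinct efficient pentagons that can surround a single Type 1 non-convex pentagon. By Proposition \ref{minmaxangle} every efficient interior angle exceeds $80.91^\circ$, so at most $4$ efficient tiles can meet at each of the four vertices of angle less than $\pi$, and at most $2$ at the single vertex of angle greater than $\pi$. After subtracting the $5$ efficient edge-neighbors that are double-counted across pairs of vertices (as in Proposition \ref{four-angles-don't-tile}), this gives $4\cdot 4 + 2 - 5 = 13$.

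The lower and upper bounds on $C_1/N_1$ together yield a contradiction, completing the proof. The main delicacy is numerical rather than structural: the constant $4.537$ is essentially tight, since only a small decrease in the Type 1 perimeter bound would push the ratio just under $13$ and destroy the contradiction. This is precisely why, in contrast to Proposition \ref{four-angles-don't-tile} for Type 2 pentagons (whose perimeter lower bound $4.559$ already clears the needed threshold), the present proposition must carry an extra perimeter hypothesis, and why the result fails for general Type 1 pentagons whose perimeters can approach the square's value $4$.
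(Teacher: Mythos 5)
Your proposal is correct and follows essentially the same route as the paper: an upper bound of $4\cdot 4 + 2 - 5 = 13$ on the ratio of efficient to Type 1 non-convex pentagons via the surrounding argument and Proposition \ref{minmaxangle}, played against the lower bound $(4.537 - P_1)/(P_1 - P_0) > 13$ from the perimeter-averaging computation. Your explicit check that $14P_1 - 13P_0 \approx 4.5367 < 4.537$, and your remark on why the extra perimeter hypothesis is unavoidable for Type 1 tiles, are welcome clarifications of details the paper leaves implicit (and your displayed inequalities correct two sign typos in the paper's version).
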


\begin{proof}
Because the efficient pentagon has at most four angles which tile, it cannot be surrounded entirely by efficient pentagons. Therefore the ratio of efficient pentagons to non-convex pentagons is at most the maximum number of efficient pentagons which can surround a non-convex pentagon.

At the four angles of the convex pentagon which are less than $\pi$, there are at most four efficient pentagons, otherwise the efficient pentagons would contradict Proposition \ref{minmaxangle}. By the same logic, there are at most two efficient pentagons at the angle which is greater than $\pi$. Since the tiling is edge-to-edge, five of the efficient pentagons surrounding the non-convex pentagon will appear at two vertices, and we need to avoid double counting these. Then the ratio of efficient pentagons to non-convex pentagons is $4(4) + 2 - 5 = 13.$

Assume such a tiling had perimeter per tile less than half that of a Cairo pentagon. Let $P_0$, $P_1$, and $P_2$ denote the perimeter of a regular pentagon, a Cairo/Prismatic pentagon, and 4.537. If there are $m$ convex pentagons and $n$ non-convex pentagons then by hypothesis
$$
mP_0 + nP_2 < (m+n)P_2,
$$
which implies
$$
\frac{m}{n} > \frac{P_1 - P_2}{P_0 - P_1} > 13,
$$
which is a contradiction. 
\end{proof}

The following will be useful in proving our main results, as it limits the perimeter of a certain class of efficient pentagons.

\begin{figure}
		\includegraphics[scale=0.4]{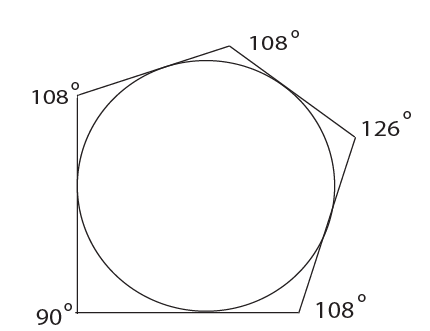}
	\caption{The perimeter-minimizing pentagon, with five angles that tile, at least one angle of which can tile a degree-four vertex.}
	\label{fig:FiveTileMinimizer}
\end{figure}

\begin{proposition}
\label{pminimizing5tileDeg4Vert}
The perimeter-minimizing pentagon $P$ with five angles that tile, at least one angle of which can tile a degree-four vertex, is the one circumscribed about a circle with one $90^\circ$ angle, three $108^\circ$ angles, and one $126^\circ$ angle, as in Figure \ref{fig:FiveTileMinimizer}. The perimeter is approximately $3.8414$. 
\end{proposition}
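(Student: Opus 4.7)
The plan is to combine Lindel\"of's theorem with a structural case analysis of the tiling constraints. First I would observe that the candidate pentagon in Figure \ref{fig:FiveTileMinimizer} has perimeter approximately $3.8414$, strictly less than the Prismatic perimeter $P_1 = 2\sqrt{2+\sqrt{3}} \approx 3.8637$, so any minimizer $P$ satisfies $\mathrm{perim}(P) \leq 3.8414$ and is therefore efficient. By Lemma \ref{cot-perimeter-lemma}, $P$ is circumscribed about a circle with $\mathrm{perim}(P) = 2\sqrt{\sum_{i=1}^5 \cot(a_i/2)}$; by Proposition \ref{minmaxangle}, every interior angle $a_i$ lies in $(80.91^\circ, 142.29^\circ)$.

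Next I would extract the main structural constraints. The degree-four-vertex hypothesis gives four angles of $P$ (with repetition) summing to $360^\circ$, forcing the smallest angle $a_1$ of $P$ to satisfy $a_1 \leq 90^\circ$. Since $\mathrm{perim}(P) < P_1$, Proposition \ref{efficient-pentagon-one-small-angle} rules out a second pentagon angle averaging $\pi/2$ with $a_1$ (which would force $P$ to be Cairo or Prismatic), so $a_1$ is the unique angle of $P$ not exceeding $90^\circ$ and the other four strictly exceed $90^\circ$. Because four angles each strictly above $90^\circ$ cannot sum to $360^\circ$, the degree-four vertex must contain $a_1$; and because $5 \times 80.91^\circ > 360^\circ$, no vertex can have degree five or more among efficient angles.

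I would then focus on the case $a_1 = 90^\circ$, where the combinatorics is cleanest. Here a degree-four vertex involving a large angle $a_j > 90^\circ$ would require three further angles each $\geq 90^\circ$ summing to $360^\circ - a_j < 270^\circ$, which is impossible; hence each large angle $a_j$ tiles at a degree-three vertex $a_j + x_j + y_j = 360^\circ$ with $x_j, y_j$ in the angle set. Organized by the multiplicities of distinct large-angle values (all four equal; three plus one; two pairs; all distinct), the tiling equations together with $\sum_{j \geq 2} a_j = 450^\circ$ form a small finite collection of linear systems. Checking each against the efficient range and the five-angle-set constraint leaves only one solution: the three-plus-one pattern with $a = 108^\circ$, $c = 126^\circ$, yielding $\{90^\circ, 108^\circ, 108^\circ, 108^\circ, 126^\circ\}$ with tilings $4 \times 90^\circ = 360^\circ$ and $108^\circ + 126^\circ + 126^\circ = 360^\circ$. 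A direct computation then gives perimeter $2\sqrt{\cot 45^\circ + 3\cot 54^\circ + \cot 63^\circ} \approx 3.8414$.

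The main obstacle is ruling out $a_1 < 90^\circ$. For each admissible composition of the degree-four vertex ($3a_1 + a_k = 360^\circ$, $2a_1 + a_k + a_l = 360^\circ$, or $a_1 + a_k + a_l + a_m = 360^\circ$), one writes down the forced partner angles and combines with the tiling equations for each large angle --- now potentially at degree three or four, since when $a_1 < 90^\circ$ a degree-four tile involving copies of $a_1$ becomes available for $a_j \in (90^\circ, 117.27^\circ)$. Each resulting configuration is then checked against the efficient range, against the requirement that all tiling partners lie in the five-element angle set, and against the perimeter bound $3.8414$. The guiding intuition is strict convexity of $\cot$ on $(0, \pi/2)$: via Jensen's inequality, $\sum \cot(a_i/2)$ is bounded below by $\cot(a_1/2) + 4\cot((540^\circ - a_1)/8)$, a quantity that increases as $a_1$ drops below $90^\circ$, while the rigid integer-combination tiling equations prevent the large-angle spread from compensating. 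This bookkeeping is finite but intricate, and is the most technically demanding step of the proof.
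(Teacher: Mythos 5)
Your overall strategy---pass to a circumscribed pentagon via Lemma \ref{cot-perimeter-lemma}, use the degree-four vertex and Proposition \ref{efficient-pentagon-one-small-angle} to force a unique smallest angle $a_1\leq 90^\circ$, then enumerate the integer tiling relations among the remaining angles---is essentially the paper's strategy, reorganized by the value of $a_1$ rather than by how many distinct angles appear at a degree-four vertex. But there are two concrete gaps. First, in your central case $a_1=90^\circ$ you list the multiplicity patterns of the four large angles as ``all four equal; three plus one; two pairs; all distinct,'' omitting the pattern two-equal-plus-two-unequal. That case is not vacuous: it contains an admissible competitor with all five angles tiling, namely angles $90^\circ$, $720^\circ/7$, $720^\circ/7$, $810^\circ/7$, $900^\circ/7$ (satisfying $4\cdot 90^\circ=x+2y=2z+y=360^\circ$, all large angles exceeding $90^\circ$), whose perimeter is only about $3.849$---within $0.008$ of the claimed minimum. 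It survives every filter you impose and must be found and beaten by direct computation; your enumeration as written never produces it. (You also never address existence of the minimizer, which the paper settles by compactness, though that is minor.)

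Second, the case $a_1<90^\circ$, which you defer as ``finite but intricate bookkeeping,'' is not closed by the tools you actually name. Your Jensen bound $2\sqrt{\cot(a_1/2)+4\cot((540^\circ-a_1)/8)}$ only exceeds $3.8414$ once $a_1$ drops below roughly $87^\circ$, so a nontrivial window below $90^\circ$ remains open, and you do not supply the argument that closes it. The paper closes this case cheaply using only the composition of the degree-four vertex, with no need to enumerate tiling relations for all five angles: if the vertex is $2a_1+a_k+a_l=360^\circ$, then $a_1+a_k+a_l<279.08^\circ$, so the two remaining angles average more than about $130.46^\circ$ and the perimeter is at least $3.88$; if it is $a_1+a_k+a_l+a_m=360^\circ$, the fifth angle is $180^\circ$, violating Proposition \ref{minmaxangle}; and if it is $3a_1+a_k=360^\circ$ one is left with a single one-parameter family in $a_1\in(80.92^\circ,90^\circ)$ whose minimum perimeter is checked to exceed $3.8414$. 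You would need to supply these reductions (or carry out in full the much larger enumeration you sketch) for the outline to become a proof.
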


\begin{proof}
First note that such a perimeter-minimizing pentagon must exist. Consider a sequence of such pentagons $P_n$ with perimeter converging to the infimum. We may assume that the pentagons are convex (see Definitions \ref{def:CairoPrismaticEfficient}). By standard compactness, the desired limit exists.  

Since by hypothesis the pentagon can tile a degree-four vertex, we have the following cases:
\newline
\newline
\noindent \textbf{Case 1: Exactly one angle tiles a degree-four vertex.}
\newline

Note that this implies there must be one $90^\circ$ angle and that the other four angles must tile a degree-three vertex. If the other four angles tiled a vertex of degree greater than or equal to five the pentagon would not be efficient by Corollary \ref{convex-vertices-degnot-2}. We proceed to cover all the cases with regards to which of the four non-right angles are equal to one another.

First note that these four angles cannot all be equal, as they would equal $(112.5^\circ)$, which does not tile with itself or $90^\circ$. 

Next suppose three of the other angles, $x$, are equal and one, $y$, is different. Then 
$$
3x + y = 450^\circ,
$$ 
as the angles of a pentagon sum to $540^\circ$. Note that as $x$ and $y$ are not equal to each other, they cannot both equal $120^\circ$. This implies they must tile together in some way. So either
$2x + y = 360^\circ$ or $x + 2y = 360^\circ$. 
In the first case $y = 180^\circ$, which violates Proposition \ref{minmaxangle}. The second has has $x = 108^\circ$ and $y = 126^\circ$, and is the pentagon $P$.

Next suppose there are two pairs of equal angles, denoted $x$ and $y$. Then $2x + 2y = 450^\circ$ and as before either $2x + y = 360^\circ$ or $x + 2y = 360^\circ$. 
In either case there are three $90^\circ$ angles and two $135^\circ$ angles. This pentagon has perimeter approximately $3.9132$, greater than the perimeter of $P$.

Next suppose two angles are equal and two are unequal. Let $x$ denote the equal angles and $y$ and $z$ the unequal ones. Then $2x + y + z = 450^\circ$. First note that if $x$, $y$, or $z$ tile with the $90^\circ$ angle, they will equal either $180^\circ$ (if there are two $90^\circ$ angles tiling with them) or $135^\circ$ (if there is one $90^\circ$ and two copies of $x$). The first case is not efficient by Proposition \ref{minmaxangle}. The second case does not have perimeter less than $P$ by Lemma \ref{cot-perimeter-lemma}. So $x$, $y$ , and $z$ tile together. If $x + y + z = 360^\circ$, this implies that $x = 90^\circ$, a contradiction that exactly one angle tiles a degree-four vertex. 
If $x = 120^\circ$ and it tiles with $y$ or $z$ the optimal pentagon  of this form is Cairo, which has perimeter greater than $P$. If $x$ doesn't tile with $y$ or $z$ then $y$ and $z$ tile together. Without loss of generality, assume $2y + z = 360^\circ$. Then since we also know $y + z = 210^\circ$ we conclude that $y$ must be be $150^\circ$, which means the pentagon is not efficient by Proposition \ref{minmaxangle}.  

So the possible cases for how $x$ tiles are either $2x + y = 360^\circ$ or $x + 2y = 360^\circ$, with $x \not= 120^\circ$ (if $x$ tiles with $z$ just switch the labels on $y$ and $z$). First note that if $2x + y = 360^\circ$ and $2z + x = 360^\circ$, $x = 180^\circ$ which contradicts Proposition \ref{minmaxangle}. Now if $2x + y = 360^\circ$ then additionally either $2y + z = 360^\circ$ or $2z + y = 360^\circ$. 
Since we know $2x + y + z = 450^\circ$, we can solve for $x, y$ and $z$. In both cases $y = 180^\circ$, which means the pentagon is not efficient by Proposition \ref{minmaxangle}.

Similarly if $x + 2y = 360^\circ$ either $2y + z = 360^\circ$ or $2z + y = 360^\circ$. Then we have two possible pentagons: one has $x = 108^\circ$, $y = 126^\circ$, and $z =108^\circ$, and so is just $P$, and the second has $x= 720^\circ/7$, $y= 900^\circ/7$, and $z = 810^\circ/7$, and perimeter greater than $3.849$. This completes the case when two angles are equal and two are unequal.

The final subcase is when all four angles are unequal. Then $w + x + y + z = 450^\circ$. We have two options with regard to how angle $x$ tiles - either $x + y + z = 360^\circ$ or $2x + y = 360^\circ$ (switching the labels on $x$ and $y$ allows us to assume this). 
If $x + y + z = 360^\circ$, then $w = 90^\circ$. As $x, y$ and $z$ are not equal, this implies the pentagon is not efficient.

Assume that 
$$
2x + y = 360^\circ.
$$
Since $w + x + y + z = 450^\circ$, we can express $x = z+w - 90^\circ$ and $y = 540^\circ - 2(z+w)$. Substituting in $s$ for $z + w$, we know that the perimeter-minimizing pentagon satisfying these requirements has angles measuring $90^\circ, s - 90^\circ, 540^\circ - 2s, s/2, s/2$. 
Note that here we assume $w = z$, and do not consider tiling by these two angles. While this violates the conditions on these pentagons, it provides a lower bound for pentagons of this general form. 

The only case when these pentagons have perimeter less than $3.8414$ is when $ 210^\circ < s < 216^\circ$, by applying Lemma \ref{cot-perimeter-lemma} and using Mathematica to calculate the perimeter for all $s$ in the allowable range. So $210^\circ < z+w < 216^\circ$. 
If $2z + w = 360^\circ$ or $z + 2w = 360^\circ$ then either $w$ or $z$ is greater than $144^\circ$, so the pentagon will not be efficient by Proposition \ref{minmaxangle}. Also note that $2x + z = 360^\circ$ and $2z + y = 360^\circ$ are not allowable, as $z$ is not equal to $x$ or $y$. If $z + 90^\circ + x = 90^\circ$ or $z + y + 90 = 360^\circ$, this implies that $x$ or $y$ equals $270^\circ - w$. 
The largest an angle can be in a pentagon with one $90^\circ$ angle and perimeter less than 3.8414 is $127^\circ$. This implies $x$ or $y$ is at least $143^\circ$, which implies that the pentagon is not efficient by Proposition \ref{minmaxangle}.

So either $2y + z = 360^\circ$ or $2z + x = 360^\circ$. Then given that $x + y + z + w = 450^\circ$ and $2x + y = 360^\circ$, we have three equations and four variables. Putting everything in terms of $w$, we get two pentagons. One has angles $x = (450^\circ-w)/2$, $y = 60^\circ+2w/3$ and $z = 240^\circ-4w/3$. The other has angles $x = 2(90^\circ+w)/3$, $y = 240^\circ-4w/3$ and $z = 150^\circ-w/3$. 
Using Lemma \ref{cot-perimeter-lemma} and Mathematica, we can plot the minimum perimeter in terms of $w$. We observe that for no value of $w$ will the perimeter be less than $3.8414$.

Then we have seen that no matter how the other four angles interact, pentagons with exactly one angle which can tile a degree-four vertex have perimeter greater than or equal to $P$.
\newline
\newline
\noindent \textbf{Case 2: Exactly two angles tile a degree-four vertex.}
\newline

If they are of the form $x + y = 180^\circ$, by Lemma \ref{efficient-pentagon-one-small-angle} the perimeter will be at least that of a Cairo pentagon's, which is greater than the perimeter of $P$. If they are of the form $x + 3y = 360^\circ$ and $x < y$ then the average of the two must be less than $90^\circ$, so the perimeter will be greater than that of a Cairo pentagon. If $y < x$ then for given $y$ the pentagon with least perimeter has angles $y, 360^\circ - 3y, (180^\circ + 2y)/3$. Then for values of $y$ from $80.92^\circ$ to $90^\circ$ - the only allowable ranges for $y$ - we can see graphically that the perimeter is greater than $3.84143$ using Mathematica. 

These are the only possible cases, therefore the perimeter-minimizing pentagon with exactly two angles that tile a degree-four vertex perimeter greater than or equal to $3.8414$.
\newline
\newline
\noindent \textbf{Case 3: Exactly three angles tile a degree-four vertex.}
\newline

In this case, we know that it is the case that $2x + y + z = 360$. Thus $x + y + z$ is at most $279.08^\circ$ by Proposition \ref{minmaxangle}. So the other two angles, $a$ and $b$, average at least $260.92^\circ$. The perimeter is minimized when $a = b$, i.e. when there are two angles measuring at least $130.46^\circ$. So the perimeter is at least $3.88$ by Lemma \ref{cot-perimeter-lemma}, and the pentagon will not be efficient.
\newline
\newline
\noindent \textbf{Case 4: Exactly four angles tile a degree-four vertex.}
\newline

Then the fifth angle must equal $180^\circ$, which violates Proposition \ref{minmaxangle}.
\end{proof}

\begin{proposition}
\label{type1-cvx-best}
The least-perimeter unit-area pentagon with angles not strictly between $\pi/2$ and $2\pi/3$ is circumscribed about a circle with one $2\pi/3$ angle and four $7\pi/12$ angles. It has perimeter approximately $3.819$.
\end{proposition}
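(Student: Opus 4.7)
The plan is to apply Lemma \ref{cot-perimeter-lemma} to reduce the problem to a finite-dimensional convex optimization over the angle simplex. First I would dispose of non-convex pentagons: by Proposition \ref{3typesnonconvex}, every unit-area non-convex pentagon has perimeter at least $4$, which exceeds the claimed $3.819$, so the minimum is attained among convex pentagons. For convex pentagons, Lemma \ref{cot-perimeter-lemma} reduces the problem to minimizing
\[
f(a_1,\ldots,a_5) \;=\; \sum_{i=1}^{5} \cot(a_i/2)
\]
over the open simplex $S = \{a : \sum a_i = 3\pi,\; 0 < a_i < \pi\}$ intersected with the feasible set $F = S \setminus E$, where $E = \{a \in S : \pi/2 < a_i < 2\pi/3 \text{ for all } i\}$ is the open convex subset of angle vectors excluded by the hypothesis.

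The function $\cot(t/2)$ has second derivative $\tfrac{1}{2}\csc^2(t/2)\cot(t/2) > 0$ on $(0,\pi)$, so $f$ is strictly convex on $S$ and attains its unique unconstrained minimum at the regular pentagon $a^{(0)} = (3\pi/5,\ldots,3\pi/5) \in E$. For any $a \in F$, the line segment from $a^{(0)}$ to $a$ exits $E$ at some point $a^{\ast} \in \partial E$, and strict convexity makes $f$ strictly increasing along this segment starting from $a^{(0)}$; hence $f(a^{\ast}) \leq f(a)$, with equality only if $a \in \partial E$. So any minimizer of $f$ over $F$ lies on $\partial E$, where at least one coordinate equals $\pi/2$ or $2\pi/3$. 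By symmetry I may assume this binding coordinate is $a_1$, and then strict convexity over the $4$-dimensional face $\{a_1 = \text{const}\} \cap S$ forces $a_2 = \cdots = a_5 = (3\pi - a_1)/4$ at the constrained minimum.

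This leaves only two candidate configurations: $(\pi/2,\, 5\pi/8,\, 5\pi/8,\, 5\pi/8,\, 5\pi/8)$ with $f = 1 + 4\cot(5\pi/16)$ giving perimeter approximately $3.833$, and $(2\pi/3,\, 7\pi/12,\, 7\pi/12,\, 7\pi/12,\, 7\pi/12)$ with $f = 1/\sqrt{3} + 4\cot(7\pi/24)$ giving perimeter approximately $3.819$. The second is smaller and matches the proposition; incidentally, configurations with two binding coordinates (for example, two right angles and three $2\pi/3$ angles, giving the Cairo value $2\sqrt{2+\sqrt{3}} \approx 3.864$) are automatically ruled out since they already have $f$ larger than the single-binding-coordinate minimum on their respective faces. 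The main obstacle is justifying the two reductions in the convex-optimization step (first that the minimum lies on $\partial E$, and then that only a single coordinate is binding with the remaining four equal), but both follow directly from strict convexity of $\cot(t/2)$, the one analytic fact needed beyond Lemma \ref{cot-perimeter-lemma}.
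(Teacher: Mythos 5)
Your proof is correct and follows essentially the same route as the paper: the paper's own argument invokes Lemma \ref{cot-perimeter-lemma}'s ``more nearly equal the angles, the smaller the perimeter'' principle to reduce immediately to the same two candidates (one $\pi/2$ angle with four $5\pi/8$ angles, versus one $2\pi/3$ angle with four $7\pi/12$ angles) and compares their perimeters, $\approx 3.833$ versus $\approx 3.819$. Your version simply makes explicit the convexity justification (minimizer on $\partial E$, single binding coordinate, remaining angles equal) and the dismissal of non-convex pentagons, both of which the paper leaves implicit.
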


\begin{proof}
This follows directly from Lemma \ref{cot-perimeter-lemma}: the more nearly equal the angles, the smaller the perimeter. Excluding pentagons with angles between $\pi/2$ and $2\pi/3$, we are left with the following two pentagnos as the optimal choices: a pentagon circumscribed about a circle with one $2\pi/3$ angle and four $7\pi/12$ angles, and a pentagon circumscribed about a circle with on $\pi/2$ angle and four $5\pi/8$ angles. Using Lemma \ref{cot-perimeter-lemma} to calculate the perimeter yields the result.
\end{proof}

%%%%%%%%%%%%%%%%%%%%%%%%%%%%%%%%%%%%%%%%%%%%%%%%%%
%%%%%%%%%%%%%%%%%%%%%%%%%%%%%%%%%%%%%%%%%%%%%%%%%%% SECTION 5 %%%%%%%%%%
%%%%%%%%%%%%%%%%%%%%%%%%%%%%%%%%%%%%%%%%%%%%%%%%%%

\section{Tilings of Pentagons and Quadrilaterals}

We now turn our attention to tilings by pentagons and quadrilaterals. In particular we consider tilings with one efficient pentagon and any number of non-convex quadrilaterals, though some of our results hold for tilings by efficient pentagons and non-convex quadrilaterals. Tilings with efficient pentagons, non-convex quadrilaterals, and non-efficient convex pentagons or convex quadrilaterals remain relatively unstudied, and we do not know whether our results might generalize to that case.

Recall from Proposition \ref{type1-cvx-best} we have the following:

\begin{repproposition}{type1-cvx-best}
The least-perimeter unit-area pentagon with angles not strictly between $\pi/2$ and $2\pi/3$ is circumscribed about a circle with one $2\pi/3$ angle and four $7\pi/12$ angles. It has perimeter approximately $3.819$.
\end{repproposition}

\begin{proposition}
\label{perimeterMinimizingDegreeFourPentagon}
The perimeter-minimizing unit-area pentagon which can tile a degree-four efficient vertex has perimeter at least 3.8328.
\end{proposition}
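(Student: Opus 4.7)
The plan rests on reading the hypothesis with the definition of ``tiles'' from Definitions~\ref{def:CairoPrismaticEfficient}: a unit-area pentagon $P$ with angles $a_1,\ldots,a_5$ can tile a degree-four efficient vertex if and only if $\sum_{i=1}^{5} m_i a_i = 2\pi$ for some nonnegative integers $m_i$ with $\sum m_i = 4$, and $P$ is itself efficient (since the vertex, tiled by four copies of $P$, is efficient). By Proposition~\ref{minmaxangle}, efficiency forces every $a_i \in (80.91^\circ, 142.29^\circ)$. I would then case-split on the integer partition of $4$ encoded by the multiplicity vector $(m_i)$: namely $(4)$, $(3,1)$, $(2,2)$, $(2,1,1)$, or $(1,1,1,1)$.

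First I would dispose of the degenerate partition $(1,1,1,1)$: four distinct pentagon angles summing to $2\pi$ force the fifth angle of $P$ to equal $\pi$, not a valid interior angle. For each remaining partition, Lemma~\ref{cot-perimeter-lemma} reduces perimeter minimization to a low-dimensional problem: circumscribe $P$ about a circle, and make any angles not pinned by the vertex equation equal to one another (using convexity of cotangent on $(0,\pi/2)$).

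The key case is $(4)$: here $4a_1 = 2\pi$ forces $a_1 = \pi/2$, so the remaining four angles equal $5\pi/8$, and the circumscribed pentagon has perimeter $2\sqrt{\cot(\pi/4)+4\cot(5\pi/16)} \approx 3.8328$, exactly the claimed bound. For each other case I would verify that the minimum strictly exceeds this value. In $(2,2)$, $a_1+a_2 = \pi$ with the remaining three angles equal to $2\pi/3$, so the perimeter equals $2\sqrt{\cot(a_1/2)+\tan(a_1/2)+\sqrt 3}$, and the identity $\cot x + \tan x = 2/\sin(2x) \geq 2$ yields perimeter at least $2\sqrt{2+\sqrt 3} \approx 3.864$, i.e., the Cairo value. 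Case $(2,1,1)$ reduces to $(2,2)$ by Jensen-type symmetrization $a_2 = a_3 = \pi - a_1$. Case $(3,1)$ reduces via $a_2 = 2\pi - 3a_1$ and the three remaining angles equal to $(\pi + 2a_1)/3$ to a one-variable optimization on the short interval $a_1 \in (80.91^\circ, 93.03^\circ)$; direct evaluation shows its minimum stays near $3.85$.

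The main obstacle is case $(3,1)$: unlike the symmetric cases $(4)$ and $(2,2)$ its minimum is attained in the interior of the feasible interval, not at a natural symmetric point, so one must check carefully that the perimeter stays above $3.8328$ across the whole admissible efficient-angle range---either by sampling several $a_1$ values and invoking convexity of the one-variable cotangent sum, or by computing the interior critical point and checking the boundaries $a_1 \to 80.91^\circ$ and $a_1 \to 93.03^\circ$. Once this check is pinned down, the $(4)$ case alone forces the bound, with equality attained at the pentagon circumscribed about a circle with one $\pi/2$ angle and four $5\pi/8$ angles.
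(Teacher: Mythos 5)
Your proposal is correct and lands on the same extremal pentagon and the same value $2\sqrt{\cot(45^\circ)+4\cot(56.25^\circ)}\approx 3.8328$, but it takes a considerably longer route than the paper. The paper's entire argument is a pigeonhole observation: the four angles meeting at a degree-four vertex sum to $2\pi$, so at least one angle of the pentagon is at most $90^\circ$; by Lemma \ref{cot-perimeter-lemma} the minimizer among pentagons with an angle at most $90^\circ$ is the one circumscribed about a circle with one $90^\circ$ angle and four $112.5^\circ$ angles. This single observation subsumes every one of your partition cases $(4)$, $(3,1)$, $(2,2)$, $(2,1,1)$, $(1,1,1,1)$ at once, since in each of them some angle appearing at the vertex is forced to be at most $90^\circ$ and no further structure is needed for a lower bound. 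What your finer case analysis buys is extra information the proposition does not ask for --- e.g.\ that the $(2,2)$ and $(2,1,1)$ configurations force the Cairo/Prismatic value $2\sqrt{2+\sqrt 3}$ via Proposition \ref{efficient-pentagon-one-small-angle}, and that $(3,1)$ sits near $3.85$ --- facts of the kind the paper only needs later, in Proposition \ref{pminimizing5tileDeg4Vert} and Theorem \ref{oneeffn&mTorus}. What it costs is the numerical verification over the interval $a_1\in(80.91^\circ,93.03^\circ)$ in case $(3,1)$ that you yourself flag as the main obstacle; the paper's uniform argument makes that check unnecessary. If you keep your structure, do note explicitly that assuming $P$ efficient is without loss of generality (an inefficient pentagon already has perimeter above $3.86>3.8328$), which you use implicitly when invoking Proposition \ref{minmaxangle} to bound the angle ranges.
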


\begin{proof}
In order to tile a degree-four vertex, the perimeter-minimizing pentagon must have at least one angle measuring at most $90^\circ$. By Lemma \ref{cot-perimeter-lemma}, the pentagon circumscribed about a circle with one angle measuring $90^\circ$ and four angles measuring $112.5^\circ$ is perimeter-minimizing, with perimeter greater than 3.8328.
\end{proof}

Recall from Section 2 we have Proposition \ref{pminimizing5tileDeg4Vert}:

\begin{repproposition}{pminimizing5tileDeg4Vert}
The perimeter-minimizing pentagon with five angles that tile and at least one angle which can tile a degree-four vertex is the one circumscribed about a circle with one $90^\circ$ angle, three $108^\circ$ angle, and one $126^\circ$ angle. This has perimeter approximately $3.8414$. 
\end{repproposition}

\begin{proposition}
\label{nonconvex-quad-not-surrounded}
In unit-area tiling by non-convex quadrilaterals and a single type of efficient pentagon, a non-convex quadrilateral cannot be surrounded by efficient pentagons.
\end{proposition}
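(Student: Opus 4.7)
The plan is to argue by edge-lengths, using the gap between the minimum edge-length of a non-convex quadrilateral and the maximum edge-length of an efficient pentagon.

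First I would invoke Lemma \ref{quad-quadrilateral-max-edge}, which guarantees that any unit-area non-convex quadrilateral has at least one edge of length at least $\sqrt{2} \approx 1.414$. Call this edge $e$. Next I would invoke Lemma \ref{quad-pentagon-min-edge}, which says every edge of a unit-area efficient pentagon has length strictly less than $1.081$. So there is a genuine numerical gap between the long edge $e$ of the quadrilateral and every possible edge of an efficient pentagon.

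Now suppose, for contradiction, that the quadrilateral is completely surrounded by efficient pentagons. Since the tiling is edge-to-edge (as stipulated in Definitions \ref{def:CairoPrismaticEfficient}), tiles meeting along $e$ must share the entire edge. Thus the efficient pentagon adjacent to the quadrilateral across $e$ must itself possess an edge of length $\geq \sqrt{2}$, contradicting Lemma \ref{quad-pentagon-min-edge}.

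I do not expect any serious obstacle: the two edge-length bounds already established do all the work, and the hypothesis that the tiling is edge-to-edge forces the pentagon across $e$ to match that length exactly. The hypothesis that only a ``single type'' of efficient pentagon appears is not even needed for this argument, since the edge bound in Lemma \ref{quad-pentagon-min-edge} applies uniformly to every efficient pentagon.
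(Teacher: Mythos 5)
Your argument is exactly the paper's proof: both rest on the gap between the $\sqrt{2}$ lower bound from Lemma \ref{quad-quadrilateral-max-edge} and the $1.081$ upper bound from Lemma \ref{quad-pentagon-min-edge}, with the edge-to-edge hypothesis forcing the contradiction. Your added observation that the ``single type'' assumption is unnecessary is correct but does not change the substance.
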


\begin{proof}
By Lemma \ref{quad-quadrilateral-max-edge}, some edge of each non-convex quadrilateral exceeds $\sqrt{2} > 1.41$, but by Lemma \ref{quad-pentagon-min-edge}, the longest edge in an efficient pentagon is less than $1.081$.
\end{proof}

The following proof is loosely based on a previous result by Chung et al. \cite[Lem. 3.6]{tile11}, who demonstrate the perimeter-minimizing pentagon with a given edge-length. We adapt their proof to find the perimeter-minimizing triangle with a given edge-length.

\begin{proposition}
\label{EdgeMinTriangle}
The perimeter-minimizing triangle with given edge-length $e$ is an isosceles triangle with base $e$.
\end{proposition}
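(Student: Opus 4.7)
The plan is to argue via a direct reflection argument analogous to the ones the paper has already used. Fix the edge of length $e$ as the base $AB$ of the triangle, placed with $A=(0,0)$ and $B=(e,0)$. Since the area is fixed (unit area, consistent with the paper's convention), the third vertex $V$ must lie on one of the two lines parallel to $AB$ at height $h=2/e$ from $AB$; by symmetry between the two sides of $AB$ we may restrict $V$ to the line $L=\{y=2/e\}$. The perimeter is then $e+|AV|+|BV|$, so minimizing perimeter reduces to minimizing the sum of distances $|AV|+|BV|$ as $V$ ranges over $L$.

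Next I would apply the standard reflection trick: reflect $A=(0,0)$ across $L$ to obtain $A'=(0,4/e)$. For every $V\in L$ we have $|AV|=|A'V|$, so
\[
|AV|+|BV|=|A'V|+|BV|\geq |A'B|,
\]
by the triangle inequality, with equality if and only if $V$ lies on the segment $\overline{A'B}$. Since $A'$ and $B$ lie on opposite sides of $L$, this segment meets $L$ in exactly one point, and solving for the intersection at height $2/e$ on the line from $(0,4/e)$ to $(e,0)$ yields $V=(e/2,2/e)$. Thus the unique minimizing third vertex lies directly above the midpoint of $AB$, giving $|AV|=|BV|$, i.e.\ an isosceles triangle with base $e$.

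Finally I would note that the construction makes no use of whether the resulting triangle is acute or obtuse, so no extra case analysis is needed; the result is immediate from the single reflection. The only mild subtlety to address in writing it up is justifying that we may indeed restrict the third vertex to one fixed side of $AB$, which follows at once from the mirror symmetry of the problem across the line containing $AB$. There is no real obstacle here; the proof is considerably simpler than that of Proposition \ref{min-edgelength-pent} because once one edge of a triangle is fixed in the plane, the location of the third vertex already determines the whole figure, so no inscribed-in-a-circle step or isosceles-replacement step is required.
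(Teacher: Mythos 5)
Your proof is correct, but it takes a genuinely different route from the paper's. The paper fixes the three edge-lengths, invokes the fact that among polygons with given edges the cyclic one maximizes area, writes the area of the circumscribed triangle as $\tfrac{1}{2}r^2\sum\sin\theta_i$ in terms of the central angles, and uses concavity of sine to conclude that equal central angles (hence equal legs) maximize area for fixed perimeter; the isosceles conclusion then follows by the usual rescaling duality between area-maximization and perimeter-minimization. You instead observe that once the base $AB$ of length $e$ is pinned down, the unit-area constraint confines the apex to a line parallel to $AB$ at height $2/e$, and a single reflection of $A$ across that line reduces the problem to Heron's shortest-path argument, locating the apex uniquely above the midpoint. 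Your argument is more elementary and, frankly, tighter: it avoids the paper's slightly garbled premise (``the perimeter-minimizing $n$-gon is the one inscribed in a circle'' for \emph{given} edge-lengths, where the perimeter is already determined), it makes the unit-area constraint explicit rather than implicit in a rescaling step, and it delivers existence and uniqueness of the minimizer for free. What the paper's approach buys is uniformity: it is the same cyclic-polygon machinery used for Proposition \ref{min-edgelength-pent} on pentagons, where your reflection trick would not apply because fixing one edge of an $n$-gon with $n>3$ leaves more than one free vertex. As a triangle-specific proof, yours is a clean and valid replacement.
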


\begin{proof}
It is well known that for given edge-lengths, $e_i$, $i = 1, 2, \ldots, n$, the perimeter-minimizing $n$-gon is the one inscribed in a circle. The area is given by
$$
\frac{1}{2}r^2\sum_{i = 1}^3 \sin \theta_i,
$$
where $\theta_i$ is the center angle corresponding to the $e_i$. Since sine is concave down from 0 to $\pi$, for a fixed perimeter the area will be maximized when the angles are equal. Therefore given one edge, the perimeter is minimized when the other two edges are equal.
\end{proof}

\begin{proposition}
\label{quadVertexBound}
For any tiling, any $m$ quadrilateral tiles have at least $m$ vertices.
\end{proposition}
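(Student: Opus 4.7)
The plan is a short angle-counting argument that works uniformly for convex and non-convex quadrilaterals, in the plane or on a torus. The starting point is the standard identity that every simple quadrilateral has interior angles summing to $2\pi$, with the convention that at a reflex corner of a non-convex quadrilateral the interior angle (measured inside the tile) exceeds $\pi$. Summing over the given $m$ quadrilateral tiles, the total interior angle is $2\pi m$.

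The second step exploits that the tiling is edge-to-edge, so every corner of a tile coincides with some vertex of the tiling. Let $V$ denote the set of tiling vertices that are corners of at least one of our $m$ quadrilaterals; the goal is to show $|V|\geq m$. At every $v\in V$, the interior angles of all tiles meeting at $v$ sum to exactly $2\pi$ since the tiles cover a punctured neighborhood of $v$ without overlap; in particular, the partial sum contributed by our $m$ chosen quadrilaterals at $v$ is at most $2\pi$. Summing this per-vertex inequality over $v\in V$ gives
\begin{equation*}
2\pi m \;=\; \sum_{v\in V}\bigl(\text{angles at }v\text{ from our quadrilaterals}\bigr) \;\leq\; \sum_{v\in V} 2\pi \;=\; 2\pi\,|V|,
\end{equation*}
and dividing by $2\pi$ yields $|V|\geq m$.

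I foresee no real obstacle. The only points requiring mild care are (i) confirming the $2\pi$ angle-sum identity for non-convex simple quadrilaterals, which is immediate from the $(n-2)\pi$ formula once reflex angles are measured on the interior side, and (ii) noting that the argument is genuinely metric-free, so it applies equally to tilings of a flat torus. The bound is tight, as witnessed by the square tiling of a flat torus by $n^2$ unit squares, where $|V|=n^2=m$.
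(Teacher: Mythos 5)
Your argument is correct and is the same angle-counting the paper uses: each quadrilateral contributes $2\pi$ of interior angle, each vertex absorbs at most $2\pi$, so $m$ quadrilaterals need at least $m$ vertices. You have simply written out carefully the double-counting that the paper's one-line proof compresses.
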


\begin{proof}
The angles of a quadrilateral sum to $2\pi$, and in a tiling a vertex measures exactly $2\pi$. Therefore each quadrilateral contributes one vertex, so the $m$ quadrilaterals contribute $m$ vertices (or portions of more than $m$ vertices).
\end{proof}

\begin{proposition}
\label{gendihedralSomeDeg4Verts}
In a tiling of a flat torus by efficient pentagons and non-convex quadrilaterals, if the ratio of efficient pentagon to non-convex quadrilaterals exceeds $14$ then the tiling will have at least one degree-four efficient vertex. 
\end{proposition}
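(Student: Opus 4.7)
The plan is to prove the contrapositive: assuming no efficient vertex has degree four, I would deduce that the ratio of efficient pentagons to non-convex quadrilaterals is at most $14$. By Corollary \ref{convex-vertices-degnot-2} every efficient vertex has degree three or four, so under the assumption all efficient vertices have degree exactly three. The argument then reduces to a double-counting with Euler's formula.

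First, I would compute the vertex count via Euler's formula on the torus. Let $p$ and $q$ denote the numbers of efficient pentagons and non-convex quadrilaterals. Since the tiling is edge-to-edge, $F = p+q$ and $2E = 5p + 4q$, so $V-E+F=0$ gives $V = (3p+2q)/2$. Next, I would partition the vertices into efficient vertices (all corners are pentagon corners), of count $V^e$, and mixed vertices (at least one quadrilateral corner), of count $V^m$, so $V^e + V^m = (3p+2q)/2$. Since each of the $4q$ quadrilateral corners lies at a mixed vertex and every mixed vertex absorbs at least one, $V^m \le 4q$.

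Then I would bound the number of pentagon corners at each vertex. By Proposition \ref{minmaxangle} every efficient pentagon interior angle exceeds $80.91^\circ$, and since $5 \cdot 80.91^\circ > 360^\circ$ no vertex can house more than four pentagon corners. Counting pentagon corners by vertex class gives
\begin{equation*}
5p \;=\; 3 V^e + \sum_{v \in V^m} n_v \;\le\; 3 V^e + 4 V^m,
\end{equation*}
where $n_v \le 4$ at each mixed vertex. Substituting $V^e = (3p+2q)/2 - V^m$ yields $V^m \ge (p-6q)/2$, and combining with $V^m \le 4q$ gives $p \le 14q$. Thus $p/q > 14$ forces the existence of at least one degree-four efficient vertex, as desired.

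The main obstacle to watch out for is the justification of the bound "at most four pentagon corners per vertex," which is where the efficiency hypothesis enters through Proposition \ref{minmaxangle}; the remaining steps are elementary angle counting together with the torus Euler relation. A minor bookkeeping point is distinguishing truly mixed vertices from purely quadrilateral ones (both satisfy $n_v \le 4$ and contribute to $V^m$), but this does not affect the inequalities.
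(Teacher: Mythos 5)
Your proof is correct, and it reaches the paper's threshold of $14$ by a genuinely different route. The paper bounds the number of \emph{inefficient} vertices by first invoking Proposition \ref{nonconvex-quad-not-surrounded} (each non-convex quadrilateral has an edge too long to abut any efficient pentagon, so the quadrilaterals must pair up along shared edges) and then counting at most $2(2)+4(4)-6=14$ efficient pentagon corners around each of the $m/2$ pairs; combining this with the Euler count $3n/2+m$ and the bound $k_3+k_4\le 3n/2$ from Proposition \ref{quadVertexBound} gives $k_4\ge n/2-7m$. You instead bound the number of mixed vertices directly by the number of quadrilateral corners, $V^m\le 4q$, and cap the pentagon corners at any single vertex at four via Proposition \ref{minmaxangle}; this sidesteps the edge-length machinery (Lemmas \ref{quad-quadrilateral-max-edge} and \ref{quad-pentagon-min-edge}) entirely, and if you track $k_4$ explicitly instead of passing to the contrapositive, your count yields the identical quantitative bound $k_4\ge p/2-7q$. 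Your version is more economical and rests on fewer prior results; the paper's pairing argument earns its keep elsewhere, since the same ``efficient pentagons per pair of quadrilaterals'' count is reused with different constants inside the proof of Theorem \ref{oneeffn&mTorus}, but for this proposition alone your argument is the cleaner one.
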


\begin{proof}
Assume a tiling of a flat torus by $n$ efficient pentagons and $m$ non-convex quadrilaterals. The area of the torus is $n + m$, so by the Euler characteristic formula there are $3n/2 + m$ vertices. By Proposition \ref{nonconvex-quad-not-surrounded}, the non-convex quadrilaterals cannot be surrounded entirely by efficient pentagons because at least one edge in each non-convex quadrilateral is too long to tile with any efficient pentagons. Therefore each non-convex quadrilateral shares an edge with at least one other non-convex quadrilateral, so there are at most 6 inefficient vertices for each pair of non-convex quadrilaterals. By Proposition \ref{minmaxangle}, there are at most $2(2) + 4(4) - 6 = 14$ efficient pentagons (subtracting six because we assume the tiling is edge-to-edge) surrounding each pair of non-convex quadrilaterals. Let $k_3$ and $k_4$ be the number of degree three and degree-four efficient vertices. Then counting each efficient vertex as one-fifth of a pentagon, we know
$$
(3/5)k_3 + (4/5)k_4 \geq n - (14/5)(m/2).
$$
Additionally
$$
3n/2 + m - m = 3n/2 \geq k_3 + k_4.
$$
as by Proposition \ref{quadVertexBound} the $m$ non-convex quadrilaterals contribute at least $m$ inefficient vertices. These bounds imply $k_4 \geq n/2 - 7m.$ Then $k_4 \geq n/2 - 7m > 0$ when $n > 14m$.
\end{proof}

\begin{proposition}
\label{genquad-bound-four-angles-don't-tile}
Consider a unit-area tiling of a flat torus by efficient pentagons and non-convex quadrilaterals. Assume that each efficient pentagon has at most four angles which tile with the efficient pentagon's angles. Then the tiling has a ratio of convex to non-convex pentagons less than or equal to 10.
\end{proposition}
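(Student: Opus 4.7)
The plan is to mirror the counting strategy of Proposition~\ref{four-angles-don't-tile}, adapted from non-convex pentagons to non-convex quadrilaterals. The key initial observation is that since each efficient pentagon has at most four angles that tile with efficient pentagon angles, at least one vertex of each efficient pentagon must lie on a non-convex quadrilateral. Consequently, to bound the ratio of efficient pentagons to non-convex quadrilaterals, it suffices to bound the maximum number of distinct efficient pentagons that can meet any single non-convex quadrilateral.

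To carry this out, I would analyze the contribution of each vertex of the quadrilateral separately. A non-convex quadrilateral has exactly one reflex vertex and three non-reflex vertices, since its angles sum to $2\pi$. At each of the three non-reflex vertices, the quadrilateral's interior angle is less than $\pi$, and since every efficient pentagon angle exceeds $80.91^\circ$ by Proposition~\ref{minmaxangle} (and $5 \cdot 80.91^\circ > 360^\circ$), at most four efficient pentagons can meet at such a vertex. At the reflex vertex the remaining angular space is less than $\pi$, and because $3 \cdot 80.91^\circ > 180^\circ$, at most two efficient pentagons can meet there. Summing yields at most $3(4) + 1(2) = 14$ efficient-pentagon incidences among the vertices of the quadrilateral.

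The final step is to correct for double counting. In an edge-to-edge tiling, any efficient pentagon that shares an edge with the non-convex quadrilateral contributes an angle at each of that edge's two endpoints, and so appears twice in the vertex tally. There are four edges of the quadrilateral, yielding at most four such doubly-counted efficient pentagons. Hence the number of distinct efficient pentagons meeting the quadrilateral is at most $14 - 4 = 10$. Combined with the opening observation and a double-counting (handshake) argument between efficient pentagons and non-convex quadrilaterals in the bipartite incidence graph, this yields the desired ratio bound.

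The main subtlety, such as it is, is verifying that the vertex maxima and the edge double-count are being combined consistently. Achieving $k_v = 4$ at a non-reflex vertex forces the tile-degree at $v$ to be at least five, which in turn requires both of the quadrilateral's edges incident to $v$ to be shared with efficient pentagons; similarly $k_v = 2$ at the reflex vertex forces both incident edges to be shared with efficient pentagons. Thus the vertex-maximum configuration is exactly the one that also maximizes the edge double-count, so the estimate $14 - 4 = 10$ is the correct quantity to report, and any deviation from this configuration only decreases the count.
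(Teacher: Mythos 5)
Your proposal is correct and follows essentially the same route as the paper's own proof: each efficient pentagon must meet a non-convex quadrilateral since it cannot be surrounded by efficient pentagons, at most four efficient pentagons fit at each of the three convex angles and at most two at the reflex angle by Proposition \ref{minmaxangle}, and subtracting four for edge-sharing double counts gives $4(3)+2(1)-4=10$. Your closing paragraph checking that the vertex maxima and the edge double-count are achieved simultaneously is a small addition in care beyond what the paper writes down, but the argument is the same.
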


\begin{proof}
Because the efficient pentagon has at most four angles which tile, it cannot be surrounded entirely by efficient pentagons. Therefore the ratio of efficient pentagons to non-convex quadrilaterals is at most the maximum number of efficient pentagons which can surround a non-convex quadrilateral.

At the three angles of the non-convex quadrilateral which are less than $\pi$, there are at most four efficient pentagons. If there were five or more, then the angles of the efficient pentagon would be too small, in violation of Proposition \ref{minmaxangle}. By the same logic, there are at most two efficient pentagons at the angle in the non-convex quadrilateral which is greater than $\pi$. Since the tiling is edge-to-edge, four of the efficient pentagons surrounding the non-convex quadrilateral appear at two vertices, and we must avoid double counting these. So the total number of efficient pentagons surrounding the non-convex pentagon is $4(3) + 2(1) - 4 = 10$. Then the ratio of efficient pentagons to non-convex quadrilaterals is at most ten to one.
\end{proof}

We adapt Proposition \ref{ratio1} to the case of dihedral tilings with quadrilaterals and pentagons.

\begin{proposition}
\label{generalQuadRatio}
In a tiling $T$ of a flat torus by a unit-area efficient pentagon and non-convex quadrilaterals, with perimeter per tile less than or equal to half the perimeter of a Prismatic pentagon, the ratio of efficient pentagons to non-convex quadrilaterals is greater than $31.1753$.
\end{proposition}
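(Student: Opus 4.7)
Plan: I adapt the argument of Proposition \ref{ratio1}. Since each edge is shared by two tiles, the hypothesis on perimeter per tile translates into
\[
C \, P_{\mathrm{eff}} + N \, P_{\mathrm{quad}} \leq (C+N) P_1,
\]
where $C$ and $N$ count the efficient pentagons and non-convex quadrilaterals, $P_1 = 2\sqrt{2+\sqrt{3}}$ is the Prismatic perimeter, and $P_{\mathrm{eff}}$, $P_{\mathrm{quad}}$ are the perimeters of the efficient pentagon and of a non-convex quadrilateral in $T$. Rearranging,
\[
\frac{C}{N} \geq \frac{P_{\mathrm{quad}} - P_1}{P_1 - P_{\mathrm{eff}}},
\]
so I need strong lower bounds on both $P_{\mathrm{quad}}$ and $P_{\mathrm{eff}}$.

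For $P_{\mathrm{quad}}$, taking the convex hull (a triangle of area at least one) and applying the isoperimetric inequality for triangles gives $P_{\mathrm{quad}} > P_3 := 3\sqrt{4/\sqrt{3}}$, the perimeter of a unit-area equilateral triangle; this is the same trick used in Proposition \ref{3typesnonconvex}. The crucial input is $P_{\mathrm{eff}} \geq 3.8414$, which is exactly the value from Proposition \ref{pminimizing5tileDeg4Vert}. To invoke that proposition, I must verify that the efficient pentagon in $T$ has five angles that tile and at least one angle tiling a degree-four vertex. For the first condition, a weak version of the ratio inequality using only $P_{\mathrm{eff}} \geq P_0$ (the regular pentagon perimeter) already forces $C/N > 10$, and Proposition \ref{genquad-bound-four-angles-don't-tile} in contrapositive form then gives the five-tiling-angles property. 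For the second condition, since the regular pentagon itself does not tile, any pentagon with five tiling angles is bounded away from it in angle space; the resulting strict improvement over $P_0$ bumps the ratio past $14$, at which point Proposition \ref{gendihedralSomeDeg4Verts} supplies the required degree-four efficient vertex.

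With the hypotheses of Proposition \ref{pminimizing5tileDeg4Vert} in hand, plugging $P_{\mathrm{eff}} \geq 3.8414$ and $P_{\mathrm{quad}} > P_3$ back into the ratio inequality yields
\[
\frac{C}{N} \geq \frac{P_3 - P_1}{P_1 - 3.8414} > 31.1753,
\]
as claimed. The main obstacle is the bootstrap step: the first application of the ratio inequality only produces $C/N > (P_3 - P_1)/(P_1 - P_0) \approx 13.5$, falling just short of the threshold $14$ demanded by Proposition \ref{gendihedralSomeDeg4Verts}. One must therefore quantify how far above $P_0$ the infimum perimeter among pentagons with five tiling angles actually lies, either by enumerating the highly constrained families of efficient pentagons whose angles each admit an integer combination summing to $2\pi$, or by ruling out all such pentagons from the narrow perimeter window $[P_0, \,3.814]$ forced by $C/N \leq 14$.
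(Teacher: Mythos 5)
Your overall architecture matches the paper's: a two-pass bootstrap in which a crude lower bound on the efficient pentagon's perimeter gives a preliminary ratio, that ratio unlocks Propositions \ref{genquad-bound-four-angles-don't-tile} and \ref{gendihedralSomeDeg4Verts}, and then Proposition \ref{pminimizing5tileDeg4Vert} supplies the sharp bound $3.8414$ that yields $31.1753$. But the gap you flag at the end is real and is exactly where your argument fails: using only $P_{\mathrm{eff}} \geq P_0$ with $P_0$ the regular pentagon's perimeter ($\approx 3.812$) gives $C/N > (P_3 - P_1)/(P_1 - P_0) \approx 13.4$, which clears the threshold $10$ for the five-tiling-angles property but not the threshold $14$ needed for Proposition \ref{gendihedralSomeDeg4Verts} to produce a degree-four efficient vertex. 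Without that vertex you cannot invoke Proposition \ref{pminimizing5tileDeg4Vert} at all, since one of its hypotheses is that some angle tiles a degree-four vertex. Your proposed repair --- that a pentagon with five tiling angles is ``bounded away'' from the regular pentagon in angle space, hence has quantifiably larger perimeter --- is plausible but is precisely the hard enumeration you defer; as written it is not a proof.

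The paper closes this gap differently and more cheaply. It first observes, via the vertex-counting argument of Proposition \ref{angles-convex-between}, that in a tiling with perimeter per tile at most half the Prismatic perimeter the efficient pentagon cannot have all five angles strictly between $\pi/2$ and $2\pi/3$ (otherwise every vertex would contain a non-convex tile and the ratio would be capped at $2.6$, contradicting Proposition \ref{ratio1}). Proposition \ref{type1-cvx-best} then says the least-perimeter pentagon with some angle outside $(\pi/2, 2\pi/3)$ has perimeter at least $3.819$, and plugging $P_0 = 3.819$ into the ratio inequality gives $n/m > 15.554 > 14$ on the first pass. That single substitution is the idea your proposal is missing; with it, the rest of your argument goes through essentially as the paper's does.
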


\begin{proof}
We adapt a proof given by Chung et al. \cite[Prop. 2.11]{tile11} and Proposition \ref{ratio1}. By Proposition \ref{angles-convex-between}, $T$ cannot have an efficient pentagon with angles strictly between $\pi/2$ and $2\pi/3$. 
Then by Proposition \ref{type1-cvx-best}, the least-perimeter unit-area pentagon with angles not strictly between $\pi/2$ and $2\pi/3$ has perimeter, $P_0$, greater than $3.819$. The perimeter of a Cairo/Prismatic pentagon is $P_1 = 2\sqrt{2+\sqrt{3}} < 3.86$. The convex pentagons have perimeter at least $P_0$ by definition, and the non-convex quadrilaterals have perimeter $P_2$ greater than an equilateral triangle ($3\sqrt{4 / \sqrt{3}}$). 

Let $n$ and $m$ denote the number of efficient pentagons and non-convex quadrilaterals. By hypothesis, 
$$ nP_0 + mP_2 < (n+m)P_1. $$
Therefore
$$ n/m > \frac{P_2-P_1}{P_1-P_0} \approx 15.554.$$

Proposition \ref{gendihedralSomeDeg4Verts} implies that the tiling must contain at least one degree-four efficient vertex, and Propostion \ref{genquad-bound-four-angles-don't-tile} implies that the efficient pentagon must have five angles which tile. Therefore by Proposition \ref{pminimizing5tileDeg4Vert}, the efficient pentagon cannot have perimeter exceeding $P_0' = 3.8414$. Substituting $P_0'$ in for $P_0$ yields $n/m > 31.1753$. 
\end{proof}

\begin{proposition}
\label{gen-quad-four-angles-don't-tile}
Consider a unit-area tiling of a flat torus by an efficient pentagon and non-convex quadrilaterals. Assume that each efficient pentagon has at most four angles which tile with efficient pentagon's angles. Then the tiling has more perimeter per tile than half the perimeter of a Prismatic pentagon.
\end{proposition}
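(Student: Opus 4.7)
The plan is to derive a contradiction by combining the two preceding propositions, which already provide tight lower and upper bounds on the ratio of efficient pentagons to non-convex quadrilaterals in such a tiling. Specifically, I would argue by contradiction: assume the tiling has perimeter per tile less than or equal to half the perimeter of a Prismatic pentagon, and show the two required bounds on the ratio are incompatible.

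First I would invoke Proposition \ref{generalQuadRatio}, which states that under the contradictory hypothesis (perimeter per tile $\le$ half of Prismatic), the ratio of efficient pentagons to non-convex quadrilaterals must exceed $31.1753$. Crucially, the proof of that proposition already uses Propositions \ref{gendihedralSomeDeg4Verts} and \ref{pminimizing5tileDeg4Vert} to force the efficient pentagon to have five angles that tile and perimeter bounded by $P_0' = 3.8414$, so no extra work is needed to produce the lower bound.

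Next I would invoke Proposition \ref{genquad-bound-four-angles-don't-tile}, whose hypothesis is exactly the standing assumption of our proposition: each efficient pentagon has at most four angles which tile with the efficient pentagon's angles. That proposition gives an upper bound of $10$ on the ratio of efficient pentagons to non-convex quadrilaterals, obtained by counting the maximum number of efficient pentagons that can fit around a single non-convex quadrilateral (bounded via Proposition \ref{minmaxangle}, with a correction to avoid double-counting along the four shared edge-to-edge vertices).

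Finally, the upper bound of $10$ contradicts the lower bound of $31.1753$, so the assumption that the perimeter per tile was at most half that of a Prismatic pentagon must fail, establishing the proposition. There is no real obstacle here: both ingredients are already in place, and this proof is essentially a one-line assembly of the two immediately preceding results. The only thing to double-check is that the hypothesis of Proposition \ref{genquad-bound-four-angles-don't-tile} matches ours verbatim (it does), and that Proposition \ref{generalQuadRatio}'s internal use of Proposition \ref{genquad-bound-four-angles-don't-tile} to derive $P_0' = 3.8414$ is not circular—it is not, because Proposition \ref{generalQuadRatio} uses that proposition only to justify the five-angle-tiling condition, not to bound the ratio.
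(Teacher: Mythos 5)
Your proposal matches the paper's proof exactly: the paper disposes of this proposition in one line, citing Propositions \ref{generalQuadRatio} and \ref{genquad-bound-four-angles-don't-tile}, and your assembly of the lower bound ($>31.1753$) against the upper bound ($\leq 10$) is precisely that argument, with the non-circularity check correctly resolved.
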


\begin{proof}
This follows immediately from Propositions \ref{generalQuadRatio} and \ref{genquad-bound-four-angles-don't-tile}.
\end{proof}

\begin{proposition}
\label{quadCan'tTileDeg3}
Let $P$ be a efficient pentagon with perimeter less than $3.8574$ and let $s$ and $a$ be angles in $P$ with the following properties:
\begin{enumerate}
\item All five angles of $P$ tile;
\item $s$ is strictly less than $90^\circ$;
\item $3s + a = 360$.
\end{enumerate}
Then $a$ cannot tile a degree three vertex with the angles in $P$.
\end{proposition}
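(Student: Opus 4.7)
The plan is to suppose for contradiction that $a$ tiles a degree-three vertex with angles of $P$, i.e., $a + x + y = 2\pi$ for some angles $x, y$ of $P$, so that $x + y = 3s$. Proposition \ref{minmaxangle} confines every angle of $P$ to $(80.91^\circ, 142.29^\circ)$; combined with $s < 90^\circ$ this forces $80.91^\circ < s < 90^\circ$ and $90^\circ < a < 117.27^\circ$. Two configurations yield immediate contradictions: $x = s$ gives $y = 2s > 161.82^\circ$, above the allowed range; and $x = y = a$ gives $3a = 2\pi$, i.e., $s = 80^\circ$, below it.

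The surviving cases split into Case B, where exactly one of $x, y$ equals $a$ (forcing the other to equal $6s - 360^\circ$, an angle of $P$, which restricts $s$ to $(80.91^\circ, 83.715^\circ)$ via Proposition \ref{minmaxangle}), and Case C, where neither $x$ nor $y$ equals $a$ (forcing both to be among the non-$\{s,a\}$ angles of $P$ with $x + y = 3s$). In Case B the remaining two angles of $P$ sum to $540^\circ - 4s$; in Case C the subcase $x = y$ forces $x = 3s/2$ with the remaining two angles of $P$ summing to $180^\circ + s/2$, while the subcase $x \neq y$ forces the fifth angle of $P$ to equal $180^\circ - s$. In each case I will apply Lemma \ref{cot-perimeter-lemma} together with the strict convexity of $\cot(\cdot/2)$ on $(0,90^\circ)$ to drive the two free angles toward equality, extracting an explicit lower bound on the circumscribed-pentagon perimeter as a function of $s$ alone.

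A direct numerical evaluation will then show each lower bound strictly exceeds $3.8574$ over its permissible range of $s$: Case B comfortably clears $3.87$, and the $x \neq y$ subcase of Case C clears $3.88$. The main obstacle is the tight subcase, Case C with $x = y = 3s/2$ and the remaining two angles equal to $90^\circ + s/4$; the corresponding bound $2\sqrt{\cot(s/2) + \cot((360^\circ - 3s)/2) + \cot(3s/4) + 2\cot(45^\circ + s/8)}$ attains a minimum near $s = 86^\circ$ of approximately $3.858$, only barely above the stated threshold. The number $3.8574$ in the proposition thus appears to be tuned precisely to lie just below this minimum, and a careful verification of the inequality on the closed interval $[80.91^\circ, 90^\circ]$, plausibly by Mathematica plotting as elsewhere in the paper, is the delicate step.
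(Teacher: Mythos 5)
Your proposal follows essentially the same route as the paper's proof: the same enumeration of the ways $a + x + y = 360^\circ$ can be realized by angles of $P$, the same use of Proposition \ref{minmaxangle} and Lemma \ref{cot-perimeter-lemma} with convexity to equalize the free angles, and the same numerical verification, and you correctly identify that the binding case is $a + 2x = 360^\circ$ (i.e.\ $x = y = 3s/2$), whose minimum perimeter sits just above $3.8574$ near $s \approx 85^\circ$. The only differences are cosmetic --- you bound the $x \neq y$ subcase directly where the paper reduces it to the $x = y$ case, and your figure of $3.88$ there is slightly optimistic (the minimum is nearer $3.875$) --- but every bound still clears $3.8574$.
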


\begin{proof}
As $P$ is efficient, by Proposition \ref{minmaxangle}, $80.92^\circ < s < 90^\circ$. From the given we know $a = 360-3s$. Let $x, y$ be two additional angles in $P$. Suppose that $a$ did tile a degree three vertex with the angles in $P$. Then there are five case, corresponding to the five ways $a$ can be combined with itself, $s$, and $x$ and $y$.
\begin{enumerate}
\item $a + 2s = 360$;
\item $a + s + x = 360$;
\item $a + x + y = 360$;
\item $a + 2x = 360$;
\item $2a + x = 360$.
\end{enumerate}

Case 1 can be easily ruled out, as $a + 3s = 360$ and $a + 2s = 360$ are true only when $s$ is zero and $a$ is $360^\circ$, which obviously violates the properties of $P$.

In Case 2, $360 - 2s + x = 360$ and therefore $x = 2s$. So $161.84 < x$, which by Proposition \ref{minmaxangle} implies that $P$ is not efficient.

By Lemma \ref{cot-perimeter-lemma}, Case 3 reduces to Case 4 as the perimeter will be lowest when $x = y$. Then we can solve for $x$ in terms of $s$ to get $x = 3/2s$ and therefore $540 - (s  + a + x) = (180 + s/2)/2$. Using Mathematica and Lemma \ref{cot-perimeter-lemma} to calculate the minimum perimeter of $P$ in terms of $s$ yields a perimeter greater than 3.8574.

In Case 5, $x = 6s - 360$. So $540 - (s + a + x)$ is $540-4s$, which means the remaining two angles in the pentagon average $(540-4s)/2$ (with perimeter minimized when the two are equal to the average.) Using Lemma \ref{cot-perimeter-lemma} and Mathematica to calculate the minimum perimeter of $P$ in terms of $s$ implies $P$ will never be efficient in this case. Therefore if the perimeter of $P$ is less than $3.8574$, none of these five cases are possible.
\end{proof}

Recall from Proposition \ref{efficient-pentagon-one-small-angle} we have the following:

\begin{repproposition}{efficient-pentagon-one-small-angle}
If two angles in a pentagon average less than $\pi/2$ then the pentagon cannot be efficient. If two angles average exactly $\pi/2$, the pentagon is efficient only if it is Cairo or Prismatic.
\end{repproposition}

\begin{theorem}
\label{oneeffn&mTorus}
A unit-area tiling of a flat torus by an efficient pentagon and non-convex quadrilaterals cannot have perimeter per tile less than half the perimeter per tile of a Cairo/Prismatic tiling.
\end{theorem}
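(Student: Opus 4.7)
The plan is to argue by contradiction. Suppose $T$ is an edge-to-edge unit-area tiling of a flat torus by $n$ copies of a single efficient pentagon $P$ together with $m$ non-convex quadrilaterals, with perimeter per tile at most half that of a Cairo pentagon. By Proposition \ref{generalQuadRatio} this forces $n/m > 31.1753$, and in particular $n > 14m$. Proposition \ref{gendihedralSomeDeg4Verts} then guarantees at least one degree-four efficient vertex, and the contrapositive of Proposition \ref{gen-quad-four-angles-don't-tile} forces all five angles of $P$ to tile with $P$'s own angles.

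Next I would pin down the structure of $P$. Its perimeter is at least $3.8414$ by Proposition \ref{pminimizing5tileDeg4Vert} and at most Cairo's $\approx 3.8637$ since $P$ is efficient. Proposition \ref{efficient-pentagon-one-small-angle}, together with the observation that a tiling whose only convex tiles are Cairo or Prismatic already has perimeter at least that of a Cairo/Prismatic tiling (because the non-convex quadrilaterals have strictly more perimeter per unit area), forces $P$ to have exactly one angle $s \leq 90^\circ$: at least one because $P$ tiles a degree-four vertex, at most one by efficiency. Every degree-four efficient vertex therefore contains at least one copy of $s$. When $s < 90^\circ$ and the configuration $3s + a = 360^\circ$ occurs at such a vertex, Proposition \ref{quadCan'tTileDeg3} rules out $a$ at any degree-three efficient vertex, and a case analysis using Proposition \ref{minmaxangle} and Lemma \ref{cot-perimeter-lemma} restricts which triples of $P$'s angles can sum to $360^\circ$ at degree-three efficient vertices.

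The heart of the argument is a double-counting upper bound on $n/m$. Let $V_3$, $V_4$, $V_q$ denote respectively the number of degree-three efficient, degree-four efficient, and non-efficient vertices. Euler on the torus gives $V_3 + V_4 + V_q = 3n/2 + m$; the supply of $s$-angles yields $V_4 \leq n$ (and typically much better, e.g.\ $V_4 \leq n/4$ for the pentagon of Proposition \ref{pminimizing5tileDeg4Vert}, where the only degree-four vertex uses four copies of $s$); and the structural analysis above bounds $V_3$ in terms of $n$. Counting pentagon-angle incidences gives $5n = 3V_3 + 4V_4 + \sum_v p_v$, where $p_v$ is the number of pentagon angles at the $v$-th non-efficient vertex. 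Since $5 \cdot 80.91^\circ > 360^\circ$ we have $p_v \leq 4$, and since each non-efficient vertex carries at least one of the $4m$ quadrilateral angle-incidences, $V_q \leq 4m$. Combining these inequalities produces $n/m \leq C$ for an explicit constant $C < 31.1753$, contradicting the lower bound.

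The main obstacle is the angle bookkeeping in the middle step. One must verify, for every efficient pentagon $P$ in the narrow perimeter window $[3.8414,\, 3.8637]$ satisfying the constraints, that the admissible three- and four-angle configurations summing to $360^\circ$ from $P$'s angles are limited enough to push the resulting upper bound below the $31.1753$ threshold. The perimeter-minimizer of Proposition \ref{pminimizing5tileDeg4Vert} with angles $90^\circ, 108^\circ, 108^\circ, 108^\circ, 126^\circ$ is the archetype, but the case analysis in its proof produces a handful of nearby candidates (for example, the pentagon with angles $90^\circ, 720^\circ/7, 720^\circ/7, 900^\circ/7, 810^\circ/7$ of perimeter just above $3.849$) which must also be handled. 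Lemma \ref{cot-perimeter-lemma} and Mathematica-assisted enumeration, as used repeatedly in Section 2, should keep the list short enough to finish by direct verification.
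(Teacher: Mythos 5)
Your overall strategy coincides with the paper's: a lower bound $n/m > 31.1753$ from perimeter accounting (Proposition \ref{generalQuadRatio}), an upper bound on $n/m$ from counting angle incidences at vertices, and a contradiction; the preliminary steps (existence of a degree-four efficient vertex, all five angles of $P$ tiling, exactly one angle $s \le 90^\circ$ outside the Cairo/Prismatic case) also match. But the step you yourself call ``the heart of the argument'' contains a genuine gap, and the count you actually describe does not close. In the unavoidable configuration $3s + a = 360^\circ$ with $s < 90^\circ$, the supply of $s$-angles gives only $V_4 \le n/3$, and feeding that into your Euler/incidence inequalities yields $n \le 42m$ (the paper's slightly different bookkeeping yields $n \le 60m$); neither contradicts $n > 31.1753m$. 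The contradiction in the paper comes from a \emph{second} count, of the companion angle $a$: since $a$ cannot tile a degree-three efficient vertex (Proposition \ref{quadCan'tTileDeg3}), it appears at most once per degree-four efficient vertex and at most three times per inefficient vertex, so the $n$ copies of $a$ satisfy $n \le n/2 + 6m + 9m$, i.e.\ $n \le 30m$, which finally contradicts the lower bound. You cite Proposition \ref{quadCan'tTileDeg3} but never integrate it into the counting, and instead assert without computation that the combination produces a constant below $31.1753$ --- which, for the inequalities you list, is false.

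There is a second, related omission: Proposition \ref{quadCan'tTileDeg3} applies only to pentagons of perimeter less than $3.8574$, and your admissible window $[3.8414,\,3.8637]$ is not contained in that range, so you are not entitled to invoke it. The paper earns the hypothesis in two stages: the weaker $s$-angle count gives $n \le 60m$, and a ratio of at most $60$ forces (via the perimeter inequality underlying Proposition \ref{generalQuadRatio}) the efficient pentagon's perimeter below $3.8458 < 3.8574$, after which the $a$-angle count applies. Finally, your closing plan of ``Mathematica-assisted enumeration'' of a ``short list'' of candidate pentagons is not obviously viable: the admissible pentagons with $3s + a = 360^\circ$ and five tiling angles form one-parameter families (as the case analysis in Proposition \ref{pminimizing5tileDeg4Vert} already shows), not a finite list, which is precisely why the paper argues uniformly through the two-stage angle count rather than by enumeration.
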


\begin{proof}
Assume there exists a unit-area tiling of a flat torus by $n$ efficient pentagons and $m$ non-convex quadrilaterals with perimeter per tile less than a Cairo/Prismatic tiling. Note that $m \not= 0$, otherwise the tiling would contradict Theorem \ref{chungthm}. The area of the torus is $n + m$, so by the Euler characteristic formula there are $3n/2 + m$ vertices. By Proposition \ref{nonconvex-quad-not-surrounded}, a non-convex quadrilateral cannot be surrounded entirely by efficient pentagons because at least one edge in the non-convex quadrilateral is too long. Therefore each non-convex quadrilateral shares an edge with at least one other non-convex quadrilateral, so there are at most six inefficient vertices for each pair of non-convex quadrilaterals. Then we may assume that there are at most $3m$ inefficient vertices, so the number of efficient vertices is at least 
$$
\frac{3n}{2} + m - 3m = \frac{3n}{2} - 2m.
$$
Let $k_3$ and $k_4$ be the number of degree-three and degree-four efficient vertices. By Corollary \ref{convex-vertices-degnot-2}, these are the only two types of efficient vertices which can appear in the tiling. Therefore
$$
k_3 + k_4 \geq \frac{3n}{2} - 2m.
$$
Additionally, since there are $n$ efficient pentagons, considering each vertex as a fifth of a pentagon we conclude
$$
\frac{3}{5} k_3 + \frac{4}{5} k_4 \leq n.
$$
Thus $k_3 \geq n - 8m$ and $k_4 \leq n/2 + 6m$.

Now by Proposition \ref{generalQuadRatio}, it must be the case that $n > 31.1753m$, as the tiling has perimeter per tile less than a Cairo pentagon's. Therefore by Proposition \ref{gendihedralSomeDeg4Verts}, there exists at least one efficient vertex of degree-four in the tiling. So there must be at least one angle, $s$, in the efficient pentagon which measures less than or equal to $90^\circ$. 

\begin{figure}
		\centering
	\includegraphics[scale=0.5]{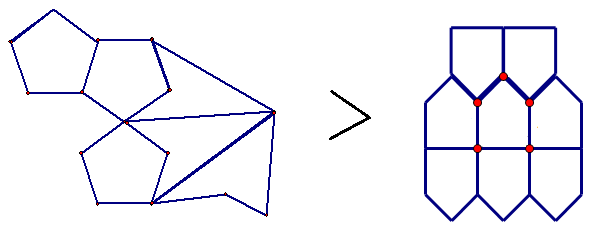}
    \caption{A tiling with an efficient pentagon and non-convex quadrilaterals is always worse than a Prismatic tiling.}
    \label{fig:nonconvexfig}
\end{figure}

Suppose $s$ is the only angle which tiled an efficient vertex of degree four. Then $s = 90^\circ$. Therefore at the large angles of the non-convex quadrilateral there can be at most one efficient angle, and at small angles there can be at most three. As the non-convex quadrilaterals are paired because of their edge-lengths, there are at most $2(1) + 4(3) - 6 = 8$ efficient pentagons (subtracting six because we assume the tiling is edge-to-edge) per pair of non-convex quadrilaterals. Therefore counting each efficient pentagon as one-fifth of a vertex, we know
$$
(3/5)k_3 + (4/5)k_4 \geq n - (8/5)(m/2).
$$
Additionally
$$
3n/2 + m \geq k_3 + k_4,
$$
as there cannot be more efficient vertices than total vertices in the tiling. We can use these two inequalities to determine that $k_4 \geq n/2 - 7m.$ 

We will show this implies there are too many $s$ angles in the tiling. As $s$ is the only angle that can tile a degree-four efficient vertex, there are at least $4k_4 \geq 2n - 28m$ $s$ angles. But as there cannot be more than $n$ $s$ angles, we have that $n \geq 4k_4$, and so $n \geq 2n-28$. Then $28m \geq n$. But since $n > 31.1753m$ by Proposition \ref{generalQuadRatio} this inequality is false, i.e. there are more than $n$ $s$ angles. As this is a contradiction, there must be at least two angles which tile a degree-four vertex.

Next we show there cannot be more than one angle other than $s$ which tiles a degree four efficient vertex. Let $a$, $b$, and $c$ be three angles in the efficient pentagon different than $s$. If $a + b + c + s = 360^\circ$ then the four angles average $90^\circ$ and the pentagon is not efficient by Proposition \ref{efficient-pentagon-one-small-angle}. If $2s + a + b = 360^\circ$, then by Lemma \ref{cot-perimeter-lemma} the perimeter is minimized when two angles measure $(360 - 2s)/2$, two measure $(180 + s)/2$, and one measures $s$. By definition $s \leq 90^\circ$ and by Proposition \ref{minmaxangle} $s > 80.92^\circ$; it follows from Lemma \ref{cot-perimeter-lemma} that the pentagon is not efficient. The case when $2a + s + b = 360^\circ$ is similar: just replace $s$ with $a$ and let $a$ range from $80.92^\circ$ to $142.29^\circ$ by Proposition \ref{minmaxangle}. As before, the pentagon is not efficient in this case. Therefore only one angle, say $a$, can tile a degree four vertex with $s$.

If there are two $s$ and two $a$ angles, then they average exactly $\pi/2$. Since the pentagon is efficient, by Lemma \ref{cot-perimeter-lemma} and Proposition \ref{efficient-pentagon-one-small-angle} it must be a Cairo or Prismatic pentagon, and the proposition follows as $m \not= 0$. 

Therefore there are either three $s$ or three $a$ angles at a degree-four efficient vertex. First suppose there are three $a$ angles and one $s$ angle. Then $3a + s = 360$. By Proposition \ref{minmaxangle}, $s$ must be greater than $80.92^\circ$, and by hypothesis less than or equal to $90^\circ$. It follows that the average of $a$ and $s$ is between $86.973^\circ$ and $90^\circ$. Then by Proposition \ref{efficient-pentagon-one-small-angle}, the efficient pentagon must be Cairo or Prismatic and the proposition follows as $m \not= 0$. Therefore it must be the case that $3s + a = 360$. 

Because the tiling is there is only one type of efficient pentagon, there are exactly $n$ $s$ angles. At all $k_4$ vertices there will be three $s$ angles. By Proposition \ref{minmaxangle}, there are at most $2(2) + 4(4) - 6 = 14$ efficient pentagons surrounding each pair of non-convex quadrilaterals. Therefore counting each efficient vertex as one-fifth of a pentagon, we know
$$
(3/5)k_3 + (4/5)k_4 \geq n - (14/5)(m/2).
$$
Additionally
$$
3n/2 + m \geq k_3 + k_4
$$
which we can use to determine that $k_4 \geq n/2 - 10m.$

Since there are three $s$ angles at each $k_4$ vertex, there are at least $3k_4 \geq 3n/2 - 30m$ $s$ angles. But as there cannot be more than $n$ $s$ angles, we have that $n \geq 3k_4$, and so $n \geq 3n/2 - 30m$, which means $n \leq 60m$.

In order to have a ratio of efficient pentagons to non-convex quadrilaterals less than or equal to 60, the efficient pentagon must have perimeter less than $3.8458$. Here it is convenient to note that since the tiling has perimeter per tile less than Cairo/Prismatic, by Proposition \ref{genquad-bound-four-angles-don't-tile} the efficient pentagon must have five angles which tile. Thus the efficient pentagon satisfies the hypothesis of Proposition \ref{quadCan'tTileDeg3}, so it will not be the case that $a$ tiles an efficient vertex of degree three. Therefore $a$ appears only at degree-four efficient vertices and inefficient vertices.

There will be at most one $a$ angle at each degree-four vertex, and at most three at each non-efficient vertex, as $a$ is greater than $90^\circ$. Since $k_4 \leq n/2 + 6m$ and the number of inefficient vertices is at most $3m$, the number of $a$ angles in the tiling is at most $n/2 + 6m + 3(3m)$, and therefore 
$$
n/2 + 6m + 9m \geq n. 
$$
Solving this yields $30m \geq n$; if this is not the case there will not be enough $a$ angles in the tiling. But this contradicts Proposition \ref{generalQuadRatio}: that the ratio of efficient pentagons to non-convex quadrilaterals must be at least $48.14$. Therefore it must be the case that the perimeter per tile is greater than or equal to that of a Cairo/Prismatic pentagon.
\end{proof}

\begin{remark}
\emph{Note that the proof of Theorem \ref{oneeffn&mTorus} assumes at points that the non-convex quadrilaterals are spread out in the tiling, and at other points assumes they are densely packed. This is done in order to ensure the bounds on the number of inefficient vertices are correct. The proof may be improved if these assumptions are either avoided or treated in some way.}
\end{remark}

\begin{remark}
\emph{ An earlier draft of this paper showed the above results only for dihedral tilings with a single efficient pentagon and a single non-convex quadrialteral. We tried several other initial approaches to generalize the above proposition before finding the current one. Initially we attempted to find ratios of quadrilaterals which shared an edge with the efficient pentagons and those that didn't, as the first type have high perimeter. Additionally we attempted to find the perimeter-minimizing pentagon with five angles that tile but were unable to do so.}
\end{remark}

\section{Dihedral Tiling of Efficient and Type 2 Non-Convex Pentagons}

We adapt the technique used in Section 5 to the dihedral case with one efficient pentagon and one Type 2 non-convex pentagon. 

First recall from Section 2 that we have the following lemma:

\begin{replemma}{ncvx-pent-max-edge}
For a unit-area Type 2 non-convex pentagon, some edge must be greater than or equal to $\sqrt{2}$.
\end{replemma}

\begin{proposition}
\label{nonconvex-type2-not-surrounded}
In a unit-area dihedral tiling by Type 2 non-convex pentagons and efficient pentagons, a non-convex pentagon cannot be surrounded by efficient pentagons.
\end{proposition}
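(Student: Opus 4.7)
The plan is to imitate the proof of Proposition \ref{nonconvex-quad-not-surrounded} essentially verbatim, exploiting the incompatibility between the minimum long edge of a Type 2 non-convex pentagon and the maximum possible edge of an efficient pentagon. The whole argument should be one short paragraph: it is a direct consequence of the two edge-length lemmas already in the excerpt combined with the edge-to-edge hypothesis.

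First I would invoke Lemma \ref{ncvx-pent-max-edge} to extract from the surrounded Type 2 non-convex pentagon a particular edge $e$ of length at least $\sqrt{2}\approx 1.414$. Next I would invoke Lemma \ref{quad-pentagon-min-edge}, which bounds every edge of a unit-area efficient pentagon by $1.081$, and observe that $1.081<\sqrt{2}$. Since the tiling is assumed edge-to-edge (Definitions \ref{def:CairoPrismaticEfficient}), the tile meeting the Type 2 pentagon along $e$ must share that entire edge. If every neighbor of the Type 2 pentagon were efficient, then in particular the neighbor across $e$ would be an efficient pentagon with an edge of length at least $\sqrt{2}$, contradicting Lemma \ref{quad-pentagon-min-edge}. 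Hence the Type 2 pentagon cannot be completely surrounded by efficient pentagons.

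There is essentially no obstacle here; the only subtlety worth flagging is that the edge-to-edge hypothesis is genuinely needed, since otherwise the long edge of the non-convex pentagon could be subdivided by several shorter edges of neighboring efficient pentagons. Because the paper's global conventions already include edge-to-edge tilings, I would simply cite this in passing rather than belabor the point. The statement that the \emph{dihedral} restriction enters the hypothesis is not actually used in the argument, so I would note after the proof (or silently leave it) that the same proof works without the dihedral assumption, matching the structure of Proposition \ref{nonconvex-quad-not-surrounded}.
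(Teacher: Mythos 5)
Your proposal is correct and matches the paper's own proof, which likewise just combines Lemma \ref{ncvx-pent-max-edge} ($\sqrt{2}$ lower bound on some edge of a Type 2 non-convex pentagon) with Lemma \ref{quad-pentagon-min-edge} ($1.081$ upper bound on efficient pentagon edges) under the edge-to-edge convention. Your added remarks that the edge-to-edge hypothesis is what makes the comparison bite, and that the dihedral assumption is not actually used, are accurate but not needed.
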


\begin{proof}
By Lemma \ref{ncvx-pent-max-edge}, some edge of the non-convex pentagon exceeds $\sqrt{2} > 1.41$. But by Lemma \ref{quad-pentagon-min-edge}, the longest edge in an efficient pentagon is less than $1.081$.
\end{proof}

\begin{proposition}
\label{type2NonconvexPerimeterMin}
In a dihedral tiling by an efficient pentagon and a Type 2 non-convex pentagon, the non-convex pentagon has perimeter at least 4.93594.
\end{proposition}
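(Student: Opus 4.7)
The plan is to use the forced long edge of $N$ together with the dihedral tiling structure to lower-bound the perimeter of $N$ via a geometric optimization.

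By Lemma~\ref{ncvx-pent-max-edge} some edge $e$ of $N$ satisfies $|e| \geq \sqrt{2}$. Since by Lemma~\ref{quad-pentagon-min-edge} every edge of the efficient pentagon has length less than $1.081 < \sqrt{2}$, the edge $e$ cannot be matched with any edge of the efficient pentagon in the edge-to-edge tiling, so $e$ must be shared with a second copy of $N$ along an edge of matching length. This is essentially the input that Proposition~\ref{nonconvex-type2-not-surrounded} supplies, and it already gives useful structural information on where a long edge of $N$ must live in the tiling.

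I would then reduce to the following optimization: among unit-area Type~2 non-convex pentagons that have an edge of length at least $\sqrt{2}$ and admit such a dihedral pairing, minimize the perimeter. By Proposition~\ref{3typesnonconvex} the convex hull of $N$ is a triangle $T$ with area at least $1$, and $N$ is obtained from $T$ either by replacing one side of $T$ by a three-edge path through two adjacent reflex vertices (Case~1, adjacent reflex) or by replacing two sides of $T$ by two-edge paths through non-adjacent reflex vertices (Case~2, non-adjacent reflex). In each of these two cases I would further subcase on whether the long edge $e$ is a preserved edge of $T$ or lies inside a reflex notch, parametrize $N$ by the vertices of $T$ together with the positions of the reflex vertices, write the perimeter as a function of those parameters, and minimize subject to unit area and $|e| \geq \sqrt{2}$, bounding the hull perimeter by Proposition~\ref{EdgeMinTriangle} and the excess contributions of the notches directly.

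The main obstacle is that, without any tiling hypothesis, the infimum of the perimeter among such pentagons is roughly $\sqrt{2} + \sqrt{10} \approx 4.576$, attained in the degenerate limit where the reflex angles approach $\pi$ and $N$ flattens onto an isosceles triangle of base $\sqrt{2}$ and area $1$; this is strictly below $4.93594$. To close the gap one must use the dihedral pairing quantitatively: the angle of $N$ at each endpoint of $e$ combined with the matching angle from the adjacent copy of $N$, together with any further angles from efficient pentagons, must sum to $2\pi$ at that vertex. Since by Proposition~\ref{minmaxangle} the efficient pentagon's angles lie in $(80.91^\circ,\,142.29^\circ)$ and the reflex angles of $N$ lie in $(\pi,\,3\pi/2)$, only finitely many angle combinations are admissible at these vertices, and each of them forces the reflex angles of $N$ to be bounded away from $\pi$ by a definite amount. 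Combining this angular rigidity with the geometric optimization via Proposition~\ref{EdgeMinTriangle} and Lemma~\ref{cot-perimeter-lemma} yields the desired bound $\operatorname{perim}(N) \geq 4.93594$.
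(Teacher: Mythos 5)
Your proposal does not close. The decisive step --- that the admissible angle combinations at the endpoints of the shared long edge ``force the reflex angles of $N$ to be bounded away from $\pi$ by a definite amount,'' and that this rigidity then yields the specific constant $4.93594$ --- is asserted, not carried out. It is also not clear it can be carried out as described: the two copies of $N$ may meet along $e$ at \emph{convex} angles (e.g.\ both near $60^\circ$, the vertex being completed by two efficient angles near $120^\circ$, which Proposition \ref{minmaxangle} permits), so the endpoints of $e$ need not see any reflex angle at all, and the reflex vertices of $N$ sit at other vertices of the tiling where they may be completed by angles of further copies of $N$ as well as by efficient angles. Enumerating and excluding all of these configurations is a substantial case analysis that the sketch does not begin, and no computation is offered showing that whatever separation of the reflex angles from $\pi$ survives this analysis actually translates into the perimeter bound $4.93594$ rather than some weaker number.

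The paper's own proof is much shorter and built from a different datum: the constant $4.93594$ is precisely the perimeter of the unit-area isosceles triangle with base $1.081$, where $1.081$ is the \emph{upper} bound on the edge-lengths of an efficient pentagon from Lemma \ref{quad-pentagon-min-edge}, not the lower bound $\sqrt{2}$ on the longest edge of $N$. The paper asserts that $N$ must have an edge measuring $1.081$ and applies Proposition \ref{EdgeMinTriangle} with $e = 1.081$. Your observation that optimizing only under the constraint ``some edge $\geq \sqrt{2}$'' gives an infimum near $4.576$ (indeed the unconstrained equilateral triangle, at $4.559$, already has all edges longer than $\sqrt 2$) is correct and is exactly why evaluating the isosceles formula at $\sqrt{2}$ cannot reach the target; but your argument never engages with the $1.081$ edge length, which is the ingredient the stated constant is actually computed from. (Whether the paper's one-line justification for taking $1.081$ as the base is itself airtight is a separate issue, since the isosceles-triangle perimeter is decreasing in the base up to the equilateral value $\approx 1.52$, so a lower bound on an edge does not by itself give the claimed inequality; but in any case your route, as written, does not reach the stated bound.)
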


\begin{proof}
By Lemma \ref{quad-pentagon-min-edge}, the largest an edge of a unit-area efficient pentagon can be is 1.081. Therefore a non-convex pentagon in a dihedral tiling with an efficient pentagon must have an edge measuring at least 1.081. By Proposition \ref{EdgeMinTriangle}, the perimeter-minimizing unit-area triangle given one edge-length, $e$, is the isosceles triangle with base $e$. Since the non-convex pentagon will have perimeter greater than or equal to this triangle, we simply solve for the perimeter of such a triangle with edge-length 1.081, which is at least 4.93594.
\end{proof}

\begin{proposition}
\label{type2dihedralSomeDeg4Verts}
In a dihedral tiling of a torus by efficient pentagons and Type 2 non-convex pentagons, if the ratio of efficient pentagons to non-convex pentagons exceeds 24, then the tiling will have at least one degree-four efficient vertex. 
\end{proposition}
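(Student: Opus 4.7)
The plan is to adapt the counting argument of Proposition \ref{gendihedralSomeDeg4Verts} from quadrilaterals to pentagons. Let $n$ denote the number of efficient pentagons and $m$ the number of Type 2 non-convex pentagons. Because every tile has five edges, Euler's formula on the torus gives a total vertex count of $3(n+m)/2$. Writing $k_3$ and $k_4$ for the numbers of degree-three and degree-four efficient vertices (these being the only possibilities by Corollary \ref{convex-vertices-degnot-2}), the goal is to derive the lower bound $k_4 \geq n/2 - 12m$, which will force $k_4 > 0$ whenever $n > 24m$.

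The main inequality will come from counting efficient-pentagon corners at inefficient vertices. By Proposition \ref{nonconvex-type2-not-surrounded}, every non-convex pentagon shares at least one edge with another, so I can group them into pairs, each pair sharing at least one internal edge. For such a pair, the boundary carries $5+5-2 = 8$ distinct vertices. Since two angles larger than $\pi$ cannot coexist at a single vertex, the four large angles of the pair live at four distinct vertices, leaving four small-angle vertices. By Proposition \ref{minmaxangle} each efficient-pentagon angle exceeds $80.91^\circ$, so at most two efficient-pentagon corners fit at any large-angle vertex and at most four at any small-angle vertex. This gives at most $4(2) + 4(4) = 24$ efficient corners around each pair; summing over the $m/2$ pairs bounds the total efficient corners at inefficient vertices by $12m$. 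Since the efficient pentagons contribute $5n$ corners in all,
$$
3k_3 + 4k_4 \ \geq\ 5n - 12m.
$$
Complementarily, the non-convex pentagons' angles sum to $3\pi m$, and since each vertex of the tiling holds exactly $2\pi$ in angles, they force at least $3m/2$ inefficient vertices; hence
$$
k_3 + k_4 \ \leq\ \frac{3(n+m)}{2} - \frac{3m}{2}\ =\ \frac{3n}{2}.
$$
Multiplying the second inequality by $3$ and subtracting it from the first eliminates $k_3$ and yields $k_4 \geq n/2 - 12m$, from which the proposition is immediate.

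The main obstacle I anticipate is rigorously justifying the ``four large plus four small'' decomposition of the eight boundary vertices of a pair, independent of how the two pentagons are glued along their shared edge. One must verify that however many large or small angles each pentagon contributes to the two shared vertices, the resulting tally of large-angle versus small-angle boundary vertices is never worse than $4+4$, so that $4(2)+4(4)=24$ remains a valid upper bound on the number of surrounding efficient-pentagon corners. A secondary concern is that larger clusters of non-convex pentagons (more than two sharing edges) only reduce the boundary length and the number of surrounding efficient pentagons, so the pair estimate is in fact the worst case.
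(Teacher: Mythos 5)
Your proof is correct and follows the same skeleton as the paper's: Euler's formula gives $3(n+m)/2$ vertices, Proposition \ref{nonconvex-type2-not-surrounded} forces the non-convex tiles into edge-sharing pairs, Proposition \ref{minmaxangle} bounds the efficient presence around each pair, and two linear inequalities in $k_3,k_4$ combine to give $k_4 \geq n/2 - 12m$. The bookkeeping differs in two exactly compensating ways. First, you count efficient \emph{corners} ($5n$ in total, at most $2$ at each of the four large-angle vertices and $4$ at each of the four small-angle vertices, hence at most $24$ per pair), whereas the paper counts efficient \emph{pentagons} per pair and runs a short case analysis (on whether every large angle admits two efficient tiles) to shave the corner count to $23$ before subtracting $8$ for edge-to-edge double counting, arriving at $15$ pentagons per pair; this makes the paper's first inequality stronger than yours by $(9/2)m$. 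Second, you recover exactly that ground in the complementary inequality by subtracting the $3m/2$ inefficient vertices forced by the $3\pi m$ of non-convex angle --- the pentagonal analogue of Proposition \ref{quadVertexBound}, which the paper deploys in the quadrilateral case (Proposition \ref{gendihedralSomeDeg4Verts}) but omits here, using only $k_3+k_4 \leq 3(n+m)/2$. After multiplying by $3$ the two trades cancel and both routes land on $k_4 \geq n/2 - 12m$. The obstacle you flag about the shared vertices is not a real one: two angles exceeding $\pi$ cannot meet at a single vertex, so the four large corners occupy four distinct vertices, and at a shared vertex carrying two non-convex corners the bounds of $2$ and $4$ hold a fortiori. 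The one genuine soft spot --- which your write-up shares with the paper's --- is the reduction to disjoint pairs: minimum degree one in the adjacency graph of non-convex tiles does not produce a perfect matching (consider a star of four non-convex pentagons), so a fully rigorous version should bound the efficient corners around each connected cluster and verify the per-tile average there, as your closing remark anticipates.
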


\begin{proof}
Consider a tiling of a flat torus by $n$ efficient pentagons and $m$ non-convex pentagons. The area of the torus is $n + m$, so by the Euler characteristic formula there are $3(n+m)/2$ vertices. By Proposition \ref{nonconvex-type2-not-surrounded}, the non-convex pentagons cannot be surrounded entirely by efficient pentagons. Therefore each non-convex pentagon shares an edge with at least one other non-convex pentagon, so there are at most eight inefficient vertices for every two non-convex pentagons. 

Two non-convex pentagons which share an edge have at most four small angles and four large angles between them. Assume there are two efficient pentagons at all four of the large angles. Then all the large angles are less than $198.16^\circ$ by Proposition \ref{minmaxangle}. So the four small angles average $71.84^\circ$, and since at least one angle must be greater than or equal to the average, at one of the small angles there are only three efficient pentagons. At the rest of the small angles there are four, so the total number of efficient pentagons around two non-convex pentagons is at most $4(2) + 3(4) + 3 - 8 = 15$ (subtracting eight because we assume the tiling is edge-to-edge). 

Suppose there were not two efficient pentagons at all the large angles. Then at least two of the large angles are greater than $198.16^\circ$ and there is only one efficient pentagon at these two large angles. There are two efficient pentagons at the other two large angles, and four at the four small angles for a total of $14$. We conclude that there are at most $15$ efficient pentagons around each pair of non-convex pentagons.

Let $k_3$ and $k_4$ be the number of degree three and degree four efficient vertices. Then counting each efficient vertex as one-fifth of a pentagon, we know
$$
(3/5)k_3 + (4/5)k_4 \geq n - (15/5)(m/2).
$$
We subtract $(15/5)(m/2)$ as this is the maximum number of vertices which can tile non-efficient vertices. Additionally
$$
3n/2 + 3m/2 \geq k_3 + k_4,
$$
as there cannot be more efficient vertices then the total number of vertices in the tilng. Thus $k_4 \geq n/2 - 12m.$ Therefore $k_4 \geq n/2 - 12m > 0$ when $n > 24m$.
\end{proof}

We adapt Proposition \ref{ratio1} to the case of dihedral tilings with efficient  and Type 2 non-convex pentagons.

\begin{proposition}
\label{type2-ratio}
In a unit-area dihedral tiling $T$ of a flat torus by an efficient pentagon and a Type 2 non-convex pentagon, with perimeter per tile less than or equal to half the perimeter of a Prismatic pentagon, the ratio of efficient pentagons to non-convex pentagons is greater than $34.77$.
\end{proposition}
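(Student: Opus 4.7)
The plan is to follow the template of Proposition \ref{generalQuadRatio}, replacing the perimeter bounds appropriate to non-convex quadrilaterals with those for Type 2 non-convex pentagons in the dihedral setting.

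First, since the tiling has perimeter per tile at most half the Prismatic perimeter $P_1 = 2\sqrt{2+\sqrt 3}$, Proposition \ref{angles-convex-between} excludes the case where every angle of the efficient pentagon lies strictly between $\pi/2$ and $2\pi/3$, so Proposition \ref{type1-cvx-best} forces the efficient pentagon to have perimeter at least $P_0 > 3.819$. For the non-convex tile, the strengthened dihedral bound of Proposition \ref{type2NonconvexPerimeterMin} gives $P_2 \geq 4.93594$, substantially larger than the generic Type 2 bound near $4.559$ that would come from the equilateral triangle.

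Next, letting $n$ and $m$ denote the numbers of efficient and non-convex pentagons respectively, the perimeter hypothesis gives $nP_0 + mP_2 \leq (n+m)P_1$, hence
\[
\frac{n}{m} \geq \frac{P_2 - P_1}{P_1 - P_0}.
\]
A direct numerical check with the bounds above shows this initial ratio exceeds $24$. Proposition \ref{type2dihedralSomeDeg4Verts} then guarantees the tiling contains at least one degree-four efficient vertex, which means the efficient pentagon must have some angle of measure at most $\pi/2$. Consequently Proposition \ref{perimeterMinimizingDegreeFourPentagon} sharpens the lower bound on the efficient pentagon's perimeter to $P_0' \geq 3.8328$. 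Substituting $P_0'$ for $P_0$ in the same ratio inequality yields the claimed $n/m > 34.77$.

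The main obstacle I anticipate is the arithmetic tightness of the bootstrap step: the initial ratio only barely clears the threshold of $24$ required by Proposition \ref{type2dihedralSomeDeg4Verts}, so one must use the full strength of Proposition \ref{type2NonconvexPerimeterMin} — the generic equilateral bound $P_2 \geq 4.559$ would yield only $n/m > 15.5$, insufficient to guarantee a degree-four efficient vertex. A secondary point is the choice of sharpened pentagon bound: Proposition \ref{perimeterMinimizingDegreeFourPentagon} is the right tool here because at this stage we only know that the efficient pentagon tiles a degree-four vertex, not yet that all five of its angles tile; invoking the stronger bound of Proposition \ref{pminimizing5tileDeg4Vert} would require an additional appeal to Proposition \ref{four-angles-don't-tile}, which is unnecessary for the bound $34.77$.
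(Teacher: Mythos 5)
Your proposal matches the paper's proof essentially step for step: rule out angles strictly between $\pi/2$ and $2\pi/3$ via Proposition \ref{angles-convex-between}, take $P_0 > 3.819$ from Proposition \ref{type1-cvx-best} and $P_2 \geq 4.93594$ from Proposition \ref{type2NonconvexPerimeterMin}, derive the initial ratio of about $24.1$, invoke Proposition \ref{type2dihedralSomeDeg4Verts} to obtain a degree-four efficient vertex, and then bootstrap with Proposition \ref{perimeterMinimizingDegreeFourPentagon} to replace $P_0$ by $3.8328$ and conclude $n/m > 34.77$. Your remarks on the tightness of the threshold $24$ and on why Proposition \ref{perimeterMinimizingDegreeFourPentagon} (rather than Proposition \ref{pminimizing5tileDeg4Vert}) is the right sharpening are accurate observations about the same argument the paper gives.
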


\begin{proof}
We adapt a proof given by Chung et al. \cite[Prop. 2.11]{tile11} and Proposition \ref{ratio1}. By Proposition \ref{angles-convex-between}, $T$ cannot have an efficient pentagon with angles strictly between $\pi/2$ and $2\pi/3$. 
By Proposition \ref{type1-cvx-best}, the least-perimeter unit-area pentagon with angles not strictly between $\pi/2$ and $2\pi/3$ is defined as a Type 1 special convex pentagon. By Definition \ref{type1-cvx-best} this has perimeter, $P_0$, greater than $3.819$. The perimeter of a Cairo/Prismatic pentagon is $P_1 = 2\sqrt{2+\sqrt{3}} < 3.86$. The convex pentagons have perimeter at least $P_0$ by definition, and the non-convex pentagons have perimeter greater than $P_2 = 4.93594$ by Proposition \ref{type2NonconvexPerimeterMin}. 

Let $n$ and $m$ denote the number of efficient pentagons and non-convex pentagons. By hypothesis, 
$$ nP_0 + mP_2 < (n+m)P_1. $$
Therefore
$$ n/m > \frac{P_2-P_1}{P_1-P_0} \approx 24.117.$$

Proposition \ref{type2dihedralSomeDeg4Verts} implies that the tiling must contain at least one degree four vertex. But then by Proposition \ref{perimeterMinimizingDegreeFourPentagon}, the efficient pentagon cannot have perimeter exceeding $3.8328$. Substituting $3.8328$ in for $P_0$ yields $n/m > 34.77$. 
\end{proof}

\begin{proposition}
\label{type2-four-angles-don't-tile}
Consider a unit-area dihedral tiling of a flat torus by an efficient pentagon and a Type 2 non-convex pentagons. Assume that each efficient pentagon has at most four angles which tile with efficient pentagon's angles. Then the tiling has more perimeter per tile than half the perimeter of a Prismatic pentagon.
\end{proposition}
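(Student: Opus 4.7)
The plan is to observe that this proposition is a direct corollary of Proposition \ref{four-angles-don't-tile}. A dihedral tiling by one efficient pentagon and one Type 2 non-convex pentagon is a special case of the tilings covered by that proposition --- one in which there happens to be only a single species of efficient tile --- and the hypothesis here (``at most four angles tile with the efficient pentagon's angles'') is exactly the corresponding specialization of the general hypothesis (``at most four angles tile with efficient pentagons''). So the conclusion follows from Proposition \ref{four-angles-don't-tile} without additional argument.

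Alternatively, one can give a short self-contained proof that mirrors the structure of Proposition \ref{gen-quad-four-angles-don't-tile} in Section 3, combining a ratio lower bound with a ratio upper bound. First, by Proposition \ref{type2-ratio}, if the tiling has perimeter per tile at most half the Prismatic perimeter, then the ratio of efficient to Type 2 non-convex pentagons exceeds $34.77$. Second, the surrounding-angle count from the proof of Proposition \ref{four-angles-don't-tile} --- three angles of the non-convex pentagon less than $\pi$, each touching at most four efficient pentagons by Proposition \ref{minmaxangle}, together with two angles greater than $\pi$, each touching at most two efficient pentagons, minus five for the double counting forced by the edge-to-edge hypothesis at the two shared vertices --- caps the same ratio at $4(3) + 2(2) - 5 = 11$. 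Since $11 < 34.77$, this is an immediate contradiction.

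The hard part, to the extent there is one, is entirely absorbed into the preceding propositions: the angle bounds of Proposition \ref{minmaxangle} that limit how many efficient pentagons fit at each angle, and the perimeter accounting of Propositions \ref{ratio1} and \ref{type2-ratio} that forces the ratio to be large. The present proposition is just the assembly. Note that the dihedral bound of $34.77$ from Proposition \ref{type2-ratio} is stronger than strictly required --- Proposition \ref{ratio1} already furnishes a lower bound of $13.4 > 11$ in this setting --- so no sharpening of the count of $11$ is needed, and there is correspondingly no real difficulty to overcome.
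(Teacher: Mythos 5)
Your proposal is correct and matches the paper: the paper's own proof is exactly your second argument, bounding the number of efficient pentagons around a Type~2 non-convex pentagon by $4(3)+2(2)-5=11$ via Proposition \ref{minmaxangle} and contradicting the ratio bound of Proposition \ref{type2-ratio}. Your observation that the statement is also an immediate special case of Proposition \ref{four-angles-don't-tile} (which already yields the contradiction with the weaker bound $13.4>11$ from Proposition \ref{ratio1}) is likewise valid.
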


\begin{proof}
Because the efficient pentagon has at most four angles which tile, it cannot be surrounded entirely by efficient pentagons. Therefore the ratio of efficient pentagons to non-convex pentagons is at most the maximum number of efficient pentagons which can surround a non-convex pentagon.

At the three angles of the non-convex pentagon which are less than $\pi$, there are at most four efficient pentagons. If there were five or more, then the angles of the efficient pentagon would be too small, in violation of Proposition \ref{minmaxangle}. By the same logic, there are at most two efficient pentagons at the two angles in the non-convex pentagon which is greater than $\pi$. Since the tiling is edge-to-edge, five of the efficient pentagons surrounding the non-convex pentagon appear at two vertices, and we must avoid double counting these. So the total number of efficient pentagons surrounding the non-convex pentagon is $4(3) + 2(2) - 5 = 11$. Thus the ratio of efficient pentagons to non-convex pentagons is at most eleven to one. Therefore by Proposition \ref{type2-ratio} the tiling has more perimeter per tile than half that of a Prismatic pentagon.
\end{proof}

\begin{theorem}
\label{dihedraltype2n&mTorus}
A unit-area dihedral tiling of a flat torus by an efficient pentagon and a Type 2 non-convex pentagon cannot have perimeter per tile less than half the perimeter per tile of a Cairo/Prismatic tiling.
\end{theorem}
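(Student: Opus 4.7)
The plan is to mimic the strategy of Theorem \ref{oneeffn&mTorus}, using Type 2 analogues throughout: Lemma \ref{ncvx-pent-max-edge} and Proposition \ref{nonconvex-type2-not-surrounded} in place of the quadrilateral versions, Proposition \ref{type2-ratio} for the ratio $n/m > 34.77$, Proposition \ref{type2dihedralSomeDeg4Verts} for existence of a degree-four efficient vertex, and Proposition \ref{type2-four-angles-don't-tile} to force the efficient pentagon to have five tiling angles. I would begin by supposing such a tiling exists with $n$ efficient pentagons and $m$ Type 2 non-convex pentagons; since Theorem \ref{chungthm} forbids $m = 0$, and Proposition \ref{type2-ratio} gives $n > 34.77m$, Proposition \ref{type2dihedralSomeDeg4Verts} produces at least one degree-four efficient vertex. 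Apply Euler's formula to the torus ($n+m$ tiles, $3(n+m)/2$ vertices), and use the pairing of non-convex pentagons along their long edges to bound the number of inefficient vertices by $4m$. Together with Corollary \ref{convex-vertices-degnot-2}, this gives linear inequalities $k_3 + k_4 \geq 3n/2 - 5m/2$ and $\tfrac{3}{5}k_3 + \tfrac{4}{5}k_4 \leq n$, from which one derives explicit upper and lower bounds on $k_3$ and $k_4$ in terms of $n$ and $m$.

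Next, the degree-four efficient vertex forces an angle $s$ in the efficient pentagon with $80.92^\circ < s \leq 90^\circ$. Exactly as in the proof of Theorem \ref{oneeffn&mTorus}, I would rule out the case where $s$ is the only angle tiling a degree-four vertex: that would force $s = 90^\circ$ and, combined with the refined vertex-count around pairs of non-convex pentagons from Proposition \ref{type2dihedralSomeDeg4Verts}, demand more than $n$ copies of the $s$-angle, contradicting $n > 34.77m$. Then I would show that a second efficient angle $a$ tiling the degree-four vertex must satisfy $3s + a = 360^\circ$: the combinations $a + b + c + s = 360^\circ$, $2s + a + b = 360^\circ$, and $2a + s + b = 360^\circ$ are knocked out by Lemma \ref{cot-perimeter-lemma} and Proposition \ref{efficient-pentagon-one-small-angle}, and the symmetric case $s + 3a = 360^\circ$ forces the average of $s$ and $a$ to be exactly $\pi/2$, which by Proposition \ref{efficient-pentagon-one-small-angle} yields Cairo/Prismatic and hence a contradiction.

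Once $3s + a = 360^\circ$ is forced, I would invoke Proposition \ref{type2-four-angles-don't-tile} to conclude that all five angles of the efficient pentagon tile, and then Proposition \ref{perimeterMinimizingDegreeFourPentagon} (combined with the tight perimeter regime coming from $n > 34.77m$) to ensure that the efficient pentagon's perimeter is below the $3.8574$ cutoff needed for Proposition \ref{quadCan'tTileDeg3}. That proposition then forbids $a$ from tiling any degree-three efficient vertex, so every occurrence of $a$ in the tiling lies either at a degree-four efficient vertex (at most one $a$ per such vertex, since $3s + a = 360^\circ$ leaves room for only one $a$) or at an inefficient vertex (at most three $a$'s per inefficient vertex, because $a > 90^\circ$). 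Counting all $n$ copies of $a$ gives $n \leq k_4 + 3(4m) \leq (n/2 + Cm) + 12m$ for an explicit $C$ obtained from the earlier bounds, i.e.\ $n \leq C'm$ for some absolute constant $C'$ well below $34.77$.

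The main obstacle is the last step: the bookkeeping that turns the $a$-count into a numerical contradiction with Proposition \ref{type2-ratio}. Because a Type 2 non-convex pentagon has two reflex angles rather than the one reflex angle of a non-convex quadrilateral, both the cap of $4m$ inefficient vertices and the bound on the number of efficient $a$-angles that can accumulate at such vertices are looser than in the quadrilateral case. If the naive constants from the Euler-plus-angle-sum bookkeeping are not tight enough to beat $34.77$, the remedy is to re-run the refined case analysis from Proposition \ref{type2dihedralSomeDeg4Verts}: distinguish whether all four large angles of a paired non-convex duo admit two surrounding efficient pentagons (forcing those large angles below $198.16^\circ$ and hence the small angles' average below $71.84^\circ$) or not, and use this dichotomy both to sharpen the inefficient-vertex bound and to cap the number of $a$-angles contributed at each inefficient vertex; this should cleanly bring the constant below $34.77$ and close the argument.
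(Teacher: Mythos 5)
Your plan follows the paper's proof almost step for step, and your self-diagnosed ``main obstacle'' is real: the naive count $n \leq k_4 + 3(4m) \leq n/2 + (15/2)m + 12m$ gives only $n \leq 39m$, which does not beat $34.77$. The paper closes this exactly the way you anticipate, by exploiting the two reflex angles of a Type 2 pentagon: at least two-fifths of the inefficient vertices contain an angle greater than $\pi$, and at such a vertex there is room for at most one copy of $a$ (since $a > 90^\circ$), so the $a$-count at inefficient vertices drops from $3(4m) = 12m$ to $(2/5)(4m) + 3(3/5)(4m) = 8.8m$, yielding $n \leq 32.6m < 34.77m$. So that part of your proposal is sound in outline.

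The genuine gap is in how you reach the hypothesis of Proposition \ref{quadCan'tTileDeg3}, which requires the efficient pentagon's perimeter to be \emph{below} $3.8574$. You propose to get this from Proposition \ref{perimeterMinimizingDegreeFourPentagon} together with ``the tight perimeter regime coming from $n > 34.77m$,'' but both of those push in the wrong direction: Proposition \ref{perimeterMinimizingDegreeFourPentagon} is a \emph{lower} bound ($\geq 3.8328$), and the ratio inequality $n/m > (P_2 - P_1)/(P_1 - P_0)$ only says that a larger perimeter $P_0$ forces a larger ratio $n/m$ --- a lower bound on $n/m$ gives no cap on $P_0$ at all (an efficient pentagon can a priori have perimeter up to Cairo's $\approx 3.8637$). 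To cap $P_0$ you first need an \emph{upper} bound on $n/m$, and the paper manufactures one from the step you skipped after establishing $3s + a = 360^\circ$: since the tiling is dihedral there are exactly $n$ copies of $s$, each degree-four efficient vertex consumes three of them, and a refined count of efficient pentagons around paired non-convex pentagons gives $k_4 \geq n/2 - (25/2)m$, hence $n \geq 3k_4$ forces $n \leq 75m$. Feeding $n/m \leq 75$ back into the ratio inequality then yields $P_0 < 3.8495 < 3.8574$, which is what licenses Proposition \ref{quadCan'tTileDeg3} and hence the claim that $a$ never tiles a degree-three efficient vertex. Without this detour your final $a$-count has no justification for excluding degree-three efficient vertices, and the argument does not close.
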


\begin{proof}
Assume there exists a unit-area tiling of a flat torus by $n$ efficient pentagons and $m$ Type 2 non-convex pentagons with perimeter per tile less than a Cairo/Prismatic tiling's. Note that $m \not= 0$; otherwise the tiling would contradict Theorem \ref{chungthm}. The area of the torus is $n + m$, so by the Euler characteristic formula there are $3(n + m)/2$ vertices. By Proposition \ref{nonconvex-type2-not-surrounded}, the non-convex pentagon cannot be surrounded entirely by efficient pentagons because at least one edge in the non-convex pentagon is too long. Therefore each non-convex pentagon shares an edge with at least one other non-convex pentagon, so there are at most eight inefficient vertices for each pair of non-convex pentagons. Therefore there are at most $4m$ inefficient vertices, so the number of efficient vertices is at least 
$$
\frac{3n}{2} + \frac{3m}{2} - 4m = \frac{3n}{2} - \frac{5m}{2}
$$
Let $k_3$ and $k_4$ be the number of degree three and degree four efficient vertices. By Corollary \ref{convex-vertices-degnot-2}, these are the only two types of efficient vertices which can appear in the tiling. Thus
$$
k_3 + k_4 \geq \frac{3n}{2} - \frac{5m}{2}.
$$
Additionally, since there are $n$ efficient pentagons, considering each vertex as a fifth of a pentagon we conclude that
$$
\frac{3}{5} k_3 + \frac{4}{5} k_4 \leq n.
$$
Therefore that $k_3 \geq n - 10m$ and $k_4 \leq n/2 + (15/2)m$.

Now by Proposition \ref{type2-ratio} it must be the case that $n > 34.77m$, as the tiling has perimeter per tile less than a Cairo pentagon's. By Proposition \ref{type2dihedralSomeDeg4Verts}, there exists at least one efficient vertex of degree four in the tiling. So there must be at least one angle, $s$, in the efficient pentagon which measures less than or equal to $90^\circ$. 

Suppose $s$ is the only angle that tiles an efficient vertex of degree four. Then $s = 90^\circ$. Therefore at the large angles of the non-convex pentagon there can be at most one efficient angle, and at small angles there can be at most three. As the non-convex pentagons are paired because of their edge-lengths, there are at most $4(1) + 4(3) - 8 = 8$ efficient pentagons (subtracting eight because we assume the tiling is edge-to-edge) per pair of non-convex pentagons. Therefore counting each efficient pentagon as one-fifth of a vertex, we know
$$
(3/5)k_3 + (4/5)k_4 \geq n - (8/5)(m/2).
$$
Additionally
$$
3n/2 + 3m/2 \geq k_3 + k_4,
$$
as there cannot be more efficient vertices than total vertices in the tiling. We can use these two inequalities to determine that $k_4 \geq n/2 - (17/2)m.$ 

We will show this implies there are too many $s$ angles in the tiling. As $s$ is the only angle that can tile a degree-four efficient vertex, there are at least $4k_4 \geq 2n - 34m$ $s$ angles. But as there cannot be more than $n$ $s$ angles, we have that $n \geq 4k_4$, and so $n \geq 2n-34m$. Thus $34m \geq n$. But since $n > 34.77m$ by Proposition \ref{type2-ratio} this inequality is false, i.e. there are more than $n$ $s$ angles. As this is a contradiction, there must be at least two angles which tile a degree four vertex.

Next we show there cannot be more than one angle other than $s$ which tiles a degree four efficient vertex. Let $a$, $b$, and $c$ be three angles in the efficient pentagon different than $s$. If $a + b + c + s = 360^\circ$ then the four angles average $90^\circ$ and the pentagon is not efficient by Proposition \ref{efficient-pentagon-one-small-angle}. If $2s + a + b = 360^\circ$, then by Lemma \ref{cot-perimeter-lemma} the perimeter is minimized when two angles measure $(360 - 2s)/2$, two measure $(180 + s)/2$, and one measures $s$. By definition $s \leq 90^\circ$ and by Proposition \ref{minmaxangle} $s > 80.92^\circ$; it follows from Lemma \ref{cot-perimeter-lemma} that the pentagon is not efficient. The case when $2a + s + b = 360^\circ$ is similar: just replace $s$ with $a$ and let $a$ range from $80.92^\circ$ to $142.29^\circ$ by Proposition \ref{minmaxangle}. As before, the pentagon is not efficient in this case. Therefore only one angle, say $a$, can tile a degree four vertex with $s$.

If there are two $s$ and two $a$ angles, then they average exactly $\pi/2$. Since the pentagon is efficient, by Lemma \ref{cot-perimeter-lemma} and Proposition \ref{efficient-pentagon-one-small-angle} it must be a Cairo or Prismatic pentagon, and the proposition follows as $m \not= 0$. 

Therefore there are either three $s$ or three $a$ angles at a degree four efficient vertex. First suppose there are three $a$ angles and one $s$ angle. Then $3a + s = 360$. By Proposition \ref{minmaxangle}, $s$ must be greater than $80.92^\circ$, and by hypothesis less than or equal to $90^\circ$. It follows that the average of $a$ and $s$ is between $86.973^\circ$ and $90^\circ$. Then by Proposition \ref{efficient-pentagon-one-small-angle}, the efficient pentagon must be Cairo or Prismatic and the proposition follows as $m \not= 0$. Therefore it must be the case that $3s + a = 360$. 

Because the tiling is dihedral, there are exactly $n$ $s$ angles. At all $k_4$ vertices there will be three $s$ angles. By Proposition \ref{minmaxangle}, there are at most $2(4) + 4(4) - 8 = 16$ efficient pentagons surrounding each pair of non-convex pentagons. Therefore counting each efficient vertex as one-fifth of a pentagon, we know
$$
(3/5)k_3 + (4/5)k_4 \geq n - (16/5)(m/2).
$$
Additionally
$$
3n/2 + 3m/2 \geq k_3 + k_4
$$
which we can use to determine that $k_4 \geq n/2 - (25/2)m.$

Then since there are three $s$ angles at each $k_4$ vertex, there are at least $3k_4 \geq 3n/2 - (75/2)m$ $s$ angles. But as there cannot be more than $n$ $s$ angles, we have that $n \geq 3k_4$, and so $n \geq 3n/2 - (75/2)m$ which means $n \leq 75m$.

In order to have a ratio of efficient pentagons to non-convex pentagons less than or equal to 75, the efficient pentagon must have perimeter less than $3.8495$. Here it is convenient to note that since the tiling has perimeter per tile less than Cairo/Prismatic, by Proposition \ref{type2-four-angles-don't-tile} the efficient pentagon must have five angles which tile. Then the efficient pentagon satisfies the hypothesis of Proposition \ref{quadCan'tTileDeg3}, so it will not be the case that $a$ tiles an efficient vertex of degree three. Therefore $a$ appears only at degree four efficient vertices and inefficient vertices.

There will be at most one $a$ angle at each degree four vertex. Because at least two-fifths of the inefficient angles have a large angle at them, there is at most one $a$ angle at at least two-fifths of the non-efficient vertices, and at most three $a$ angles at at most three-fifths of the non-efficient vertices, as $a$ is greater than $90^\circ$. Since $k_4 \leq n/2 + (15/2)m$ and the number of inefficient vertices is at most $4m$, the number of $a$ angles in the tiling is at most $n/2 + (15/2)m + (2/5)(4m) + (3)(3/5)(4m)$, and therefore 
$$
n/2 + (163/10)m \geq n. 
$$
Solving this yields $32.6m \geq n$; if this is not the case there will not be enough $a$ angles in the tiling. But this contradicts Proposition \ref{type2-ratio}: that the ratio of efficient pentagons to Type 1 non-convex pentagons must be at least $34.77$. Therefore it must be the case that the perimeter per tile is greater than or equal to that of a Cairo/Prismatic pentagon.
\end{proof}

\begin{remark}
\emph{We have to slightly improve two important values in this section to get the proof to apply to tilings by an efficient pentagon and any number of non-convex Type 2 pentagons to work. We need to get the ratio of efficient to non-convex Type 2 pentagons which implies there exists at least one degree four vertex down to 22.57 from 24, and we need to get the upper-bound for the ratio of efficient to non-convex Type 2 pentagons in a tiling which is better than Cairo/Prismatic tilings down to 31.26.}
\end{remark}

\begin{remark}
\emph{Considering trihedral tilings is a logical in generalizing the results from this section. While we have solved the case of one efficient pentagon two non-convex Type 2 pentagons in Theorem \ref{dihedraltype2n&mTorus}, there is still the case of two different efficient pentagons and a Type 2 non-convex pentagon, and the case of an efficient pentagon, a non-efficient convex pentagon, and a Type 2 non-convex pentagon, both of which remain open.}
\end{remark}

%%%%%%%%%%%%%%%%%%%%%%%
%%%% SECTION 4 %%%%%%%%
%%%%%%%%%%%%%%%%%%%%%%%

\section{Extension of Results to the Plane}

This section contains early attempts and work to extend the above results, many of which hold only for large flat tori, to the plane. We first generalize the concept of the perimeter, as considered on a flat torus, to the plane.

\begin{definition}
\label{def:perimeterRatio}
\emph{The} perimeter ratio, $\rho$, \emph{of a planar tiling is the limit supremum as $R$ goes to infinity of the perimeter inside a disk of radius $R$ about the origin , divided by $\pi R^2$. We intend to prove that this does not depend on the choice of origin.}
\end{definition}

The following result allows us to generalize results from finite cases on large tori to infinite cases in the plane.

\begin{lemma}[Truncation Lemma] \cite[15.3]{morggeo}
\label{truncation-lemma}
Consider a tiling of the plane. Let $P(R)$ and $A(R)$ be the perimeter and area of the tiling in a disk of radius $R$ centered at the origin. Additionally let $P_0(R)$ and $A_0(R)$ be the perimeter and area of tiles completely contained within the disk. Then given $\epsilon > 0$, $R_0 > 0$, there exists some $R \geq R_0$ such that the number $N(R)$ of points where the circle $S(0, R)$ intersects edges of tiles is less than $\epsilon A(R)$. In particular, the number of tiles which intersect the disk but are not contained in the disk is at most $\epsilon A(R)$. Furthermore 
$$
\liminf_{R \to \infty} \frac{P_0}{A_0} \leq \rho.
$$
\end{lemma}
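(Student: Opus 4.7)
My plan is a Fubini/coarea averaging argument. The essential ingredient is the inequality
$$\int_0^{R_1} N(R)\, dR \leq P(R_1)$$
for any $R_1 > 0$, which I would derive by parameterizing each straight edge $e$ of the tiling as $\gamma_e : [0, \ell_e] \to \R^2$, observing that
$$\int_0^{R_1} \#\{t : |\gamma_e(t)| = R\}\, dR = \int_0^{\ell_e} \bigl|\tfrac{d}{dt}|\gamma_e(t)|\bigr|\, \mathbf{1}_{|\gamma_e(t)| \leq R_1}\, dt$$
is at most the length of $e \cap B(0, R_1)$, and then summing over all edges.

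Given $\epsilon, R_0 > 0$, I would fix $\delta > 0$ and use the limsup definition of $\rho$ to pick $R_0' \geq \max(R_0,\, 8\pi(\rho+\delta)/\epsilon)$ so that $P(r) \leq (\rho+\delta)\pi r^2$ for all $r \geq R_0'$. Averaging the coarea bound over $[R_0', 2R_0']$ yields, by the mean value theorem for integrals, some $R^* \in [R_0', 2R_0']$ with
$$N(R^*) \leq \frac{P(2R_0')}{R_0'} \leq 4\pi(\rho+\delta)R_0' < \epsilon\pi(R^*)^2 = \epsilon A(R^*),$$
which is the first claim. The second claim follows because the boundary of each partially contained tile is a closed curve crossing $S(0, R^*)$ an even number of times, at least two, so the number of such tiles is at most $N(R^*)/2 < \epsilon A(R^*)$.

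For the liminf statement, since the tiles are unit-area the total area of the partially contained tiles is at most their number, so $A_0(R^*) \geq A(R^*) - \epsilon A(R^*) = (1-\epsilon)A(R^*)$. Every edge of a fully contained tile lies in $B(0, R^*)$, so $P_0(R^*) \leq P(R^*) \leq (\rho+\delta)A(R^*)$, giving
$$\frac{P_0(R^*)}{A_0(R^*)} \leq \frac{\rho+\delta}{1-\epsilon}.$$
Since we can take $R_0$, and therefore $R^*$, arbitrarily large, this shows $\liminf_{R\to\infty} P_0(R)/A_0(R) \leq (\rho+\delta)/(1-\epsilon)$; sending $\epsilon, \delta \to 0$ finishes.

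The main obstacle is bookkeeping rather than a hard estimate: one must dispose of the measure-zero set of radii at which $S(0,R)$ is tangent to some edge or passes through a vertex, and one must confirm the convention that $P_0(R)$ denotes the tiling's total edge length restricted to fully contained tiles (each edge counted once). The alternative interpretation as a sum of individual tile perimeters would double-count shared edges and force $P_0/A_0 \approx 2\rho$ in a uniform tiling, contradicting the stated conclusion; so the argument tacitly fixes the counted-once convention.
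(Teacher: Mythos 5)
The paper does not actually prove this lemma: it is quoted from Morgan's \emph{Geometric Measure Theory} (the citation [15.3]), and the argument there is precisely the coarea--averaging argument you give, so your route is the standard one rather than a genuinely different one. Your proof is essentially correct. The Banach-indicatrix identity $\int_0^{R_1}\#\{t:|\gamma_e(t)|=R\}\,dR=\int_0^{\ell_e}\bigl|\tfrac{d}{dt}|\gamma_e(t)|\bigr|\mathbf{1}_{|\gamma_e(t)|\le R_1}\,dt\le \mathrm{length}(e\cap B(0,R_1))$, summed over edges, gives $\int_0^{R_1}N(R)\,dR\le P(R_1)$, and averaging over $[R_0',2R_0']$ with $R_0'$ of order $(\rho+\delta)/\epsilon$ produces the good radius; the ``mean value'' step needs only that a nonnegative measurable function cannot exceed its average everywhere, and the tangency/vertex radii form a null set, so a good generic radius survives.

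Two small repairs are worth recording. First, the count of partially contained tiles: each of the $N(R)$ intersection points lies on a single edge, and that edge bounds \emph{two} tiles, so $\sum_T\#\bigl(\partial T\cap S(0,R)\bigr)=2N(R)$; since each partially contained tile contributes at least $2$ to this sum, the correct bound is $N(R)$, not $N(R)/2$. This is harmless, because the lemma only asserts the count is at most $\epsilon A(R)$ and you already have $N(R)<\epsilon A(R)$. Second, your argument tacitly assumes $\rho<\infty$ when you bound $P(2R_0')$ by $(\rho+\delta)\pi(2R_0')^2$; the lemma as stated carries no such hypothesis, but every application in the paper is to tilings with $\rho$ at most half the Prismatic perimeter, so nothing is lost in context. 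Finally, your reading of the convention for $P_0$ (total edge length of the fully contained tiles, each edge counted once) is the right one: it is exactly what produces the factor $2$ on the right-hand side of $C_1P_1+C_2P_2+N_1P_3+N_2P_4\le 2P_0/A_0$ in Propositions \ref{plane-ext-ratio1} and \ref{plane-ext-stronger-ratio1}, and it is what makes the conclusion $\liminf P_0/A_0\le\rho$ rather than $2\rho$.
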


The following proposition generalizes Proposition \ref{ratio1} to the plane:

\begin{proposition}
\label{plane-ext-ratio1}
Let $T$ be a tiling of the plane by unit-area pentagons, with perimeter ratio less than or equal to half the perimeter of a Prismatic pentagon. Then the fractions $C_1$, $N_1$ and $N_2$ of the pentagons completely inside a disk about the origin of radius $R$ which are efficient, non-convex Type 1, and non-convex Type 2, respectively, satisfy:   
$$
\liminf_{R \to \infty} (C_1 - 2.6N_1 + 13.4N_2) \geq 0.
$$ 
\end{proposition}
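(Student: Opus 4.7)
The plan is to transpose the proof of Proposition \ref{ratio1} to the planar setting by using Lemma \ref{truncation-lemma} (the Truncation Lemma) to replace ``torus'' bookkeeping with control of boundary edges in a large disk. The key observation is that the inequality we want is weaker than the torus analog $C_1 \geq 2.6\,N_1 + 13.4\,N_2$, because $N_2 \geq 0$ flips the sign of the $N_2$ term in our favor, so it suffices to prove an approximate planar version of the stronger torus inequality along the radii produced by the Truncation Lemma.

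First I would fix $\epsilon > 0$ and apply Lemma \ref{truncation-lemma} to extract arbitrarily large radii $R$ along which the number of tiles intersecting $D(0,R)$ but not contained in it is at most $\epsilon A(R)$ and $P_0(R)/A_0(R) \leq \rho + \epsilon \leq P_1/2 + \epsilon$, where $P_1 = 2\sqrt{2+\sqrt{3}}$. Let $\mathcal{F}(R)$ be the family of pentagons fully contained in $D(0,R)$, with counts $c_1, c_2, n_1, n_2$ of efficient, non-efficient convex, Type 1 non-convex, and Type 2 non-convex tiles; then $C_1, N_1, N_2$ and $C_2$ are the corresponding fractions of $A_0(R) = c_1+c_2+n_1+n_2$ and sum to $1$.

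Next I would apply the same per-tile perimeter lower bounds used in Proposition \ref{ratio1}: convex pentagons have perimeter at least $P_0 = 2\sqrt{5}\sqrt[4]{5 - 2\sqrt{5}}$ (regular pentagon), non-efficient convex ones have perimeter at least $P_1$, Type 1 non-convex at least $P_2 = 4$, and Type 2 non-convex at least $P_3 = 3\sqrt{4/\sqrt{3}}$ (Proposition \ref{3typesnonconvex}). Each edge of a tile in $\mathcal{F}(R)$ is shared with exactly one other tile in the full tiling; interior edges of $\bigcup\mathcal{F}(R)$ are double-counted and boundary edges single-counted in $\sum_{T\in\mathcal{F}(R)}\mathrm{perim}(T)$, so
\[
c_1 P_0 + c_2 P_1 + n_1 P_2 + n_2 P_3 \;\leq\; \sum_{T\in\mathcal{F}(R)} \mathrm{perim}(T) \;\leq\; 2P_0(R).
\]
Dividing by $A_0(R)$, using $P_0(R)/A_0(R) \leq P_1/2 + \epsilon$, and substituting $C_2 = 1 - C_1 - N_1 - N_2$ exactly reproduces the algebra of Proposition \ref{ratio1} with an $O(\epsilon)$ slack, yielding
\[
C_1 \;\geq\; \frac{P_2-P_1}{P_1-P_0}\,N_1 + \frac{P_3-P_1}{P_1-P_0}\,N_2 - \frac{2\epsilon}{P_1-P_0} \;>\; 2.6\,N_1 + 13.4\,N_2 - O(\epsilon).
\]
Because $N_2 \geq 0$ we have $-2.6\,N_1 + 13.4\,N_2 \geq -2.6\,N_1 - 13.4\,N_2$, so
\[
C_1 - 2.6\,N_1 + 13.4\,N_2 \;\geq\; C_1 - 2.6\,N_1 - 13.4\,N_2 \;\geq\; -O(\epsilon)
\]
along the Truncation radii. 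Letting $\epsilon \downarrow 0$ and invoking the Truncation Lemma's conclusion on the full sequence of radii gives $\liminf_{R\to\infty}(C_1 - 2.6\,N_1 + 13.4\,N_2) \geq 0$.

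The main obstacle is confirming that the boundary contribution to $\sum_{T\in\mathcal{F}(R)}\mathrm{perim}(T)$ is truly absorbed into the $O(\epsilon)$ slack: it amounts to checking that the edges shared between contained and partial tiles contribute $o(A_0(R))$ along the Truncation sequence, which follows from the Truncation Lemma's bound on the number of partial tiles together with the uniform perimeter control coming from $\rho \leq P_1/2$. Everything else is algebraic repackaging of Proposition \ref{ratio1} together with the observation that the sign of $N_2$ in the claimed inequality makes it strictly weaker than the natural torus-style bound we actually prove.
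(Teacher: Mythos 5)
Your proof is correct and takes essentially the same route as the paper's: extract good radii from the Truncation Lemma, apply the per-tile perimeter lower bounds with the factor-of-two edge count, substitute $C_2 = 1 - C_1 - N_1 - N_2$, and rerun the algebra of Proposition \ref{ratio1} with an $O(\epsilon)$ error before sending $\epsilon \to 0$. The only thing you make explicit that the paper leaves implicit is that the stated inequality (with $+13.4N_2$) is weaker than the bound $C_1 - 2.6N_1 - 13.4N_2 > -40\epsilon$ that the argument actually produces, which is the right way to read what appears to be a sign typo in the statement.
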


\begin{proof}
The perimeters of a regular pentagon, a Cairo/Prismatic pentagon, the unit square, and the unit-area equilateral triangle are $P_1 = 2\sqrt{5} \sqrt[4]{5 - 2\sqrt{5}}$, $P_2 = 2\sqrt{2 + \sqrt{3}}$, $P_3 = 4$ and $P_4 = 3 \sqrt{4/\sqrt{3}}$. 

Let $\epsilon > 0$. By the definition of $\rho$, there exists an $R_0 > 0$ such that for $R_1 > R_0$ the perimeter to area ratio of the disk of radius $R_1$ is less than $\rho + \epsilon$. By the Truncation Lemma \ref{truncation-lemma} there exists an $R > R_0$ such that the perimeter $P_0(R)$ and area $A_0(R)$ of the tiles completely within the disk of radius $R$ satisfy
$$
\frac{P_0}{A_0} < \rho + \epsilon.
$$
Then since the efficient, non-efficient convex, Type 1 non-convex, and Type 2 non-convex pentagons have perimeter at least $P_1, P_2, P_3$ and $P_4$ respectively,  
$$
C_1 P_1 + C_2 P_2 + N_1 P_3 + N_2 P_4 \leq 2\frac{P_0}{A_0}.
$$
Therefore $C_1 P_1 + C_2 P_2 + N_1 P_3 + N_2 P_4 \leq 2(\rho + \epsilon)$, which by hypothesis implies
$$
C_1 P_1 + C_2 P_2 + N_1 P_3 + N_2 P_4 < P_2 + 2\epsilon. 
$$
Then,
$$
C_1 P_1 + C_2 P_2 + N_1 P_3 + N_2 P_4 < P_2(C_1 + C_2 + N_1 + N_2) + 2\epsilon, 
$$
and
$$
C_1 > N_1 \frac{P_3 - P_2}{P_2 - P_1} + N_2 \frac{P_4 - P_2}{P_2 - P_1} - \frac{2\epsilon}{P_2 - P_1}.
$$
By the definition of the $P_i$,
$$
C_1 - 2.6N_1 - 13.4N_2 > -40\epsilon.
$$
By the definition of the limit inferior the proposition holds.
\end{proof}

\begin{proposition}
\label{plane-ext-stronger-ratio1}
Let $T$ be a tiling of the plane with only efficient and non-convex pentagons, with perimeter ratio less than or equal to half the perimeter of a Prismatic pentagon. Then the fractions $C_1$, $N_1$, and $N_2$ of the pentagons completely inside a disk about the origin of radius R which are efficient, non-convex Type 1, and non-convex Type 2, respectively, satisfy:
$$
C_1 > 2.6N_1 + 13.4N_2.
$$ 
\end{proposition}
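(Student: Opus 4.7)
The plan is to mirror the proof of Proposition \ref{plane-ext-ratio1}, exploiting the extra hypothesis (no non-efficient convex pentagons, so $C_1+N_1+N_2=1$ exactly) together with the fact that the numerical constants $2.6$ and $13.4$ are strict underestimates of the true ratios, in order to upgrade the limit-inferior conclusion to a strict inequality.

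First, fix $\epsilon>0$ and invoke the Truncation Lemma \ref{truncation-lemma}: for arbitrarily large $R$ we may assume the perimeter $P_0$ and area $A_0$ of tiles completely contained in the disk of radius $R$ satisfy $P_0/A_0<\rho+\epsilon\le P_2/2+\epsilon$, where $P_2=2\sqrt{2+\sqrt{3}}$ is the Cairo/Prismatic perimeter. Summing the perimeters of the contained tiles (each shared edge counted twice, with a boundary error absorbed into $\epsilon$ by Lemma \ref{truncation-lemma}) and bounding each tile's perimeter below by the minimum for its type, exactly as in the proof of Proposition \ref{ratio1} --- using $P_1=2\sqrt{5}\sqrt[4]{5-2\sqrt{5}}$ for efficient pentagons (by Lemma \ref{cot-perimeter-lemma}), $P_3=4$ for Type~1 non-convex, and $P_4=3\sqrt{4/\sqrt{3}}$ for Type~2 non-convex (by Proposition \ref{3typesnonconvex}) --- yields
$$C_1 P_1 + N_1 P_3 + N_2 P_4 \;\le\; P_2 + 2\epsilon.$$
Because the tiling contains only efficient and non-convex pentagons, $C_1+N_1+N_2=1$, so the right-hand side equals $(C_1+N_1+N_2)P_2+2\epsilon$, and rearrangement gives
$$C_1(P_2-P_1)\;\ge\;N_1(P_3-P_2)+N_2(P_4-P_2)-2\epsilon.$$

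The key numerical observation (the same one powering the strict inequality in Proposition \ref{ratio1}) is that
$$\frac{P_3-P_2}{P_2-P_1}\approx 2.642 > 2.6, \qquad \frac{P_4-P_2}{P_2-P_1}\approx 13.475 > 13.4.$$
Letting $\epsilon\to 0$ along the sequence of $R$ produced by the Truncation Lemma, the rearranged inequality gives $C_1\ge 2.642\,N_1 + 13.475\,N_2$ in the limit, and the positive margins $2.642-2.6>0$ and $13.475-13.4>0$ yield the claimed strict inequality $C_1>2.6\,N_1+13.4\,N_2$ whenever $N_1+N_2>0$. The degenerate case $N_1=N_2=0$ forces $C_1=1>0$ trivially.

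The principal obstacle is the passage from the $\liminf$-type estimate that the Truncation Lemma provides to a clean strict inequality for the $R$-dependent fractions $C_1,N_1,N_2$. The resolution is exactly the margin between the rounded constants $2.6,\,13.4$ appearing in the statement and the true ratios above: this margin is uniform in $R$ and positive, and for $R$ large enough it dominates the $O(\epsilon)$ truncation error, provided one either passes to a subsequence along which $N_1+N_2$ stays bounded below by a positive constant or handles the case $N_1+N_2\to 0$ directly via $C_1\to 1$.
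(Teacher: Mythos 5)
Your proposal is correct and follows essentially the same route as the paper: the Truncation Lemma, the perimeter lower bounds $P_1$, $P_3$, $P_4$ against the Cairo/Prismatic value $P_2$, the identity $C_1+N_1+N_2=1$, and the positive margin of the true ratios (about $2.63$ and $13.43$) over the rounded constants $2.6$ and $13.4$ to absorb the $O(\epsilon)$ truncation error. The only difference is bookkeeping at the last step: the paper writes the error term as $-\frac{2\epsilon}{P_2-P_1}(C_1+N_1+N_2)$ and solves for $C_1$, obtaining the strict inequality uniformly for any $\epsilon<5.39\times 10^{-5}$, which avoids your case split on whether $N_1+N_2$ is bounded away from zero.
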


\begin{proof}
The perimeters of a regular pentagon, a Cairo/Prismatic pentagon, the unit square, and the unit-area equilateral triangle are $P_1 = 2\sqrt{5} \sqrt[4]{5 - 2\sqrt{5}}$, $P_2 = 2\sqrt{2 + \sqrt{3}}$, $P_3 = 4$ and $P_4 = 3 \sqrt{4/\sqrt{3}}$. 

Let $\epsilon > 0$. By the definition of $\rho$, there exists an $R_0 > 0$ such that for $R_1 > R_0$ the perimeter to area ratio of the disk of radius $R_1$ is less than $\rho + \epsilon$. By the Truncation Lemma \ref{truncation-lemma} there exists an $R > R_0$ such that the perimeter $P_0(R)$ and area $A_0(R)$ of the tiles completely within the disk of radius $R$ satisfy
$$
\frac{P_0}{A_0} < \rho + \epsilon.
$$
Then since the efficient, Type 1 non-convex, and Type 2 non-convex pentagons have perimeter at least $P_1, P_3$ and $P_4$ respectively,  
$$
C_1 P_1 + N_1 P_3 + N_2 P_4 \leq 2\frac{P_0}{A_0}.
$$
Therefore $C_1 P_1 + N_1 P_3 + N_2 P_4 \leq 2(\rho + \epsilon)$, which by hypothesis implies
$$
C_1 P_1 + N_1 P_3 + N_2 P_4 < P_2 + 2\epsilon. 
$$
Then,
$$
C_1 P_1 + N_1 P_3 + N_2 P_4 < P_2(C_1 + N_1 + N_2) + 2\epsilon, 
$$
and
$$
C_1 > N_1 \frac{P_3 - P_2}{P_2 - P_1} + N_2 \frac{P_4 - P_2}{P_2 - P_1} - \frac{2\epsilon}{P_2 - P_1}.
$$
Then since $C_1 + N_1 + N_2 = 1$,
$$
C_1 > 2.63 N_1 + 13.43 N_2 - 38.63 \epsilon (C_1 + N_1 + N_2),
$$
and therefore
$$
C_1 > \frac{2.63 - 38.63 \epsilon}{1 + 38.63 \epsilon} N_1 + \frac{13.43 - 38.63 \epsilon}{1 + 38.63 \epsilon} N_2 
$$
Choosing any value of $\epsilon < 5.39 \times 10^{-5}$ implies
$$
C_1 > 2.6N_1 + 13.4N_2.
$$ 
\end{proof}

We extend Proposition \ref{angles-convex-between} to the plane :

\begin{proposition}
\label{plane-ext-angles-convex-between}
A unit-area tiling of the plane by non-convex pentagons and pentagons with angles strictly between $\pi/2$ and $2\pi/3$ has a perimeter ratio more than half the perimeter of a Prismatic pentagon.
\end{proposition}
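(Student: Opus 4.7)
My plan is to adapt the proof of Proposition \ref{angles-convex-between} to a large disk, using Lemma \ref{truncation-lemma} to control boundary effects. Suppose for contradiction that such a tiling has perimeter ratio $\rho \le P_1/2$, where $P_1 = 2\sqrt{2+\sqrt{3}}$. Given $\epsilon>0$, Lemma \ref{truncation-lemma} produces arbitrarily large $R$ such that the perimeter-to-area ratio of the tiles $\mathcal{T}_R$ completely contained in $D_R$ is below $\rho+\epsilon$, and at most $\epsilon A(R)$ tiles meet the boundary circle. Let $C(R)$ and $N(R)$ denote the counts of convex and non-convex pentagons in $\mathcal{T}_R$. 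Proposition \ref{plane-ext-ratio1} (absorbing the slack into an $o(1)$ term) yields $C_1(R) \ge 2.6\,N_1(R) + 13.4\,N_2(R) - o(1)$, where $C_1$ is the efficient-convex fraction; since the non-efficient convex fraction is nonnegative, this gives $C(R) \ge 2.6\,N(R) - o(1)$.

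For the opposite bound I would run the torus vertex count on interior vertices. The strict constraint $a_i \in (\pi/2,2\pi/3)$ makes any sum $\sum m_i a_i = 2\pi$ impossible with convex angles alone, because $\sum m_i$ would have to lie strictly between $3$ and $4$; hence every vertex of the tiling contains at least one non-convex angle. Let $V_I(R)$ denote the interior vertices (those whose incident tiles all lie in $\mathcal{T}_R$); the number of non-interior vertices is at most $5\epsilon A(R)$. Each non-convex pentagon contributes at least one angle greater than $\pi$, and at most one such large angle can sit at any vertex, so at least $N(R) - O(\epsilon A(R))$ interior vertices host a large angle. At these vertices at most one convex pentagon fits (the remaining angle $<\pi$ cannot hold two convex angles each exceeding $\pi/2$); at the other interior vertices at most three fit. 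Counting convex incidences, $5C(R) \le \tfrac{13}{5}V_I(R) + O(\epsilon A(R))$, and since every interior vertex carries a non-convex incidence $V_I(R) \le 5N(R)$, so $C(R) \le 2.6\,N(R) + O(\epsilon A(R))$.

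The two bounds read $2.6\,N(R) - o(1) \le C(R) \le 2.6\,N(R) + O(\epsilon A(R))$, which is tight only to leading order. The main obstacle is extracting a quantitative strict gap large enough to overwhelm the boundary error. When $N_2(R)>0$, Proposition \ref{plane-ext-ratio1} already supplies it, since then $C(R) \ge 2.6\,N(R) + 10.8\,N_2(R) - o(1)$. When $N_2(R)=0$ (only Type 1 non-convex pentagons), the strict interiority of the angle constraint must be pushed harder: for instance, at each large-angle interior vertex the non-convex angle strictly exceeds $\pi$, so after subtracting convex angles (each strictly greater than $\pi/2$) one should be able to show that a positive fraction of large-angle vertices admits zero convex pentagons, sharpening the vertex count to $C(R) \le (2.6-\delta) N(R) + O(\epsilon A(R))$ for some absolute $\delta>0$. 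With such a uniform gap in hand, taking $\epsilon$ sufficiently small yields the desired contradiction.
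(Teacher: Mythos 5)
Your overall route is the same as the paper's: truncate to a large disk via Lemma \ref{truncation-lemma}, use Proposition \ref{plane-ext-ratio1} for the lower bound on the convex-to-non-convex ratio, and reproduce the vertex count of Proposition \ref{angles-convex-between} (no all-convex vertices because $k$ angles from $(\pi/2,2\pi/3)$ cannot sum to $2\pi$; at most one reflex angle and hence at most one convex pentagon at a reflex vertex; at most three convex pentagons elsewhere) to obtain $C \le 2.6N$ up to boundary error. That counting is correct and matches the paper's $3(4/5)+1(1/5)=2.6$.

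The gap is the one you flagged yourself, and your proposed repair does not close it. When $N_2 = 0$ your two bounds read $C \ge 2.6N - O(\epsilon A(R))$ and $C \le 2.6N + O(\epsilon A(R))$, which is consistent rather than contradictory, and the claim that ``a positive fraction of large-angle vertices admits zero convex pentagons'' has no support in what you wrote: a Type 1 reflex angle can be barely larger than $\pi$, leaving room at that vertex for exactly one convex angle from $(\pi/2,2\pi/3)$ together with small non-reflex angles of other non-convex tiles, so every reflex vertex may host exactly one convex pentagon and no uniform $\delta$-improvement of the upper bound follows. The paper resolves the tension from the other side: it does not try to sharpen the combinatorial count but instead appeals to the \emph{strict} form of the ratio inequality (``the ratio of efficient to non-convex pentagons must be strictly greater than $2.6$,'' cf.\ Proposition \ref{ratio1} on the torus and the strengthened planar version, Proposition \ref{plane-ext-stronger-ratio1}). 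If you want to complete your version, the slack should be extracted from the lower bound rather than the vertex count --- for instance by quantifying how far the convex tiles' perimeters must exceed the regular pentagon's, or by establishing a strict planar ratio inequality as in Proposition \ref{plane-ext-stronger-ratio1} --- since as written your argument is incomplete at exactly the step you identified.
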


\begin{proof}
By the Truncation Lemma \ref{truncation-lemma}, we can find some $R$ large enough so that for tiles within a/ disk of radius $R$, the perimeter per tile is less than half the perimeter of a Prismatic pentagon. Note that the pentagons in $D_R$ will be a finite collection.

Assume, on the contrary, that there existed some large $R$ such that a tiling of $D_R$ by non-convex pentagons and efficient pentagons with angles strictly between $\pi/2$ and $2\pi/3$ had a perimeter ratio less than half the Prismatic pentagon's. By Proposition \ref{plane-ext-ratio1}, the ratio of the number of efficient pentagons to the number of non-convex pentagons must be greater than 2.6. Since all the angles of the efficient pentagons are strictly between $\pi/2$ and $2\pi/3$, there is at least one non-convex pentagon at each vertex. By definition, a non-convex pentagon has at least one angle greater than $\pi$. Thus at least $1/5$ of the vertices must contain an angle greater than $\pi$. At such vertices there is at most one efficient pentagon. At the remaining vertices, there are at most three efficient pentagons, because their angles are greater than $\pi/2$. At vertices near the boundary, where truncation occurs, these bounds may not hold, as some pentagons may be removed. However by the Truncation Lemma \ref{truncation-lemma} for any $\epsilon$ we can find some $R$ such that the number of pentagons being removed is $\epsilon A(R)$. Therefore for large $R$ the number of such pentagons is insignificant. Thus the ratio of efficient pentagons to non-convex pentagons is at most $3(4/5)+1(1/5) = 2.6$.  This is a contradiction of Proposition \ref{plane-ext-ratio1}, which says the ratio of efficient to non-convex pentagons must be strictly greater than 2.6.
\end{proof}

We extend Proposition \ref{four-angles-don't-tile} to the plane:

\begin{proposition}
\label{plane-four-angles-don't-tile}
Consider a unit-area tiling of a the plane by efficient pentagons and Type 2 non-convex pentagons. Assume that each efficient pentagon has at most four angles which tile with efficient pentagons. Then the tiling has a perimeter ratio greater than half the perimeter of a Prismatic pentagon.
\end{proposition}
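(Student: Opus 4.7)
The plan is to mimic the proof of Proposition~\ref{four-angles-don't-tile} for the torus case, replacing the global torus-wide counting argument with a truncation argument that works on arbitrarily large disks in the plane. I will argue by contradiction: assume there exists such a tiling with perimeter ratio at most half that of a Prismatic pentagon. Because each efficient pentagon has at most four angles that tile with efficient angles, none can be fully surrounded by efficient pentagons, so every efficient pentagon must share at least one vertex with some non-convex pentagon. Consequently, counting efficient pentagons amounts to counting how many can surround a non-convex pentagon.

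Repeating the local count from Proposition~\ref{four-angles-don't-tile}: at each of the three angles of a Type~2 non-convex pentagon that are less than $\pi$, Proposition~\ref{minmaxangle} permits at most four efficient pentagons; at each of the two angles greater than $\pi$, at most two efficient pentagons fit. Edge-to-edge adjacency forces five of these efficient pentagons to be double-counted across shared vertices, leaving at most $4(3)+2(2)-5 = 11$ efficient pentagons around each Type~2 non-convex pentagon.

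To transfer this bound to the plane, fix $\epsilon > 0$ and apply the Truncation Lemma~\ref{truncation-lemma} to choose $R$ large enough that (i) the perimeter-to-area ratio inside the disk $D(0,R)$ is within $\epsilon$ of $\rho$, and (ii) the number of tiles intersecting the boundary circle is at most $\epsilon A(R)$. For every efficient pentagon completely contained in $D(0,R)$, its required non-convex neighbor either also lies completely in $D(0,R)$ or is among the $O(\epsilon A(R))$ boundary-crossing tiles. Letting $C_1$ and $N_2$ denote the fractions of efficient and Type~2 non-convex pentagons completely inside $D(0,R)$, this local surrounding bound yields $C_1 \leq 11\, N_2 + O(\epsilon)$. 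On the other hand, since the tiling contains only efficient and Type~2 non-convex pentagons, Proposition~\ref{plane-ext-stronger-ratio1} provides the opposite estimate $C_1 > 13.4\, N_2$. Taking $\epsilon$ sufficiently small produces a contradiction.

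The main obstacle is controlling boundary effects. The local counting step is clean inside the disk, but efficient pentagons sitting near the boundary circle may have their accompanying non-convex neighbor truncated, which would spoil the $11\, N_2$ bound if not handled carefully. The Truncation Lemma supplies the right tool: since both the number of boundary-intersecting tiles and their contribution to the perimeter ratio go to zero in the appropriate sense, the asymptotic inequality $C_1 \leq 11\,N_2$ survives in the limit. Making this bookkeeping precise (in particular, showing that each ``exceptional'' efficient pentagon near the boundary can be charged to a distinct boundary crossing, so the exceptional count is genuinely $O(\epsilon A(R))$ rather than a multiplicative perturbation of the bound) is the only delicate part of the argument.
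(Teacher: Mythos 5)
Your proposal follows essentially the same route as the paper's proof: the same local count showing at most $4(3)+2(2)-5=11$ efficient pentagons can surround a Type~2 non-convex pentagon, the same use of the Truncation Lemma \ref{truncation-lemma} to control the $O(\epsilon A(R))$ boundary-crossing tiles, and the same contradiction with the bound $C_1 > 13.4\,N_2$ from Proposition \ref{plane-ext-stronger-ratio1}. The paper's own handling of the boundary error term is, if anything, less carefully stated than yours, so no changes are needed.
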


\begin{proof}
At the three angles of a non-convex pentagon which are less than $\pi$, there are at most four efficient pentagons. If there were five or more, then the angles of the efficient pentagon would be too small, in violation of Proposition \ref{minmaxangle}. By the same logic, there are at most two efficient pentagons at the two angles in the non-convex pentagon which is greater than $\pi$. Since the tiling is edge-to-edge, five of the efficient pentagons surrounding a non-convex pentagon appear at two vertices, and we must avoid double counting these. So the maximum number of efficient pentagons surrounding a non-convex pentagon in the tiling is $4(3) + 2(2) - 5 = 11$. 

Now given $\epsilon > 0$, by Lemma \ref{truncation-lemma} there is a large disk of radius $R$ such that the number of pentagons which intersect the disk but are not contained within it is at most $\epsilon A(R)$. Let $C_1$ and $N_2$ be the fraction of efficient and Type 2 non-convex pentagons with at least one vertex in the disk. Note that by hypothesis each efficient pentagon within shares a vertex with at least one non-convex pentagon, and by the conclusion of the first paragraph there are at most 11 efficient pentagons which surround each non-convex pentagon, so we might expect that $C_1 < 11N_2.$ However near the boundary of the disk we may have efficient pentagons within the disk which share a vertex with a non-convex pentagon which is not contained within the disk. There are at most $\epsilon A(r)$ non-convex pentagons of this type, so in fact we have 
$$
C_1 < 11N_2 - \epsilon N_2.
$$

Therefore $C_1 < 11N_2/(1 - \epsilon)$.
But if $\epsilon < 1 - 11/13.4$ and the perimeter ratio of the tilings is less than or equal to half the perimeter of a Prismatic pentagon this contradicts Proposition \ref{plane-ext-stronger-ratio1} as $C_1 \not> 13.4N_2$. Therefore the tiling has a perimeter ratio greater than half that of a Prismatic pentagon.
\end{proof}

%%%%%%%%%%%%%%%%%%%%%%%%%
%%%%%%%%%%%%%%%%%%%%%%%%% SECTION 3
%%%%%%%%%%%%%%%%%%%%%%%%%

\section{Projects for Future Study}

This section marks future projects which might be used to remove the convexity assumption in Theorem \ref{chungthm} and prove the main conjecture:

\begin{conjt}
\label{mainConjt}
Perimeter-minimizing planar tilings by unit-area polygons with at most five sides are given by Cairo and Prismatic tiles.
\end{conjt}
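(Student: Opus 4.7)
The plan is to extend the contradiction-by-counting framework of Theorems \ref{oneeffn&mTorus} and \ref{dihedraltype2n&mTorus} to arbitrary mixtures of triangles, quadrilaterals, and (convex or non-convex) pentagons. Suppose for contradiction that a unit-area tiling of a large flat torus has perimeter per tile strictly less than $P_1/2 = \sqrt{2+\sqrt{3}}$; by the truncation arguments of Section 5 (Lemma \ref{truncation-lemma} and Proposition \ref{plane-ext-ratio1}), establishing the contradiction on sufficiently large tori will suffice to rule out planar tilings as well. Since triangles have perimeter at least $3\sqrt{4/\sqrt{3}} \approx 4.559$, squares at least $4$, and non-convex quadrilaterals and Type 2 non-convex pentagons have some edge of length at least $\sqrt{2}$ by Lemmas \ref{quad-quadrilateral-max-edge} and \ref{ncvx-pent-max-edge}, every inefficient tile carries strictly more perimeter than a Cairo/Prismatic pentagon, and the strategy is to show that efficient pentagons cannot be packed densely enough around the inefficient tiles to meet the perimeter budget.

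First I would consolidate edge-length and perimeter lower bounds for each inefficient tile type, upgrading Proposition \ref{nonconvex-quad-not-surrounded} to the assertion that no inefficient tile with an edge exceeding $1.081$ can be surrounded by efficient pentagons, and unifying Propositions \ref{gendihedralSomeDeg4Verts} and \ref{type2dihedralSomeDeg4Verts} into a single statement that at high enough ratio $n/m$ some efficient vertex of degree four must appear. Second I would prove a master ratio inequality generalizing Proposition \ref{ratio1}, of the form
\[
C_1 \geq \sum_i \frac{P_i - P_1}{P_1 - P_0}\, X_i,
\]
where $X_i$ ranges over all inefficient tile types with perimeter lower bound $P_i$. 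Third, using Propositions \ref{minmaxangle}, \ref{efficient-pentagon-one-small-angle}, \ref{four-angles-don't-tile}, and \ref{pminimizing5tileDeg4Vert}, I would run the argument at the heart of Theorems \ref{oneeffn&mTorus} and \ref{dihedraltype2n&mTorus}: force exactly one small angle $s \leq \pi/2$ and one paired angle $a$ satisfying $3s + a = 2\pi$ at each degree-four efficient vertex, then count appearances of $a$ (which are restricted to degree-four efficient and inefficient vertices by Proposition \ref{quadCan'tTileDeg3}) to obtain an upper bound on $n/m$ that contradicts the master inequality. Finally I would transfer to the plane by the Truncation Lemma, following the template of Proposition \ref{plane-four-angles-don't-tile}.

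The main obstacle will be Type 1 non-convex pentagons without a long edge. By Proposition \ref{3typesnonconvex} a Type 1 pentagon only needs an edge of length at least $4/5$, well below the $\sqrt{2}$ threshold that drives every pairing bound in the paper; such a tile could in principle be completely surrounded by efficient pentagons, so the count of inefficient vertices per inefficient tile collapses and the degree-four-vertex lemma degrades. A plausible remedy is to split Type 1 pentagons into a short-edge subfamily, for which a dedicated isoperimetric bound strictly above $4$ would raise the coefficient in the master inequality enough to compensate for the lost pairing, and a long-edge subfamily that falls under the present machinery. Simultaneously accommodating mixtures containing non-efficient convex pentagons will require sharpening Proposition \ref{type1-cvx-best} into a continuous family of angle-dependent bounds and combining it with a more aggressive use of Proposition \ref{four-angles-don't-tile}-style angle restrictions to ensure the number of angles available to surround each inefficient tile stays small. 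These steps appear technical but attainable, and together with the torus-to-plane limit would complete a proof of Conjecture \ref{mainConjt}.
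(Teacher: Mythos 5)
This statement is a conjecture, not a theorem: the paper explicitly leaves it open, and Section 6 lists the very cases your plan must handle as unsolved projects. Your proposal is a research program rather than a proof, and the gap it leaves open is exactly the one that stops the paper's method. The counting framework of Theorems \ref{oneeffn&mTorus} and \ref{dihedraltype2n&mTorus} derives a contradiction by squeezing the ratio $n/m$ between a large isoperimetric lower bound (Propositions \ref{generalQuadRatio}, \ref{type2-ratio}, on the order of $30$) and an upper bound of roughly $10$--$16$ from counting efficient pentagons around each inefficient tile. Both bounds depend on every inefficient tile having perimeter well above $4$ or an edge of length at least $\sqrt{2}$, which forces inefficient tiles to pair up and keeps the number of inefficient vertices small. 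Type 1 non-convex pentagons destroy both halves at once: a unit square with one edge dented slightly inward is a Type 1 pentagon with perimeter arbitrarily close to $4$ and all edges of length at most about $1$, so it can in principle be surrounded entirely by efficient pentagons and its coefficient in the master ratio inequality is only $(P_2-P_1)/(P_1-P_0)\approx 2.6$. Since $2.6$ is far below the surrounding-count upper bound of roughly $13$, no contradiction arises. Your proposed remedy---a ``dedicated isoperimetric bound strictly above $4$'' for the short-edge subfamily---cannot work as stated, because the infimum of perimeter over that subfamily \emph{is} $4$; the bound you need does not exist, and some genuinely new idea is required.

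The same problem recurs with non-efficient convex pentagons and convex quadrilaterals mixed into the tiling: a convex pentagon with perimeter just above the Cairo value, or a near-square quadrilateral, contributes almost nothing to the ratio inequality yet can sit at efficient-looking vertices, and ``sharpening Proposition \ref{type1-cvx-best} into a continuous family of angle-dependent bounds'' is a hope rather than an argument. The parts of your plan that do go through (triangles, non-convex quadrilaterals, Type 2 non-convex pentagons, and the torus-to-plane transfer via Lemma \ref{truncation-lemma}) are essentially the content the paper already proves; the parts that remain are precisely why the statement is still labelled a conjecture.
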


\begin{proj}
\emph{Extend the results from Section 5 and 6 on flat tori to similar results on the plane, considering methods from Morgan \cite{morggeo} and Chung et al. \cite{pen11}. See Section 7 for partial progress.}
\end{proj}

The following may be a more accessible version of Conjecture \ref{mainConjt}:

\begin{projs}
\emph{(1) Prove \ref{mainConjt} on flat tori of area $n$ for small $n$, known for $n = 4$ (\cite[Prop. 3.9]{tile11}) and some tori of area 2 (\cite[Prop. 3.3]{tile11}). 
\newline
(2) Prove \ref{mainConjt} for dihedral tilings (the case with Type 1 non-convex pentagons remains open). 
\newline
(3) Prove \ref{mainConjt} for tilings of the plane by convex pentagons and Type 2 pentagons.}
\end{projs}

The following may be helpful in proving some of the above or \ref{mainConjt}:

\begin{projs}
\emph{(1) Prove that if a tiling by convex and non-convex pentagons has less perimeter than Cairo tilings, then the convex pentagons must have at least one angle of $90^\circ$ or $120^\circ$. 
\newline
(2) Without using Theorem \ref{chungthm}, show a tiling of a large flat torus by unit-area convex pentagons and unit squares must have more perimeter per tile then one half the Prismatic pentagon's perimeter. 
\newline
(3) Without using Theorem \ref{chungthm}, show a tiling of a large flat torus by unit-area convex pentagons and unit-area equilateral triangles must have more perimeter per tile then one half the Prismatic pentagon's perimeter.}
\end{projs}

\appendix

\section{Perimeter-minimizing Equilateral Pentagonal Tilings}

Theorem \ref{equilateralPentagonMin} provides the perimeter-minimizing monohedral tiling of the plane by equilateral pentagons. We begin with a description of equilateral pentagonal convex tiles given by Hirschhorn and Hunt.
 
\begin{theorem}
\label{hirsch-hunt-thm}
\cite{hirsch&hunt} An equilateral convex pentagon tiles the plane if and only if it has two angles adding to $\pi$, as in Figure \ref{fig:EquilCvxPentCases}, or it is the unique equilateral convex pentagon $X$ of Figure \ref{fig:spCvxPentX} with angles $A, B, C, D, E$ satisfying $A+2B = 2\pi,$ $C+2E= 2\pi,$ $A+C+2D = 2\pi$. ($A \approx 70.88^\circ, B \approx 144.56^\circ, C \approx 89.26^\circ, D \approx 99.93^\circ, E \approx 135.37^\circ.$)
\end{theorem}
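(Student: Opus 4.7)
The plan is to treat the two directions of the biconditional separately.

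For sufficiency, I would construct explicit tilings. Given an equilateral convex pentagon with $A_i+A_j=\pi$, if $A_i$ and $A_j$ are adjacent I would reflect the pentagon across the edge between them; this produces an equilateral hexagon with a straight angle at the reflection point, i.e.\ a parallelohexagon with equilateral sides, which tiles by translation by a classical theorem. When $A_i$ and $A_j$ are non-adjacent, a similar pairing using reflection or a half-turn along an appropriate diagonal yields a tileable fundamental block. For the special pentagon $X$, the three prescribed relations $A+2B=2\pi$, $C+2E=2\pi$, $A+C+2D=2\pi$ already spell out three vertex figures; I would assemble a patch using these vertex stars and verify that the translation group generated by two independent vectors extends it to a tiling of the plane.

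For necessity, the approach is a combinatorial classification of vertex configurations. Let the angles be $A_1,\dots,A_5\in(0,\pi)$ with $\sum A_i=3\pi$. At every vertex some nonnegative integer equation $\sum m_k A_k=2\pi$ must hold with $\sum m_k\in\{3,4,5\}$, since each $A_k<\pi$ by convexity and each $A_k>0$. Because the pentagon is equilateral, any edge can be glued to any other edge, so the angle combinatorics at vertices is the decisive condition. I would enumerate all admissible multisets of angles summing to $2\pi$, and then impose the global compatibility requirement that every $A_i$ actually appears in at least one vertex of the tiling, together with the local requirement that the angles used around any one tile are consistent as one walks around its boundary.

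The main obstacle is extracting, from the resulting linear system on $(A_1,\dots,A_5)$, the conclusion that either some pair satisfies $A_i+A_j=\pi$ (placing the pentagon in the generic family) or the three linear relations defining $X$ must hold. I expect a painstaking case split organized by the role each angle plays, depending on whether it appears at $3$-, $4$-, or $5$-valent vertices and with what multiplicity; for instance an angle close to $\pi$ can meet only two small angles at a $3$-valent vertex, while an angle appearing with multiplicity four at a $5$-valent vertex is forced near $2\pi/5$. Propagating the edge-matching constraints around a single pentagon to its neighbors should further narrow the possibilities. Confirming that the entire web of constraints is solved only by the two advertised families is the combinatorial heart of the Hirschhorn--Hunt classification.
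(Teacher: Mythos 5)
The paper does not prove this statement; it is quoted from Hirschhorn and Hunt \cite{hirsch&hunt}, so there is no internal argument to compare yours against. Judged on its own, your proposal is an outline rather than a proof, and it has concrete problems in both directions. In the sufficiency direction, reflecting the pentagon in the edge joining the two vertices with angles $A_i$ and $A_j$ does not produce straight angles: a mirror reflection across that edge fixes each endpoint and doubles its angle to $2A_i$ and $2A_j$, which are straight only if both angles are right. The operation you want is the half-turn about the midpoint of that edge, which interchanges the two endpoints so that the tile and its image contribute $A_i+A_j=\pi$ at each of them, yielding a centrally symmetric equilateral hexagon that tiles by translation. You also never address why the three relations defining $X$, together with equilaterality and $A+B+C+D+E=3\pi$, are consistent and determine a \emph{unique} convex pentagon; that existence-and-uniqueness claim is part of the statement and requires a (nonlinear) argument, since equilaterality is a metric condition, not a linear one on the angles.

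The necessity direction is where the theorem actually lives, and your proposal defers it entirely: you describe the shape of the case analysis but explicitly leave ``the combinatorial heart'' unverified, so nothing is established. Two specific points would undermine even a completed version of the plan as written. First, you restrict vertex equations to $\sum m_k A_k = 2\pi$ with $\sum m_k \in \{3,4,5\}$; a tiling by congruent equilateral pentagons need not be edge-to-edge, and a vertex of one tile may lie in the interior of an edge of another, producing angle sums equal to $\pi$. Such straight vertices must be ruled out or incorporated before the enumeration is exhaustive. Second, the claim that ``any edge can be glued to any other edge, so the angle combinatorics at vertices is the decisive condition'' is too quick: a list of admissible vertex multisets only gives necessary linear conditions, and one must still show that every tiling forces one of the advertised relations globally --- that is, propagate the vertex constraints consistently around each tile and across adjacent tiles. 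Without carrying out that enumeration and propagation, the proposal is a research plan for the Hirschhorn--Hunt classification, not a proof of it.
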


\begin{figure}
\centering
\includegraphics[scale=.4]{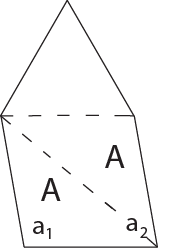}
\includegraphics[scale=.5]{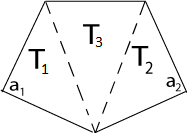}
\caption{There are two cases when an equilateral pentagon has two angles, $a_1$ and $a_2$, summing to $\pi$.}
\label{fig:EquilCvxPentCases}
\end{figure}

We now turn our attention to perimeter-minimizing equilateral pentagons which tile the plane monohedrally, and provide the perimeter-minimizing pentagon of this type. If we remove the monohedral assumption or allow for the addition of non-convex tiles we might be able to do better -- this question remains open . 

\begin{theorem}
\label{equilateralPentagonMin}
The perimeter-minimizing unit-area equilateral convex pentagon $P$ which tiles the plane monohedrally is circumscribed about a sphere and has two non-adjacent $\pi/2$ angles, two adjacent angles measuring $\pi/4 + \arccos(1/2\sqrt{2}) \approx 1.995$ and one angle measuring $3\pi/4 - 2\arccos(1/2\sqrt{2}) \approx 2.294.$ $P$ has perimeter approximately 3.879.
\end{theorem}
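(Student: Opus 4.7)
The plan is to apply the Hirschhorn--Hunt classification (Theorem~\ref{hirsch-hunt-thm}) to reduce the minimization to a finite enumeration of low-parameter families of equilateral pentagons, then optimize each family and compare. By Theorem~\ref{hirsch-hunt-thm}, an equilateral convex pentagon tiles the plane monohedrally iff (i) two of its interior angles sum to $\pi$, or (ii) it is the specific pentagon $X$ of Figure~\ref{fig:spCvxPentX}. For case (ii) the angles of $X$ are given explicitly, so the equilateral closing condition determines $X$ up to similarity; a direct trigonometric computation gives its unit-area perimeter and one checks that this value exceeds $3.879$.

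For case (i) I would split further, as in Figure~\ref{fig:EquilCvxPentCases}, into the adjacent and the non-adjacent configurations. In each configuration, write the pentagon as a closed chain of five unit-length vectors whose directions turn at each vertex by the corresponding exterior angle. The closing condition is two scalar equations in the angles; combined with the sum-to-$3\pi$ condition and the ``two-sum-to-$\pi$'' constraint, this typically leaves a one-parameter family of shapes, on which I would express the unit-area perimeter as a function of the parameter and minimize by calculus.

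I expect the global minimizer to lie in the non-adjacent sub-case and to be symmetric under the reflection that swaps the two $\pi$-summing angles, so that its angle sequence takes the form $(\alpha, \pi/2, \gamma, \pi/2, \alpha)$ with $2\alpha + \gamma = 2\pi$. Placing the symmetry-fixed vertex at the origin with the axis of symmetry vertical and propagating the edge directions through the prescribed exterior-angle turns, the equilateral closing condition reduces to the single trigonometric equation $2|\cos(\gamma/2) - \sin(\gamma/2)| = 1$, which in the convex range pins down $\gamma = 3\pi/2 - 2\arccos(1/(2\sqrt{2}))$ and $\alpha = \pi/4 + \arccos(1/(2\sqrt{2}))$, matching the statement. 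Scaling the resulting shape to unit area then gives the perimeter $\approx 3.879$.

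The main obstacle is justifying the symmetry assumption in the non-adjacent sub-case: a priori, the one-parameter family of equilateral pentagons with two non-adjacent angles summing to $\pi$ need not have a reflective axis. One natural route is a Steiner-style reflection argument, replacing an asymmetric candidate with a symmetric one of the same area, same angle multiset, and no larger perimeter; an alternative is to parameterize the non-symmetric family explicitly and verify by Lagrange multipliers (or a direct calculus/Mathematica check) that the symmetric point is the unique minimum. Once this step is in hand, comparison with the adjacent sub-case (which a similar computation shows yields a larger perimeter) and with pentagon $X$ from case (ii) identifies the described pentagon as the global perimeter-minimizer.
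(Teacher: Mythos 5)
Your outline is essentially the paper's proof: both invoke Theorem \ref{hirsch-hunt-thm} to reduce to three cases (adjacent $\pi$-summing angles, non-adjacent $\pi$-summing angles, and the special pentagon $X$), optimize each one-parameter family at fixed side length, and compare; your closing condition $2|\cos(\gamma/2)-\sin(\gamma/2)|=1$ is exactly satisfied by $\gamma=3\pi/2-2\arccos(1/(2\sqrt{2}))$, which incidentally confirms that the ``$3\pi/4$'' in the theorem statement is a typo for $3\pi/2$. The only real divergence is at the step you correctly flag as the main obstacle, and there I would steer you away from the Steiner-style reflection: symmetrizing a pentagon across an axis does not preserve equilaterality (nor, in general, the number of sides), so that route would need substantial repair. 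The paper instead carries out your second suggested route concretely, but with a decomposition rather than a chain-of-unit-vectors parameterization: cut the non-adjacent pentagon along the two diagonals from the vertex lying between the two $\pi$-summing angles $a_1$ and $a_2=\pi-a_1$, obtaining isosceles triangles $T_1,T_2$ with apex angles $a_1,a_2$ and legs $x$, plus a middle triangle $T_3$ with sides $2x\sin(a_1/2)$, $2x\cos(a_1/2)$, $x$. Then $|T_1|+|T_2|=x^2\sin a_1$ is maximized at $a_1=\pi/2$, and Heron's formula (plus a one-variable derivative check) shows $|T_3|$ is maximized at the same point; since area at fixed side length is maximized precisely at $a_1=a_2=\pi/2$, the unit-area perimeter $5x$ is minimized there, which is the justification your plan defers. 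The remaining cases go through as you expect (the adjacent family bottoms out near $4.177$ at $a_1=\pi/2$, and for $X$ the paper simply lower-bounds the perimeter by $3.994$ via Lemma \ref{cot-perimeter-lemma} rather than computing it exactly), so once you replace the symmetrization idea with the explicit one-parameter optimization, your argument coincides with the paper's.
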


\begin{proof}
We first show that such a perimeter-minimizing tiling exists. Consider a sequence of such tilings $T_n$ with perimeter per tile approaching the infimum. By Definition \ref{def:CairoPrismaticEfficient}, we may assume that the pentagons are convex, as $P$ has perimeter less than four. By standard compactness, the desired limit exists.

By Theorem \ref{hirsch-hunt-thm} an equilateral convex pentagon which tiles the plane must be one of three types: a pentagon with two adjacent angles summing $\pi$, a pentagon with two non-adjacent angles summing to $\pi$, or the special pentagon $X$.

First consider a unit-area equilateral convex pentagon where the two angles, $a_1$ and $a_2$, which sum to $\pi$ are adjacent, and the side length is denoted $x_1$. 
Note that pentagons of this type are formed by a parallelogram that can be divided into two congruent triangles, each denoted $A$, and an equilateral triangle, as in Figure \ref{fig:EquilCvxPentCases}.  
$A$ is isosceles, with two sides of length $x_1$; therefore $A$ has a base of length $2x_1\sin (a_1/2)$ and a height of $x_1\cos (a_1/2)$. 
Because the pentagon has unit area, we can express $x_1$ in terms of $a_1$ by solving the equation 
$$
1 = x_1^2 \left( 2\sin(a_1/2)\cos(a_1/2) + \sqrt{3}/4 \right),
$$
to get
$$
x_1^2 = \frac{1}{\sin a_1+\sqrt{3}/4}.
$$
Note that $x_1$ is minimized when $a_1 = \pi/2$.

\begin{figure}
\centering
\includegraphics[scale=.5]{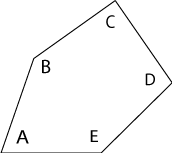}
\caption{The special equilateral pentagonal tile $X$ of Theorem \ref{hirsch-hunt-thm}}
\label{fig:spCvxPentX}
\end{figure}

The perimeter is clearly just $5x_1$, so the perimeter is minimized when $a_1 = \pi/2$. 
Solving the above equation to determine this value yields $x_1 \approx .8353$, so the perimeter of $P_1$ is approximately 4.177.
\newline

Second consider an equilateral convex pentagon where the two angles, $a_1$ and $a_2$, are non-adjacent. Let $x_2$ be the side-length of the perimeter-minimizing pentagon, $P_2$, of this type. 
This pentagon can be broken into three triangles, as in Figure \ref{fig:EquilCvxPentCases}: two of which have two sides of length $x_2$, denoted $T_1$ and $T_2$, and one of which has one side of length $x_2$, denoted $T_3$. 
We first show that $a_1 = a_2 = \pi/2$ maximizes the area of $T_3$.

Let $A_i$ be the side of $T_i$ opposite angle $a_i$. 
Since the $T_i$ are isosceles triangles, we can write $\sin (a_1/2) = A_1/2x_2$ and $\sin(a_2/2) = A_2/2x_2$. Substituting $a_2 = \pi-a_1$ we get that 
$$
\frac{A_2}{2x_2} = \sin{\frac{\pi-a_1}{2}} = \cos \frac{a_1}{2} = \frac{h}{x_2},
$$
where $h$ is the height of $T_1$. The Pythagorean Theorem implies $h = \sqrt{x_2^2 - (A_1^2/4)}$; therefore
$$
A_2 = 2\sqrt{x^2 - (A_1^2/4)}.
$$ 
The $T_3$ triangle has sides $A_1, A_2, x_2$ and by Heron's formula the area is
$$
|T_3| = \sqrt{p(p-A_1)(p-A_2)(p-x_2)},
$$
where $p$ is half the perimeter of $T_3$. Since $x_2$ is given and we can substitute for $A_2$, we use Mathematica to take the derivative of $|T_3|$ with respect to $A_1$ and get that the area is maximized when $x_2 = A_1/\sqrt{2}$. This is the case precisely when $a_1 = a_2 = \pi/2$, as desired.

We now show that $P_2$ must have $a_1 = a_2 = \pi/2$. Assume $a_1 \not= \pi/2$ in $P_2$. Setting $a_1 = a_2 = \pi/2$ increases the area of $T_1$ and $T_2$, and we can adjust the angle between $T_1$ and $T_2$ to ensure that $T_3$ still has a side of length $x_2$ (i.e. that the pentagon is still equilateral). Since this maximizes the area of $T_3$, the overall area of $P_2$ increases while the perimeter stays the same. Then scaling back down to unit-area by just shrinking the pentagon decreases the perimeter, a contradiction that $P_2$ is perimeter-minimizing. So $a_1 = a_2 = \pi/2$ in this case as well.

The area of $P_2$ is given by 
$$
1 = 2T_1 + T_2 = x_2^2 + \frac{\sqrt{7}x_2^2}{4},
$$ 
which implies that $x_2 \approx .7758$ and that the perimeter is about $3.879$.
\newline

For the final case, the special pentagon $X$ pictured in Figure \ref{fig:spCvxPentX}, we note that by Lemma \ref{cot-perimeter-lemma} the perimeter of $P_3$ is greater than 3.994.
Comparing the three cases, we find that $P_2$ has the lowest perimeter.

By definition $P_2$ had two non-adjacent $\pi/2$ angles. $T_1$ and $T_2$ each have two angles of $\pi/4$. Therefore $T_3$ has two angles which measure $\arccos(1/2\sqrt{2})$ and one which measures $\pi - 2\arccos(1/2\sqrt{2})$. By construction $P_2$ has two adjacent angles measuring $\pi/4 + \arccos(1/2\sqrt{2})$ and one angle measuring $3\pi/4 - 2\arccos(1/2\sqrt{2})$, as desired.
\end{proof}

%%%%%%%%
%%%%%%%% BIBLIO
%%%%%%%

\bibliographystyle{abbrv}
\bibliography{main}
%references

\bigskip
Whan Ghang
\newline 
Department of Mathematics
\newline 
Massachusetts Institute of Technology
\newline
ghangh@MIT.edu
\newline
\newline
Zane Martin
\newline 
Department of Mathematics and Statistics 
\newline 
Williams College 
\newline
zkm1@williams.edu
\newline
\newline
Steven Waruhiu
\newline 
Department of Mathematics
\newline 
University of Chicago
\newline
waruhius@uchicago.edu

\end{document}